\documentclass{article}

\usepackage{amsmath,amssymb,enumerate,comment}
\usepackage{amsthm,cancel}
\usepackage[usenames,dvipsnames]{color}
\usepackage{natbib}

\usepackage[margin=1.25in]{geometry}

\usepackage{graphicx} 
\usepackage{subfigure} 
\usepackage{multirow}
\usepackage{xcolor}
\usepackage{mathtools}
\usepackage{relsize}
\usepackage{url}
\usepackage{nicefrac}
\usepackage{xspace}
\usepackage{algorithm}
\usepackage{algorithmic}

\newtheorem{lemma}{Lemma}
\newtheorem{theorem}{Theorem}
\newtheorem{corollary}{Corollary}

\newtheorem{definition}{Definition}

\newtheorem*{lemma*}{Lemma}
\newtheorem*{theorem*}{Theorem}
\newtheorem*{corollary*}{Corollary}
\newtheorem*{remark*}{Remark}
\newtheorem*{definition*}{Definition}

\newcommand{\sgd}{\textsc{Sgd}\xspace}
\newcommand{\sdca}{\textsc{Sdca}}

\newcommand{\svrg}{\textsc{Svrg}\xspace}
\newcommand{\msvrg}{\textsc{Msvrg}}
\newcommand{\gd}{\textsc{GradientDescent}\xspace}
\newcommand{\sml}[1]{{\small #1}}
\newcommand{\fromto}[3]{\sml{$#1{\;\le\;}#2{\;\le\;}#3$}}

\newcommand{\reals}{\mathbb{R}}

\newcommand{\nlsum}{\sum\nolimits}
\newcommand{\E}{\mathbb{E}}
\newcommand{\Fc}{\mathcal{F}}

\renewcommand{\cite}[1]{\citep{#1}}
\setcitestyle{authoryear,round,citesep={;},aysep={,},yysep={;}}

%
%

\begin{document}

\title{Stochastic Variance Reduction for Nonconvex Optimization}

\author{\\
Sashank J. Reddi\\
\texttt{sjakkamr@cs.cmu.edu}\\
Carnegie Mellon University \\
\and \\
Ahmed Hefny\\
\texttt{ahefny@cs.cmu.edu}\\
Carnegie Mellon University \\
\and \\
Suvrit Sra\\
\texttt{suvrit@mit.edu} \\
Massachusetts Institute of Technology \\
\and \\
Barnab\'{a}s P\'{o}cz\'os\\
\texttt{bapoczos@cs.cmu.edu} \\
Carnegie Mellon University \\
\and \\
Alex Smola \\
\texttt{alex@smola.org} \\ 
Carnegie Mellon University \\
}

%

%
%
%
%
%
\date{Original circulated date: 5th February, 2016.}
\maketitle

\begin{abstract}
  We study nonconvex finite-sum problems and analyze stochastic variance reduced gradient (\svrg) methods for them. \svrg and related methods have recently surged into prominence for convex optimization given their edge over stochastic gradient descent (\sgd); but their theoretical analysis almost exclusively assumes convexity. In contrast, we prove non-asymptotic rates of convergence (to stationary points) of \svrg for nonconvex optimization, and show that it is provably faster than \sgd and gradient descent. We also analyze a subclass of nonconvex problems on which \svrg attains linear convergence to the \emph{global} optimum. We extend our analysis to mini-batch variants of \svrg, showing (theoretical) linear speedup due to mini-batching in parallel settings. 
\end{abstract} 

\section{Introduction}
\label{sec:intro}
We study nonconvex \emph{finite-sum} problems of the form
\begin{equation}
  \label{eq:1}
  \min_{x\in \reals^d}\ f(x) := \frac{1}{n}\sum_{i=1}^n f_i(x),
\end{equation}
where neither $f$ nor the individual $f_i$ ($i \in [n]$) are necessarily convex; just Lipschitz smooth (i.e., Lipschitz continuous gradients). We use $\mathcal{F}_n$ to denote all functions of the form~\eqref{eq:1}. We optimize such functions in the Incremental First-order Oracle (IFO) framework~\citep{agarwal2014} defined below.
\begin{definition}
  For $f \in \mathcal{F}_n$, an IFO takes an index $i \in [n]$ and a point $x \in \mathbb{R}^d$, and returns the pair $(f_i(x),\nabla f_i(x))$. 
\end{definition}
IFO based complexity analysis was introduced to study lower bounds for finite-sum problems. Algorithms that use IFOs are favored in large-scale applications as they require only a small amount first-order information at each iteration. Two fundamental models in machine learning that profit from IFO algorithms are (i) empirical risk minimization, which typically uses convex finite-sum models; and (ii) deep learning, which uses nonconvex ones. 

The prototypical IFO algorithm, stochastic gradient descent (\sgd)\footnote{We use  `incremental gradient' and `stochastic gradient' interchangeably, though we are only interested in finite-sum problems.} has witnessed tremendous progress in the recent years. By now a variety of accelerated, parallel, and faster converging versions are known. Among these, of particular importance are variance reduced (VR) stochastic methods~\citep{Schmidt13,Johnson13,Defazio14}, which have delivered exciting progress such as linear convergence rates (for strongly convex functions) as opposed to sublinear rates of ordinary \sgd~\citep{RobMon51,nemirov09}. Similar (but not same) benefits of VR methods can also be seen in smooth convex functions. The \svrg algorithm of~\citep{Johnson13} is particularly attractive here because of its low storage requirement in comparison to the algorithms in~\citep{Schmidt13,Defazio14}. 

Despite the meteoric rise of VR methods, their analysis for general nonconvex problems is largely missing. \citet{Johnson13} remark on convergence of \svrg when $f \in \mathcal{F}_n$ is locally strongly convex and provide compelling experimental results (Fig.~4 in~\citep{Johnson13}). However, problems encountered in practice are typically not even locally convex, let alone strongly convex. The current analysis of \svrg does not extend to nonconvex functions as it relies heavily on convexity for controlling the variance. Given the dominance of stochastic gradient methods in optimizing deep neural nets and other large nonconvex models, theoretical investigation of faster nonconvex stochastic methods is much needed.

\begin{table*}[t]\small
\label{tab:algo-comparison}
\begin{center}
\resizebox{\textwidth}{!}{
\bgroup
\def\arraystretch{1.5}
\begin{tabular}{|l|c|c|c|c|}
\hline
Algorithm & Nonconvex & Convex & Gradient Dominated & Fixed Step Size?\\
\hline
$\sgd$    & $O\left(1/\epsilon^2\right)$ & $O\left(1/\epsilon^2\right)$ & $O\left(1/\epsilon^2\right)$ & $\times$ \\
$\gd$    & $O\left(n/\epsilon\right)$ & $O\left(n/\epsilon\right)$ & $O\left(n \tau \log(1/\epsilon)\right)$& $\surd$ \\
$\svrg$ & $\color{red} O\bigl(n + (n^{2/3}/\epsilon)\bigr)$ & $\color{red} O\bigl(n + (\sqrt{n}/\epsilon)\bigr)$ & $\color{red} O\bigl((n + n^{2/3} \tau)\log(1/\epsilon)\bigr)$ & $\surd$\\
$\msvrg$ & $\color{red} O\bigl(\min\big\lbrace 1/\epsilon^2, n^{2/3}/\epsilon\bigr\rbrace\bigr)$ & $\color{red} O\bigl(\min\bigl\lbrace 1/\epsilon^2, \sqrt{n}/\epsilon\bigr\rbrace\bigr)$ & $-$ & $\times$\\
\hline
\end{tabular}
\egroup
}
\end{center}
\caption{\small Table comparing the IFO complexity of different algorithms discussed in the paper. The complexity is measured in terms of the number of oracle calls required to achieve an $\epsilon$-accurate solution (see Definition~\ref{def:eps-accurate}). Here, by fixed step size, we mean that the step size of the algorithm is fixed and does not dependent on $\epsilon$ (or alternatively $T$, the total number of iterations). The complexity of gradient dominated functions refers to the number of IFO calls required to obtain $\epsilon$-accurate solution for a $\tau$-gradient dominated function (see Section~\ref{sec:background} for the definition). For $\sgd$, we are not aware of any specific results for gradient dominated functions. Also, $[f(x^0) - f(x^*)]$ and $\|x^0 - x^*\|$ (where $x^0$ is the initial point and $x^*$ is an optimal solution to~\eqref{eq:1}) are assumed to be constant for a clean comparison. The results marked in red are the contributions of this paper.}
\end{table*}

Convex VR methods are known to enjoy the faster convergence rate of \gd but with a much weaker dependence on $n$,  without compromising the rate like \sgd. However, it is not clear if these benefits carry beyond convex problems, prompting the central question of this paper:
\begin{quote}
  \it 
  For nonconvex functions in $\Fc_n$, can one achieve convergence rates faster than both \sgd and \gd using an IFO? If so, then how does the rate depend on $n$ and on the number of iterations performed by the algorithm? 
\end{quote}
Perhaps surprisingly, we provide an affirmative answer to this question by showing that a careful selection of parameters in $\svrg$ leads to faster convergence than both $\sgd$ and $\gd$. To our knowledge, ours is the \emph{first} work to improve convergence rates of \sgd and \gd for IFO-based nonconvex optimization. 

\textbf{Main Contributions.} We summarize our main contributions below and also list the key results in Table~\ref{tab:algo-comparison}.
\begin{list}{$\bullet$}{\leftmargin=1em}
\setlength{\itemsep}{-2pt}
\item We analyze nonconvex stochastic variance reduced gradient (\svrg), and prove  that it has faster rates of convergence than $\gd$ and ordinary \sgd. We show that $\svrg$ is faster than $\gd$ by a factor of $n^{1/3}$ (see Table~\ref{tab:algo-comparison}).
\item We provide new theoretical insights into the interplay between step-size, iteration complexity and convergence of nonconvex \svrg (see Corollary~\ref{cor:svrg-nonconvex-oracle-gen}).
\item For an interesting nonconvex subclass of $\Fc_n$ called gradient dominated functions \cite{Polyak1963,nesterov2006}, we propose a variant of \svrg that attains a \emph{global} linear rate of convergence. We improve upon many prior results for this subclass of functions (see Section~\ref{sec:grad-dom}). To the best of our knowledge, ours is the first work that shows a stochastic method with linear convergence for gradient dominated functions.
\item We analyze mini-batch nonconvex $\svrg$ and show that it provably benefits from mini-batching. Specifically, we show theoretical \emph{linear} speedups in parallel settings for large mini-batch sizes. By using a mini-batch of size $b$ $(< n^{2/3})$, we show that mini-batch nonconvex $\svrg$ is faster by a factor of $b$ (Theorem~\ref{thm:nonconvex-minibatch}). We are not aware of any prior work on mini-batch first-order stochastic methods that shows linear speedup in parallel settings for nonconvex optimization.
\item Our analysis yields as a byproduct a direct convergence analysis for $\svrg$ for  smooth convex functions (Section~\ref{sec:convex}).
\item We examine a variant of $\svrg$ (called $\msvrg$) that has faster rates than both $\gd$ and $\sgd$.
\end{list}

\subsection{Related Work}
\label{sec:relatedwork}

\textbf{Convex.} \citet{bertsekas11.survey} surveys several incremental gradient methods for convex problems. A key reference for stochastic convex optimization (for $\min \E_z[F(x,z)]$) is~\citep{nemirov09}. Faster rates of convergence are attained for problems in $\mathcal{F}_n$ by VR methods, see e.g.,~\citep{Defazio14,Johnson13,Schmidt13,Konecny15,sdca,defazio2014finito}.  Asynchronous VR frameworks are developed in~\citep{Reddi2015}. \citet{agarwal2014,lan2015} study lower-bounds for convex finite-sum problems.
\citet{Shwartz15} prove linear convergence of stochastic dual coordinate ascent when the individual $f_i$ ($i\in[n]$) are nonconvex but $f$ is strongly convex. They do not study the general nonconvex case. Moreover, even in their special setting our results improve upon theirs for the high condition number regime.

\textbf{Nonconvex.} \sgd dates at least to the seminal work~\citep{RobMon51}; and since then it has been developed in several directions~\citep{poljak1973,ljung1977,bot91,kushner2012}. In the (nonsmooth) finite-sum setting, \citet{Sra2012} considers proximal splitting methods, and analyzes asymptotic convergence with nonvanishing gradient errors. \citet{hong2014} studies a distributed nonconvex incremental ADMM algorithm. 

These works, however, only prove expected convergence to stationary points and often lack analysis of rates. The first nonasymptotic convergence rate analysis for \sgd is in~\citep{Ghadimi13}, who show that \sgd ensures $\|\nabla f\|^2 \le \epsilon$ in $O(1/\epsilon^2)$ iterations. A similar rate for parallel and distributed \sgd was shown recently in~\citep{Lian2015}. \gd is known to ensure $\|\nabla f\|^2\le \epsilon$ in $O(1/\epsilon)$ iterations~\citep[Chap.~1.2.3]{nesterov03}.

The first analysis of nonconvex \svrg seems to be due to~\citet{shamir2014stochastic}, who considers the special problem of computing a few leading eigenvectors (e.g., for PCA); see also the follow up work~\citep{shamir2015fast}. Finally, we note another interesting example, stochastic optimization of locally quasi-convex functions~\citep{hazan2015}, wherein actually a $O(1/\epsilon^2)$ convergence in function value is shown.  
\section{Background \& Problem Setup}
\label{sec:background}
We say $f$ is $L$-\emph{smooth} if there is a constant $L$ such that
\begin{equation*}
  \|\nabla f(x)-\nabla f(y)\| \le L\|x-y\|,\quad\forall\ x, y \in \reals^d.
\end{equation*}
Throughout, we assume that the functions $f_i$ in~\eqref{eq:1} are $L$-smooth, so that $\|\nabla f_i(x)-\nabla f_i(y)\| \le L\|x-y\|$ for all $i \in [n]$. Such an assumption is very common in the analysis of first-order methods. Here the Lipschitz constant $L$ is assumed to be independent of $n$. A function $f$ is called $\lambda$-\emph{strongly convex} if there is $\lambda\ge 0$ such that
$$
f(x) \geq f(y) + \langle \nabla f(y), x - y \rangle + \tfrac{\lambda}{2} \|x - y\|^2\quad\forall x, y \in \mathbb{R}^d.
$$
The quantity $\kappa := L/\lambda$ is called the \emph{condition number} of $f$, whenever $f$ is $L$-smooth and $\lambda$-strongly convex. We say $f$ is non-strongly convex when $f$ is $0$-strongly convex. 

We also recall the class of gradient dominated functions~\citep{Polyak1963,nesterov2006}, where a function $f$ is called $\tau$-\emph{gradient dominated} if for any $x\in \reals^d$ 
\begin{equation}
  \label{eq:2}
  f(x) - f(x^*) \leq \tau \|\nabla f(x)\|^2,
\end{equation}
where $x^*$ is a global minimizer of $f$. Note that such a function $f$ need not be convex; it is also easy to show that a $\lambda$-strongly convex function is $1/2\lambda$-gradient dominated.

We analyze convergence rates for the above classes of functions. Following \citet{nesterov03,Ghadimi13} we use $\|\nabla f(x)\|^2 \le \epsilon$ to judge when is iterate $x$ approximately stationary. Contrast this with \sgd for convex $f$, where one uses $[f(x) - f(x^*)]$ or $\|x - x^*\|^2$ as a convergence criterion. Unfortunately, such criteria cannot be used for nonconvex functions due to the hardness of the problem. While the quantities $\|\nabla f(x)\|^2$ and $f(x) - f(x^*)$ or $\|x - x^*\|^2$ are not comparable in general (see \citep{Ghadimi13}), they are typically assumed to be of similar magnitude. Throughout our analysis, we do \emph{not} assume $n$ to be constant, and report dependence on it in our results. For our analysis, we need the following definition.
\begin{definition}
  \label{def:eps-accurate}
  A point $x$ is called $\epsilon$-accurate if $\|\nabla f(x)\|^2 \leq \epsilon$. A stochastic iterative algorithm is said to achieve $\epsilon$-accuracy in $t$ iterations if $\mathbb{E}[\|\nabla f(x^t)\|^2] \leq \epsilon$, where the expectation is over the stochasticity of the algorithm.
\end{definition}
We introduce one more definition useful in the analysis of \sgd methods for bounding the variance. 
\begin{definition}
  We say $f \in \mathcal{F}_n$ has a $\sigma$-bounded gradient if $\|\nabla f_i(x)\| \leq \sigma$ for all $i \in [n]$ and $x \in \mathbb{R}^d$.
\end{definition}

\subsection{Nonconvex SGD: Convergence Rate}
\label{sec:sgd}
Stochastic gradient descent (\sgd) is one of the simplest algorithms for solving \eqref{eq:1}; Algorithm~\ref{alg:sgd} lists its pseudocode.
\begin{algorithm}[h]\small
   \caption{SGD}
   \label{alg:sgd}
\begin{algorithmic}
   \STATE {\bfseries Input:} $x^0 \in \mathbb{R}^d$, Step-size sequence: $\{\eta_t > 0\}_{t=0}^{T-1}$
   \FOR{$t=0$ {\bfseries to} $T-1$}
   \STATE Uniformly randomly pick $i_t$ from $\{1, \dots, n\}$
   \STATE $x^{t+1} = x^{t} - \eta_t \nabla f_{i_t}(x)$
   \ENDFOR
\end{algorithmic}
\end{algorithm}
By using a uniformly randomly chosen (with replacement) index $i_t$  from $[n]$, $\sgd$ uses an unbiased estimate of the gradient at each iteration. Under appropriate conditions, \citet{Ghadimi13} establish convergence rate of \sgd to a stationary point of $f$. Their results include the following theorem.

\begin{theorem}
  \label{thm:sgd-conv}
  Suppose $f$ has $\sigma$-bounded gradient; let $\eta_t = \eta = c/\sqrt{T}$ where $c = \sqrt{\tfrac{2(f(x^0) - f(x^*))}{L\sigma^2}}$, 
  and $x^*$ is an optimal solution to~\eqref{eq:1}. Then, the iterates of Algorithm~\ref{alg:sgd} satisfy
  \begin{align*}
    \min_{0 \leq t \leq T-1} \mathbb{E}[\|\nabla f(x^t)\|^2] \leq
    \sqrt{\frac{2(f(x^0) - f(x^*)) L}{T}}\sigma.
  \end{align*}
\end{theorem}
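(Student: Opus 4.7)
The plan is to follow the standard descent-lemma approach that underlies the nonconvex \sgd analysis of \citet{Ghadimi13}. The starting point is the $L$-smoothness of $f$, which yields the quadratic upper bound
\begin{equation*}
f(x^{t+1}) \le f(x^t) + \langle \nabla f(x^t),\, x^{t+1} - x^t\rangle + \tfrac{L}{2}\|x^{t+1} - x^t\|^2.
\end{equation*}
Substituting the \sgd update $x^{t+1} - x^t = -\eta\,\nabla f_{i_t}(x^t)$ expresses the right-hand side purely in terms of $\nabla f(x^t)$ and the stochastic gradient $\nabla f_{i_t}(x^t)$.

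Next I would take expectation conditional on $x^t$. Because $i_t$ is drawn uniformly at random from $[n]$, unbiasedness gives $\mathbb{E}[\nabla f_{i_t}(x^t)\mid x^t] = \nabla f(x^t)$, so the cross term collapses to $-\eta\|\nabla f(x^t)\|^2$. The quadratic term is controlled by the $\sigma$-bounded gradient assumption, which gives $\mathbb{E}[\|\nabla f_{i_t}(x^t)\|^2 \mid x^t] \le \sigma^2$. Taking total expectation then yields a one-step descent inequality of the form
\begin{equation*}
\mathbb{E}[f(x^{t+1})] \le \mathbb{E}[f(x^t)] - \eta\,\mathbb{E}[\|\nabla f(x^t)\|^2] + \tfrac{L\eta^2}{2}\sigma^2.
\end{equation*}

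The remaining steps are routine: rearrange, telescope the inequality from $t=0$ to $T-1$, and use $f(x^T) \ge f(x^*)$ on the left to obtain
\begin{equation*}
\sum_{t=0}^{T-1} \mathbb{E}[\|\nabla f(x^t)\|^2] \le \frac{f(x^0) - f(x^*)}{\eta} + \frac{TL\eta\sigma^2}{2}.
\end{equation*}
Dividing by $T$ bounds the \emph{average} expected squared gradient norm, and since the minimum is no larger than the average, the same bound holds for $\min_{0\le t\le T-1}\mathbb{E}[\|\nabla f(x^t)\|^2]$. Finally I plug in $\eta = c/\sqrt{T}$; the choice $c = \sqrt{2(f(x^0)-f(x^*))/(L\sigma^2)}$ is precisely the one that balances the two terms $(f(x^0)-f(x^*))/(\eta T)$ and $L\eta\sigma^2/2$, and a short calculation turns the balanced value into the advertised rate $\sqrt{2(f(x^0)-f(x^*))L/T}\,\sigma$.

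There is no real obstacle here — the argument is a textbook ``descent lemma plus telescoping'' computation, and the only mildly delicate point is correctly keeping track of the conditional expectation so that unbiasedness can be used on the inner product while the second-moment bound is applied to the squared-norm term. The optimization over the constant $c$ is an elementary one-variable minimization of $A/c + Bc$, giving $c = \sqrt{A/B}$, which is why the stated $c$ is the right choice.
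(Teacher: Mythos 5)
Your proposal is correct and follows essentially the same route as the paper's own proof: the $L$-smoothness descent lemma, unbiasedness of $\nabla f_{i_t}(x^t)$ for the cross term, the $\sigma$-bounded-gradient assumption for the second moment, telescoping with $f(x^T)\ge f(x^*)$, bounding the minimum by the average, and balancing the two terms via the stated choice of $c$. No gaps.
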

For completeness we present a proof in the appendix. Note that our choice of step size $\eta$ requires knowing the total number of iterations $T$ in advance. A more practical approach is to use a $\eta_t \propto 1/\sqrt{t}$ or $1/t$. A bound on IFO calls made by Algorithm~\ref{alg:sgd} follows as a corollary of Theorem~\ref{thm:sgd-conv}.
\begin{corollary}
  \label{cor:sgd-oracle}
  Suppose function $f$ has $\sigma$-bounded gradient, then the IFO complexity of Algorithm~\ref{alg:sgd} to obtain an $\epsilon$-accurate solution is $O(1/\epsilon^2)$.
\end{corollary}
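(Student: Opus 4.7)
The plan is to derive the corollary as a direct consequence of Theorem~\ref{thm:sgd-conv}. First, I would observe that each iteration of Algorithm~\ref{alg:sgd} requires exactly one IFO call (to obtain the pair $(f_{i_t}(x^t), \nabla f_{i_t}(x^t))$), so the total number of IFO calls after $T$ iterations is exactly $T$. Hence it suffices to bound the number of iterations needed to drive $\min_{0 \leq t \leq T-1}\mathbb{E}[\|\nabla f(x^t)\|^2]$ below $\epsilon$.

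Next, I would invoke Theorem~\ref{thm:sgd-conv} with the prescribed step size $\eta = c/\sqrt{T}$, which yields
\[
  \min_{0 \leq t \leq T-1} \mathbb{E}[\|\nabla f(x^t)\|^2] \;\leq\; \sqrt{\tfrac{2(f(x^0) - f(x^*))L}{T}}\,\sigma.
\]
Setting the right-hand side to be at most $\epsilon$ and squaring both sides gives
\[
  T \;\geq\; \frac{2(f(x^0) - f(x^*))L\sigma^2}{\epsilon^2},
\]
so choosing $T = \lceil 2(f(x^0) - f(x^*))L\sigma^2/\epsilon^2 \rceil$ suffices. Since $[f(x^0) - f(x^*)]$, $L$, and $\sigma$ are treated as constants (as noted in the comparison discussion preceding Table~\ref{tab:algo-comparison}), this gives $T = O(1/\epsilon^2)$, and hence an IFO complexity of $O(1/\epsilon^2)$.

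There is essentially no obstacle here — the statement is a quantitative rearrangement of Theorem~\ref{thm:sgd-conv}. The only minor subtlety worth mentioning is that Theorem~\ref{thm:sgd-conv} bounds the \emph{minimum} over $t$ of $\mathbb{E}[\|\nabla f(x^t)\|^2]$, so the output that satisfies the $\epsilon$-accuracy criterion is the iterate $x^{t^\ast}$ with $t^\ast = \arg\min_t \mathbb{E}[\|\nabla f(x^t)\|^2]$; in practice one returns a random iterate (or the best observed one), which does not affect the order of IFO complexity.
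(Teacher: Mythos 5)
Your proposal is correct and follows essentially the same (and only natural) route as the paper, which states the corollary as an immediate consequence of Theorem~\ref{thm:sgd-conv}: one IFO call per iteration, so setting the bound $\sqrt{2(f(x^0)-f(x^*))L/T}\,\sigma \leq \epsilon$ and solving for $T$ gives $T = O(1/\epsilon^2)$. Your remark about the minimum over $t$ versus a specific output iterate is a fair observation and does not affect the conclusion.
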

As seen in Theorem~\ref{thm:sgd-conv}, $\sgd$ has a convergence rate of $O(\nicefrac{1}{\sqrt{T}})$. This rate is not improvable in general even when the function is (non-strongly) convex~\citep{nemYud83}. This barrier is due to the variance introduced by the stochasticity of the gradients, and it is not clear if better rates can be obtained \sgd even for convex $f\in\mathcal{F}_n$. 
 
\section{Nonconvex SVRG}
\label{sec:nsvrg}
We now turn our focus to variance reduced methods. We use \svrg~\citep{Johnson13}, an algorithm recently shown to be very effective for reducing variance in convex problems. As a result, it has gained considerable interest in both machine learning and optimization communities. We seek to understand its benefits for \emph{nonconvex} optimization. For reference, Algorithm~\ref{alg:svrg} presents \svrg's pseudocode.

Observe that Algorithm~\ref{alg:svrg} operates in epochs. At the end of epoch $s$, a full gradient is calculated at the point $\tilde{x}^{s}$, requiring $n$ calls to the IFO. Within its inner loop \svrg performs $m$ stochastic updates. The total number of IFO calls for each epoch is thus $\Theta(m +  n)$. For $m = 1$, the algorithm reduces to the classic $\gd$ algorithm. Suppose $m$ is chosen to be $O(n)$ (typically used in practice), then the total IFO calls per epoch is $\Theta(n)$. To enable a fair comparison with $\sgd$, we assume that the total number of inner iterations across all epochs in Algorithm~\ref{alg:svrg} is $T$. Also note a simple but important implementation detail: as written,  Algorithm~\ref{alg:svrg} requires storing all the iterates $x^{s+1}_t$ (\fromto{0}{t}{m}). This storage can be avoided by keeping a running average with respect to the probability distribution $\{p_i\}_{i=0}^m$.

Algorithm~\ref{alg:svrg} attains linear convergence for strongly convex $f$~\citep{Johnson13}; for non-strongly convex functions, rates faster than \sgd can be shown by using an indirect perturbation argument---see e.g.,~\citep{Konecny2013,Xiao14}. 

\begin{algorithm}[tb]\small
   \caption{SVRG$\left(x^0,T, m, \{p_i\}_{i=0}^{m}, \{\eta_i\}_{i=0}^{m-1}\right)$}
   \label{alg:svrg}
\begin{algorithmic}[1]
   \STATE {\bfseries Input:} $\tilde{x}^0 = x^0_m = x^0 \in \mathbb{R}^d$,  epoch length $m$, step sizes $\{\eta_i > 0\}_{i=0}^{m-1}$, $S = \lceil T/m \rceil$, discrete probability distribution $\{p_i\}_{i=0}^{m}$
   \FOR{$s=0$ {\bfseries to} $S-1$}
   \STATE $x^{s+1}_0 = x^{s}_m$
   \STATE $g^{s+1} = \frac{1}{n} \sum_{i=1}^n \nabla f_{i}(\tilde{x}^{s})$
   \FOR{$t=0$ {\bfseries to} $m-1$}
   \STATE Uniformly randomly pick $i_t$ from $\{1, \dots, n\}$ \label{alg1:ln:sample}
   \STATE $v_t^{s+1} =  \nabla f_{i_t}(x^{s+1}_t) - \nabla f_{i_t}(\tilde{x}^{s}) + g^{s+1}$ \label{alg1:ln:update}
   \STATE $x^{s+1}_{t+1} = x^{s+1}_{t} - \eta_t v_t^{s+1} $
   \ENDFOR
   \STATE $\tilde{x}^{s+1} = \sum_{i=0}^{m} p_i x_{i}^{s+1}$
   \ENDFOR
   \STATE {\bfseries Output:} Iterate $x_a$ chosen uniformly random from $\{\{x^{s+1}_t\}_{t=0}^{m-1}\}_{s=0}^{S-1}$.
\end{algorithmic}
\end{algorithm}

We first state an intermediate result for the iterates of nonconvex \svrg. To ease exposition, we define
\begin{align}
\label{eq:Gamma-t}
  \Gamma_t = \bigl(\eta_t - \frac{c_{t+1}\eta_t}{\beta_t} - \eta_t^2L - 2c_{t+1}\eta_t^2\bigr),
\end{align}
for some parameters $c_{t+1}$ and $\beta_t$ (to be defined shortly).
Our first main result is the following theorem that provides convergence rate of Algorithm~\ref{alg:svrg}.
\begin{theorem}
  Let $f \in \Fc_n$. Let $c_m = 0$, $\eta_t = \eta > 0$, $\beta_t = \beta > 0$, and $c_{t} = c_{t+1}(1 + \eta\beta + 2\eta^2L^2 ) +  \eta^2L^3$ such that $\Gamma_t > 0$ for \fromto{0}{t}{m-1}. Define the quantity $\gamma_n := \min_t \Gamma_t$. 
  Further, let $p_{i} = 0$ for $0{\;\leq\;}i{\;<\;}m$ and $p_{m} = 1$, and let $T$ be a multiple of $m$. Then for the output $x_a$ of Algorithm~\ref{alg:svrg} we have
  \begin{align*}
    \mathbb{E}[\|\nabla f(x_a)\|^2] \leq \frac{f(x^{0}) - f(x^*)}{T\gamma_n},
  \end{align*}
  where $x^*$ is an optimal solution to~\eqref{eq:1}.
  \label{thm:nonconvex-inter}
\end{theorem}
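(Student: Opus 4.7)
I will prove Theorem~\ref{thm:nonconvex-inter} by constructing a Lyapunov-type potential that combines the function value with a penalty on how far the inner iterate has drifted from the snapshot point $\tilde x^s$. Specifically, I will work with
\begin{equation*}
  R_t^{s+1} \;:=\; \mathbb{E}\bigl[f(x_t^{s+1}) + c_t \|x_t^{s+1} - \tilde x^{s}\|^2\bigr],
\end{equation*}
where $c_t$ is the sequence defined in the theorem statement, with $c_m=0$. The recursion $c_t = c_{t+1}(1+\eta\beta+2\eta^2L^2)+\eta^2L^3$ is designed exactly so that a one-step drop in $R_t^{s+1}$ pays for a $\Gamma_t\mathbb{E}\|\nabla f(x_t^{s+1})\|^2$ term, so that the coefficients telescope cleanly.

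\textbf{Key steps.} First, using $L$-smoothness of $f$ and the update $x_{t+1}^{s+1}=x_t^{s+1}-\eta_t v_t^{s+1}$, I would obtain
\begin{equation*}
  \mathbb{E}[f(x_{t+1}^{s+1})] \;\leq\; \mathbb{E}\bigl[f(x_t^{s+1})\bigr] - \eta_t\mathbb{E}\langle \nabla f(x_t^{s+1}), v_t^{s+1}\rangle + \tfrac{L\eta_t^2}{2}\mathbb{E}\|v_t^{s+1}\|^2 .
\end{equation*}
Because $i_t$ is chosen uniformly and independently, $\mathbb{E}[v_t^{s+1}\mid x_t^{s+1}] = \nabla f(x_t^{s+1})$, which handles the cross term. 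For the quadratic term I would use the standard SVRG variance bound
\begin{equation*}
  \mathbb{E}\|v_t^{s+1}\|^2 \;\leq\; \mathbb{E}\|\nabla f(x_t^{s+1})\|^2 + L^2\,\mathbb{E}\|x_t^{s+1}-\tilde x^{s}\|^2,
\end{equation*}
which follows from $L$-smoothness of each $f_i$ and the unbiased property.

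Next I would expand the drift term $\|x_{t+1}^{s+1}-\tilde x^s\|^2=\|x_t^{s+1}-\tilde x^s-\eta_t v_t^{s+1}\|^2$, take conditional expectation, and apply Young's inequality
\begin{equation*}
  \mathbb{E}\langle x_t^{s+1}-\tilde x^s, -\nabla f(x_t^{s+1})\rangle \;\leq\; \tfrac{1}{2\beta_t}\mathbb{E}\|\nabla f(x_t^{s+1})\|^2 + \tfrac{\beta_t}{2}\mathbb{E}\|x_t^{s+1}-\tilde x^s\|^2,
\end{equation*}
with $\beta_t=\beta$. Combining this with the smoothness bound and grouping terms by $\|\nabla f(x_t^{s+1})\|^2$ and $\|x_t^{s+1}-\tilde x^s\|^2$, the coefficient of the drift on the right side is exactly $c_{t+1}(1+\eta\beta+2\eta_t^2L^2)+\eta_t^2L^3 = c_t$, which is precisely why the given recursion for $c_t$ was chosen. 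The coefficient of $\mathbb{E}\|\nabla f(x_t^{s+1})\|^2$ collapses to $-\Gamma_t$ as defined in~\eqref{eq:Gamma-t}. Thus I arrive at the one-step inequality
\begin{equation*}
  R_{t+1}^{s+1} \;\leq\; R_t^{s+1} - \Gamma_t\,\mathbb{E}\|\nabla f(x_t^{s+1})\|^2.
\end{equation*}

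Finally, I would telescope. Since $p_m=1$ and $p_i=0$ for $i<m$, we have $\tilde x^{s+1}=x_m^{s+1}$, so $x_0^{s+2}=\tilde x^{s+1}$ makes the penalty term vanish at the start of each epoch; combined with $c_m=0$ this gives $R_m^{s+1} = \mathbb{E} f(\tilde x^{s+1}) = R_0^{s+2}$. Telescoping the one-step inequality over $t=0,\dots,m-1$ within an epoch and then over $s=0,\dots,S-1$, using $\gamma_n:=\min_t\Gamma_t$ and $\mathbb{E}f(\tilde x^S)\geq f(x^*)$, yields
\begin{equation*}
  \gamma_n \sum_{s=0}^{S-1}\sum_{t=0}^{m-1}\mathbb{E}\|\nabla f(x_t^{s+1})\|^2 \;\leq\; f(x^0)-f(x^*).
\end{equation*}
Dividing by $T=Sm$ and noting that $x_a$ is drawn uniformly at random from all $T$ inner iterates gives exactly the stated bound.

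\textbf{Main obstacle.} The delicate step is the bookkeeping that produces the coefficient $c_t$ on $\|x_t^{s+1}-\tilde x^s\|^2$ after combining the smoothness descent, the variance bound on $v_t^{s+1}$, and Young's inequality in the drift expansion. The factor $(1+\eta\beta+2\eta^2L^2)$ and the additive $\eta^2L^3$ both have to appear with the exact coefficients that match the stated recurrence; any looser inequality here destroys the telescoping and inflates $\gamma_n$.
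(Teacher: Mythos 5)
Your proposal is correct and follows essentially the same route as the paper: the same Lyapunov function $R_t^{s+1}$, the same recursion for $c_t$, and the same two-level telescoping using $c_m=0$, $p_m=1$ to collapse $R^{s+1}_m$ to $\mathbb{E}[f(\tilde x^{s+1})]$ (the paper packages the one-step inequality as Lemma~\ref{lem:nonconvex-svrg}). One small note: the paper's variance bound (Lemma~\ref{lem:nonconvex-variance-lemma}) carries factors of $2$, namely $\mathbb{E}\|v_t^{s+1}\|^2 \le 2\mathbb{E}\|\nabla f(x_t^{s+1})\|^2 + 2L^2\mathbb{E}\|x_t^{s+1}-\tilde x^s\|^2$, and it is with these factors that the drift coefficient matches $c_{t+1}(1+\eta\beta+2\eta^2L^2)+\eta^2L^3$ \emph{exactly} and the gradient coefficient matches $\Gamma_t$; your tighter bound without the $2$'s (obtainable via the bias--variance decomposition) still yields a valid proof, but then the resulting coefficients are merely dominated by $c_t$ and $-\Gamma_t$ rather than equal to them, so the word ``exactly'' in your bookkeeping step should be replaced by an inequality.
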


Furthermore, we can also show that nonconvex \svrg exhibits expected descent (in objective) after every epoch. The condition that $T$ is a multiple of $m$ is solely for convenience and can be removed by slight modification of the theorem statement. Note that the value $\gamma_n$ above can depend on $n$. To obtain an explicit dependence, we simplify it using specific choices for $\eta$ and $\beta$, as formalized below. 

\begin{theorem}
  Suppose $f \in \Fc_n$. Let $\eta = \mu_0/(Ln^{\alpha})$ ($0 < \mu_0 < 1$ and $0 < \alpha \leq 1$), $\beta = L/n^{\alpha/2}$, $m = \lfloor n^{3\alpha/2}/(3\mu_0) \rfloor$ and $T$ is some multiple of $m$. Then there exists universal constants $\mu_0, \nu > 0$ such that we have the following: $\gamma_n \geq \frac{\nu}{Ln^{\alpha}}$ in Theorem~\ref{thm:nonconvex-inter} and
  \begin{align*}
    \mathbb{E}[\|\nabla f(x_a)\|^2] &\leq \frac{Ln^{\alpha} [f(x^{0}) - f(x^*)]}{T\nu},
  \end{align*} 
  where $x^*$ is an optimal solution to the problem in~\eqref{eq:1} and $x_a$ is the output of Algorithm~\ref{alg:svrg}.
  \label{thm:nonconvex-gen}
\end{theorem}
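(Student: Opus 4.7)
The plan is to instantiate Theorem~\ref{thm:nonconvex-inter} with the stated choices of $\eta$, $\beta$, $m$, and derive an explicit lower bound on $\gamma_n = \min_t \Gamma_t$ in terms of $n$ and $L$. The heart of the argument is a careful bounding of the auxiliary sequence $\{c_t\}$ defined by the recursion $c_t = c_{t+1}(1 + \eta\beta + 2\eta^2 L^2) + \eta^2 L^3$ with terminal value $c_m = 0$.

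First I would solve the $c_t$ recursion in closed form. Setting $\theta := 1 + \eta\beta + 2\eta^2 L^2$, iterating the recursion from the terminal condition yields
\begin{equation*}
c_t \;=\; \eta^2 L^3 \sum_{j=0}^{m-t-1}\theta^j \;=\; \frac{\eta^2 L^3(\theta^{m-t}-1)}{\eta\beta + 2\eta^2L^2},
\end{equation*}
so that the maximum value $c_0 = \eta L^3(\theta^m-1)/(\beta + 2\eta L^2)$ controls all the $c_{t+1}$'s. Plugging in $\eta = \mu_0/(Ln^\alpha)$ and $\beta = L/n^{\alpha/2}$ gives $\eta\beta = \mu_0/n^{3\alpha/2}$ and $2\eta^2L^2 = 2\mu_0^2/n^{2\alpha}$. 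With $m = \lfloor n^{3\alpha/2}/(3\mu_0)\rfloor$ the product $m(\theta-1)$ is bounded by $\tfrac{1}{3} + O(\mu_0 n^{-\alpha/2})$, so using $\theta^m \leq e^{m(\theta-1)}$ yields $\theta^m \leq e$ for $\mu_0$ small enough. This makes $c_0 \leq (e-1)\eta L^3/\beta = (e-1)\mu_0 L/n^{\alpha/2}$ (up to lower-order terms).

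Next I would substitute these bounds into $\Gamma_t = \eta - c_{t+1}\eta/\beta - \eta^2 L - 2c_{t+1}\eta^2$. Since $c_t$ is monotonically decreasing in $t$, the minimum of $\Gamma_t$ occurs at $t=0$ and is controlled by $c_0$. Each of the three subtracted terms can be written as $\eta$ times a small factor: the first is $c_0/\beta \leq (e-1)\mu_0$, the second is $\eta L = \mu_0/n^\alpha$, and the third is $2c_0\eta \leq 2(e-1)\mu_0^2/n^{3\alpha/2}$. Therefore by choosing the universal constant $\mu_0$ small enough (e.g.\ so that $(e-1)\mu_0 \leq 1/4$ and the remaining terms are each at most $1/4$), all three subtracted terms together are at most $\eta/2$, giving $\Gamma_t \geq \eta/2$ for every $t$, i.e.\ $\gamma_n \geq \mu_0/(2Ln^\alpha)$. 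This furnishes the required $\nu$.

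Finally I would plug the lower bound $\gamma_n \geq \nu/(Ln^\alpha)$ into the conclusion of Theorem~\ref{thm:nonconvex-inter} to obtain the claimed bound on $\mathbb{E}[\|\nabla f(x_a)\|^2]$. The main obstacle is the balancing step: the four competing terms in $\Gamma_t$ scale differently in $n$ and $\mu_0$, and one must verify that the choice $\beta \sim L/n^{\alpha/2}$ together with the epoch length $m \sim n^{3\alpha/2}/\mu_0$ simultaneously (i) keeps $\theta^m$ uniformly bounded so that $c_0$ does not blow up, and (ii) makes $c_0/\beta$ a small constant rather than something that grows with $n$. The specific scalings $\eta\beta = \mu_0/n^{3\alpha/2}$ and $m(\theta-1)=O(1)$ are precisely what makes these two requirements compatible, and picking $\mu_0$ sufficiently small (but universal, independent of $n$) closes the argument.
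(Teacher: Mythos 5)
Your proposal is correct and follows essentially the same route as the paper's proof: solve the $c_t$ recursion as a geometric sum, use the choice $m=\lfloor n^{3\alpha/2}/(3\mu_0)\rfloor$ to keep $\theta^m$ bounded by $e$, deduce $c_0\leq (e-1)\mu_0 L n^{-\alpha/2}$, and then show each subtracted term in $\Gamma_t$ is a small constant multiple of $\eta$ once $\mu_0$ is a sufficiently small universal constant. The only cosmetic difference is that you bound $\theta^m\leq e^{m(\theta-1)}$ while the paper invokes the monotone limit $(1+1/l)^l\nearrow e$; both yield the same estimate.
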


By rewriting the above result in terms IFO calls, we get the following general corollary for nonconvex \svrg.

\begin{corollary}
  \label{cor:svrg-nonconvex-oracle-gen}
  Suppose $f \in \Fc_n$. Then the IFO complexity of Algorithm~\ref{alg:svrg} (with parameters from Theorem~\ref{thm:nonconvex-gen}) for achieving an $\epsilon$-accurate solution is:
  \begin{align*}
    \text{IFO calls} =
    \begin{cases}
      O\left(n + (n^{1 - \frac{\alpha}{2}}/\epsilon)\right),  & \text{if } \alpha < 2/3, \\
      O\left(n + (n^{\alpha}/\epsilon)\right), & \text{if } \alpha \geq 2/3.
    \end{cases}
  \end{align*}
\end{corollary}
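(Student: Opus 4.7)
The plan is to translate the iteration bound in Theorem~\ref{thm:nonconvex-gen} into an IFO-call count by examining the structure of Algorithm~\ref{alg:svrg}. First, I would invert the bound
\[
  \mathbb{E}[\|\nabla f(x_a)\|^2] \leq \frac{Ln^{\alpha}[f(x^0) - f(x^*)]}{T\nu}
\]
to determine the smallest number of inner iterations needed for $\epsilon$-accuracy, namely $T = O(n^{\alpha}/\epsilon)$, treating $L$, $\nu$, and $[f(x^0) - f(x^*)]$ as absolute constants in accordance with the conventions set just before Definition~\ref{def:eps-accurate}.

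Next I would count the per-epoch IFO cost. Each epoch $s$ of Algorithm~\ref{alg:svrg} begins by computing the full gradient $g^{s+1}$ at the cost of $n$ oracle calls, and then performs $m$ inner updates, each of which forms $v_t^{s+1} = \nabla f_{i_t}(x_t^{s+1}) - \nabla f_{i_t}(\tilde x^{s}) + g^{s+1}$ using two fresh gradient evaluations, for a total of $2m$ oracle calls in the inner loop. Thus an epoch costs $n + 2m$ IFO calls, and with $S = T/m$ epochs the cumulative cost is
\[
  \frac{T}{m}(n + 2m) \;=\; \frac{Tn}{m} + 2T.
\]

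Substituting the prescribed $m = \lfloor n^{3\alpha/2}/(3\mu_0)\rfloor = \Theta(n^{3\alpha/2})$ and the above $T$ gives the two candidate terms $Tn/m = O(n^{1 - \alpha/2}/\epsilon)$ and $2T = O(n^{\alpha}/\epsilon)$. Since $1 - \alpha/2 > \alpha$ exactly when $\alpha < 2/3$, the first term dominates in that regime and yields $O(n^{1-\alpha/2}/\epsilon)$; for $\alpha \geq 2/3$ the second term dominates and yields $O(n^{\alpha}/\epsilon)$. The additive $O(n)$ term in the stated bound corresponds to the unavoidable cost of the very first full-gradient computation, which must be paid regardless of how loose the target accuracy is.

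The only technical subtlety, as opposed to a genuine obstacle, is handling the regime where the required $T$ falls below one epoch length $m$, since Theorem~\ref{thm:nonconvex-gen} assumes $T$ is a multiple of $m$. In that case one must still run at least one full epoch, contributing the $n + 2m$ floor; for $\alpha < 2/3$ this is $O(n)$ and is absorbed by the additive $n$, while for $\alpha \geq 2/3$ the floor $O(n^{3\alpha/2})$ is dominated by $O(n^{\alpha}/\epsilon)$ in the meaningful accuracy regime $\epsilon \lesssim n^{-\alpha/2}$. Beyond this bit of bookkeeping, the proof reduces to the straightforward arithmetic above.
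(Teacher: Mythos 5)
Your proposal is correct and takes essentially the same route as the paper's proof: invert the bound of Theorem~\ref{thm:nonconvex-gen} to get $T = O(n^{\alpha}/\epsilon)$, charge $n + O(m)$ IFO calls per epoch with $m = \Theta(n^{3\alpha/2})$, and observe that the full-gradient term $Tn/m = O(n^{1-\alpha/2}/\epsilon)$ dominates exactly when $\alpha < 2/3$. Your handling of the case where the required $T$ is below one epoch length is a small bookkeeping refinement that the paper's proof omits.
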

%

Corollary~\ref{cor:svrg-nonconvex-oracle-gen} shows the interplay between step size and the IFO complexity. We observe that the number of IFO calls is minimized in Corollary~\ref{cor:svrg-nonconvex-oracle-gen} when $\alpha = 2/3$. This gives rise to the following key results of the paper.

\begin{corollary}
  Suppose $f \in \Fc_n$. Let $\eta = \mu_1/(Ln^{2/3})$ ($0 < \mu_1 < 1$), $\beta = L/n^{1/3}$, $m = \lfloor n/(3\mu_1) \rfloor$ and $T$ is some multiple of $m$. Then there exists universal constants $\mu_1, \nu_1 > 0$ such that we have the following: $\gamma_n \geq \frac{\nu_1}{Ln^{2/3}}$ in Theorem~\ref{thm:nonconvex-inter} and
\begin{align*}
\mathbb{E}[\|\nabla f(x_a)\|^2] &\leq \frac{Ln^{2/3} [f(x^{0}) - f(x^*)]}{T\nu_1},
\end{align*} 
where $x^*$ is an optimal solution to the problem in~\eqref{eq:1} and $x_a$ is the output of Algorithm~\ref{alg:svrg}.
\label{cor:nonconvex}
\end{corollary}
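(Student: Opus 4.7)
The plan is to treat Corollary~\ref{cor:nonconvex} as a direct specialization of Theorem~\ref{thm:nonconvex-gen} with $\alpha = 2/3$. Since Theorem~\ref{thm:nonconvex-gen} already guarantees, for \emph{any} $\alpha \in (0,1]$, the existence of universal constants $\mu_0,\nu>0$ for which the parameter choice $\eta = \mu_0/(Ln^\alpha)$, $\beta = L/n^{\alpha/2}$, $m = \lfloor n^{3\alpha/2}/(3\mu_0)\rfloor$ yields $\gamma_n \geq \nu/(Ln^\alpha)$ together with the displayed gradient bound, the work reduces to substituting $\alpha = 2/3$ and relabeling constants.

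First I would write out what Theorem~\ref{thm:nonconvex-gen} says at $\alpha = 2/3$: the step size becomes $\eta = \mu_0/(Ln^{2/3})$, the auxiliary parameter becomes $\beta = L/n^{(2/3)/2} = L/n^{1/3}$, and the epoch length becomes $m = \lfloor n^{3(2/3)/2}/(3\mu_0)\rfloor = \lfloor n/(3\mu_0)\rfloor$. These match the parameter settings in the statement of Corollary~\ref{cor:nonconvex} verbatim once we rename $\mu_0 \mapsto \mu_1$ and $\nu \mapsto \nu_1$.

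Next I would invoke Theorem~\ref{thm:nonconvex-gen} to conclude $\gamma_n \geq \nu_1/(Ln^{2/3})$ and
\begin{equation*}
  \mathbb{E}[\|\nabla f(x_a)\|^2] \leq \frac{Ln^{2/3}[f(x^0) - f(x^*)]}{T\nu_1},
\end{equation*}
which is exactly the claim. Because Theorem~\ref{thm:nonconvex-gen} requires $T$ to be a multiple of $m$, and the corollary makes the same assumption, no additional handling is needed.

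Since this is purely a specialization, there is essentially no technical obstacle. The only thing worth double-checking is that the universal constants produced by Theorem~\ref{thm:nonconvex-gen} at $\alpha = 2/3$ can be renamed without disturbing the conditions $0 < \mu_1 < 1$ and $\nu_1 > 0$; this is immediate from the corresponding conditions on $\mu_0,\nu$ in Theorem~\ref{thm:nonconvex-gen}. The value $\alpha = 2/3$ is optimal in the sense indicated by Corollary~\ref{cor:svrg-nonconvex-oracle-gen}, so this specialization extracts the best rate available from the general theorem, but this optimality observation is not itself part of the proof of Corollary~\ref{cor:nonconvex}.
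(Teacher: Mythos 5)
Your proposal is correct and matches the paper's own treatment: the paper states Corollary~\ref{cor:nonconvex} immediately after observing that $\alpha = 2/3$ minimizes the IFO complexity in Corollary~\ref{cor:svrg-nonconvex-oracle-gen}, and obtains it precisely as the $\alpha = 2/3$ specialization of Theorem~\ref{thm:nonconvex-gen} with the constants relabeled $\mu_0 \mapsto \mu_1$, $\nu \mapsto \nu_1$ (the explicit choice $\mu_0 = 1/4$, $\nu = 1/40$ from the proof of Theorem~\ref{thm:nonconvex-gen} works here as well). Your parameter substitutions $\beta = L/n^{1/3}$ and $m = \lfloor n/(3\mu_1)\rfloor$ are verified correctly, so nothing further is needed.
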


\begin{corollary}
\label{cor:svrg-nonconvex-oracle}
If $f \in \Fc_n$, then the IFO complexity of Algorithm~\ref{alg:svrg} (with parameters in Corollary~\ref{cor:nonconvex}) to obtain an $\epsilon$-accurate solution is $O(n + (n^{2/3}/\epsilon))$.
\end{corollary}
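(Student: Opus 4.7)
The plan is to derive this IFO complexity bound as a direct consequence of the convergence rate in Corollary~\ref{cor:nonconvex}, combined with a simple accounting of oracle calls per epoch of Algorithm~\ref{alg:svrg}. Since all the heavy analytical lifting (bounding $\gamma_n$, choosing $\eta$, $\beta$, $m$) has already been done, this proof should amount to inverting the rate bound to solve for $T$, then multiplying by the per-iteration IFO cost. Alternatively, one can simply invoke Corollary~\ref{cor:svrg-nonconvex-oracle-gen} with $\alpha = 2/3$, which minimizes the IFO complexity expression there.

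More concretely, I would proceed as follows. By Corollary~\ref{cor:nonconvex}, the output $x_a$ satisfies $\mathbb{E}[\|\nabla f(x_a)\|^2] \leq Ln^{2/3}[f(x^0)-f(x^*)]/(T\nu_1)$. To ensure this is at most $\epsilon$, it suffices to take $T = \Theta\bigl(Ln^{2/3}[f(x^0)-f(x^*)]/(\nu_1 \epsilon)\bigr) = O(n^{2/3}/\epsilon)$ inner iterations, treating $L$ and $f(x^0)-f(x^*)$ as constants (consistent with the convention stated in the caption of Table~\ref{tab:algo-comparison}). Next I would tally IFO calls per epoch: line~4 of Algorithm~\ref{alg:svrg} costs $n$ oracle calls for the reference gradient $g^{s+1}$, while the inner loop performs $m$ updates, each involving two IFO queries at $x^{s+1}_t$ and $\tilde{x}^s$ (line~7), for a total of $n + 2m$ calls per epoch. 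With $m = \lfloor n/(3\mu_1)\rfloor = \Theta(n)$, each epoch costs $\Theta(n)$ IFO calls.

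The number of epochs is $S = \lceil T/m \rceil$, so the total IFO complexity is $S \cdot \Theta(n) = \Theta(n) \cdot \lceil T/m \rceil$. Using the ceiling to handle the two regimes uniformly, $\lceil T/m\rceil \le 1 + T/m$, so the total cost is
\begin{equation*}
O\!\left(n + \frac{Tn}{m}\right) = O\!\left(n + \frac{(n^{2/3}/\epsilon)\cdot n}{n}\right) = O\!\left(n + \frac{n^{2/3}}{\epsilon}\right),
\end{equation*}
which is exactly the claimed bound. The $+n$ term captures the unavoidable cost of running at least one epoch, which dominates when $\epsilon$ is not too small (equivalently, $T < m$).

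I do not anticipate any real obstacles: the only care required is to keep the ceiling term visible so that the additive $n$ appears, and to double-check that $m = \Theta(n)$ makes the inner-loop cost match the full-gradient cost per epoch (so no hidden constants degrade the bound). It is also reassuring that substituting $\alpha = 2/3$ into Corollary~\ref{cor:svrg-nonconvex-oracle-gen} gives the same answer, which provides an immediate sanity check and indeed serves as an alternative one-line proof.
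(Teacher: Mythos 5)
Your proposal is correct and follows essentially the same route as the paper, which obtains this corollary by specializing Corollary~\ref{cor:svrg-nonconvex-oracle-gen} to $\alpha=2/3$ (equivalently, inverting the rate in Corollary~\ref{cor:nonconvex} and noting that each epoch costs $\Theta(n)$ IFO calls since $m=\Theta(n)$). The bookkeeping with the ceiling to produce the additive $n$ term matches the paper's accounting of the full-gradient cost per epoch.
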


Note the rate of $O(1/T)$ in the above results, as opposed to slower $O(1/\sqrt{T})$ rate of $\sgd$ (Theorem~\ref{thm:sgd-conv}). For a more comprehensive comparison of the rates, refer to Section~\ref{sec:comparison}.

\subsection{Gradient Dominated Functions}
\label{sec:grad-dom}
\begin{algorithm}[tb]\small
   \caption{GD-SVRG$\left(x^0, K, T, m, \{p_i\}_{i=0}^{m}, \{\eta_i\}_{i=0}^{m-1}\right)$}
   \label{alg:gd-svrg}
\begin{algorithmic}
   \STATE {\bfseries Input:} $x^0 \in \mathbb{R}^d$, $K$,  epoch length $m$, step sizes $\{\eta_i > 0\}_{i=0}^{m-1}$, discrete probability distribution $\{p_i\}_{i=0}^{m}$
   \FOR{$k=0$ to $K$}
   \STATE $x^{k} = \text{SVRG}(x^{k-1},T, m, \{p_i\}_{i=0}^{m}, \{\eta_i\}_{i=0}^{m-1})$
   \ENDFOR
   \STATE {\bfseries Output:} $x^K$
\end{algorithmic}
\end{algorithm}

Before ending our discussion on convergence of nonconvex \svrg, we prove a linear convergence rate for the class of $\tau$-gradient dominated functions~\eqref{eq:2}. 
For ease of exposition, assume that $\tau > n^{1/3}$, a property analogous to the ``high condition number regime'' for strongly convex functions typical in machine learning. Note that gradient dominated functions can be nonconvex. 

\begin{theorem}
  \label{thm:gd-svrg-thm1}
  Suppose $f$ is $\tau$-gradient dominated where $\tau > n^{1/3}$. Then, the iterates of Algorithm~\ref{alg:gd-svrg} with $T = \lceil 2L\tau n^{2/3}/\nu_1 \rceil$, $m = \lfloor n/(3\mu_1) \rfloor$, $\eta_t = \mu_1/(Ln^{2/3})$ for all $0 \leq t \leq m-1$ and $p_{m} = 1$ and $p_i = 0$ for all $0 \leq i < m$ satisfy
  $$\mathbb{E}[\|\nabla f(x^k)\|^2] \leq 2^{-k}[\| \nabla f(x^{0}) \|^2].
  $$
  Here $\mu_1$ and $\nu_1$ are the constants used in Corollary~\ref{cor:nonconvex}.
\end{theorem}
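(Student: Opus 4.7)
The plan is to observe that Algorithm~\ref{alg:gd-svrg} is just the outer loop that restarts nonconvex \svrg $K$ times, so one round of \svrg maps $x^{k-1}\mapsto x^{k}$. I would bound the per-round contraction in expectation, and then iterate.

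First I would invoke Corollary~\ref{cor:nonconvex} on the inner call $\text{SVRG}(x^{k-1},T,m,\dots)$ with exactly the step size $\eta_t=\mu_1/(Ln^{2/3})$, epoch length $m=\lfloor n/(3\mu_1)\rfloor$ and $p_m=1$ stipulated by the theorem. Conditioning on $x^{k-1}$, this corollary yields
\begin{equation*}
\mathbb{E}\bigl[\|\nabla f(x^{k})\|^2 \,\big|\, x^{k-1}\bigr] \;\leq\; \frac{Ln^{2/3}\bigl[f(x^{k-1})-f(x^*)\bigr]}{T\nu_1}.
\end{equation*}
Plugging in $T=\lceil 2L\tau n^{2/3}/\nu_1\rceil$ makes the prefactor at most $1/(2\tau)$, so
\begin{equation*}
\mathbb{E}\bigl[\|\nabla f(x^{k})\|^2 \,\big|\, x^{k-1}\bigr] \;\leq\; \frac{f(x^{k-1})-f(x^*)}{2\tau}.
\end{equation*}

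Next I would use the hypothesis that $f$ is $\tau$-gradient dominated, i.e.\ $f(x^{k-1})-f(x^*)\leq \tau\|\nabla f(x^{k-1})\|^2$, to convert the right-hand side from a function-value gap into a gradient-norm squared. This gives the clean one-step contraction
\begin{equation*}
\mathbb{E}\bigl[\|\nabla f(x^{k})\|^2 \,\big|\, x^{k-1}\bigr] \;\leq\; \tfrac{1}{2}\,\|\nabla f(x^{k-1})\|^2.
\end{equation*}
Taking total expectation and iterating the recursion $K$ times yields $\mathbb{E}[\|\nabla f(x^{K})\|^2]\leq 2^{-K}\|\nabla f(x^0)\|^2$, which is the claimed bound.

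I do not anticipate a major technical obstacle: the proof is essentially a reduction to Corollary~\ref{cor:nonconvex} combined with the defining inequality of gradient dominated functions. The one bookkeeping point to check carefully is the hypothesis $\tau>n^{1/3}$, which is what guarantees that $T\geq m$ (so at least one full \svrg epoch runs and $T$ can be taken as a multiple of $m$ without changing the stated constants), and which also ensures the prefactor $Ln^{2/3}/(T\nu_1)$ is indeed below $1/(2\tau)$ with the particular choice of $T$. A minor subtlety is that Corollary~\ref{cor:nonconvex} requires $T$ to be a multiple of $m$; I would handle this by taking $T$ to be the smallest multiple of $m$ at least $\lceil 2L\tau n^{2/3}/\nu_1\rceil$, which leaves the contraction factor at most $1/2$ and does not affect the statement.
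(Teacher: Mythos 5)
Your proposal is correct and follows essentially the same argument as the paper: apply Corollary~\ref{cor:nonconvex} to each inner \svrg call, substitute $T = \lceil 2L\tau n^{2/3}/\nu_1\rceil$ to get a $\tfrac{1}{2\tau}$ prefactor, convert the function-value gap to a gradient norm via $\tau$-gradient dominance, and iterate the resulting factor-$\tfrac12$ contraction. Your added remarks on the role of $\tau > n^{1/3}$ and on rounding $T$ to a multiple of $m$ are sensible bookkeeping that the paper leaves implicit.
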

In fact, for $\tau$-gradient dominated functions we can prove a stronger result of \emph{global} linear convergence.

\begin{theorem}
  \label{thm:gd-svrg-thm2}
  If $f$ is $\tau$-gradient dominated ($\tau > n^{1/3}$), then with $T = \lceil 2L\tau n^{2/3}/\nu_1 \rceil$, $m = \lfloor n/(3\mu_1) \rfloor$, $\eta_t = \mu_1/(Ln^{2/3})$ for $0 \leq t \leq m-1$ and $p_{m} = 1$ and $p_i = 0$ for all $0 \leq i < m$, the iterates of Algorithm~\ref{alg:gd-svrg} satisfy
  $$\mathbb{E}[f(x^k) - f(x^*)] \leq 2^{-k}[f(x^0) - f(x^*)].$$
  Here $\mu_1$, $\nu_1$ are as in Corollary~\ref{cor:nonconvex}; $x^*$ is an optimal solution.
\end{theorem}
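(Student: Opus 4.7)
The plan is to reduce the function-value linear convergence to the gradient-norm linear convergence already established for the inner SVRG subroutine (Corollary~\ref{cor:nonconvex}), using the defining inequality~\eqref{eq:2} of $\tau$-gradient dominated functions as the bridge between $\|\nabla f\|^2$ and $f-f(x^*)$. Since Theorem~\ref{thm:gd-svrg-thm1} already shows linear decay of the expected squared gradient norm across outer iterations, the novelty here is only a twist: instead of pushing that gradient bound forward, I would push the function-value bound forward, letting the gradient-dominance inequality convert one into the other at each outer step.

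Concretely, I would first apply Corollary~\ref{cor:nonconvex} to a single call of SVRG initialized at $x^{k-1}$ with $T = \lceil 2L\tau n^{2/3}/\nu_1\rceil$ inner iterations. This yields
\begin{equation*}
\mathbb{E}\bigl[\|\nabla f(x^k)\|^2 \,\big|\, x^{k-1}\bigr] \;\le\; \frac{L n^{2/3}\bigl[f(x^{k-1}) - f(x^*)\bigr]}{T\nu_1} \;\le\; \frac{1}{2\tau}\bigl[f(x^{k-1}) - f(x^*)\bigr],
\end{equation*}
where the second inequality uses the chosen value of $T$. Next, since $f$ is $\tau$-gradient dominated, applying~\eqref{eq:2} pointwise at the random iterate $x^k$ and then taking conditional expectation gives
\begin{equation*}
\mathbb{E}\bigl[f(x^k) - f(x^*) \,\big|\, x^{k-1}\bigr] \;\le\; \tau\,\mathbb{E}\bigl[\|\nabla f(x^k)\|^2 \,\big|\, x^{k-1}\bigr] \;\le\; \tfrac{1}{2}\bigl[f(x^{k-1}) - f(x^*)\bigr].
\end{equation*}
Taking total expectation via the tower property and iterating over $k$ outer rounds yields the claimed bound $\mathbb{E}[f(x^k)-f(x^*)] \le 2^{-k}[f(x^0)-f(x^*)]$.

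The main obstacle, such as it is, is only bookkeeping: one must verify that Corollary~\ref{cor:nonconvex}'s bound $\mathbb{E}[\|\nabla f(x_a)\|^2] \le Ln^{2/3}[f(x^{0})-f(x^*)]/(T\nu_1)$ applies conditionally when SVRG is warm-started at an arbitrary (random) point $x^{k-1}$, with $f(x^*)$ still denoting the \emph{global} minimum. This is immediate from the derivation of Corollary~\ref{cor:nonconvex}, since the proof of Theorem~\ref{thm:nonconvex-inter} only uses $f(x^*) \le f(\tilde{x}^s)$ as a lower bound and the output $x_a$ is chosen uniformly from the inner iterates of a single SVRG invocation. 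Given this, the conditional form of the gradient bound combines cleanly with~\eqref{eq:2} and the tower property to produce the geometric contraction in expectation. No new analytic ideas beyond gradient dominance and Corollary~\ref{cor:nonconvex} are required; the argument strengthens Theorem~\ref{thm:gd-svrg-thm1} from a stationarity statement to a genuine global optimization guarantee.
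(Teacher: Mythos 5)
Your proposal is correct and follows essentially the same route as the paper: apply Corollary~\ref{cor:nonconvex} with the chosen $T$ to get $\mathbb{E}[\|\nabla f(x^k)\|^2] \le \frac{1}{2\tau}\mathbb{E}[f(x^{k-1})-f(x^*)]$, then use gradient dominance to convert the left side back into $\mathbb{E}[f(x^k)-f(x^*)]/\tau$ and iterate. Your explicit handling of the conditioning and tower property is a minor (and welcome) elaboration of what the paper leaves implicit.
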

An immediate consequence is the following.
\begin{corollary}
  \label{cor:svrg-gd-oracle}
  If $f$ is $\tau$-gradient dominated, the IFO complexity of Algorithm~\ref{alg:gd-svrg} (with parameters from Theorem~\ref{thm:gd-svrg-thm1}) to compute an $\epsilon$-accurate solution is $O((n + \tau n^{2/3}) \log(1/\epsilon))$.
\end{corollary}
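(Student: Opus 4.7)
The plan is to combine the geometric contraction of Theorem~\ref{thm:gd-svrg-thm1} with a straightforward per-call IFO count of Algorithm~\ref{alg:svrg}, and then multiply the two.

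First I would determine the number of outer iterations $K$ needed. Theorem~\ref{thm:gd-svrg-thm1} guarantees $\mathbb{E}[\|\nabla f(x^k)\|^2] \le 2^{-k}\|\nabla f(x^0)\|^2$, so choosing $K = \lceil \log_2(\|\nabla f(x^0)\|^2/\epsilon) \rceil = O(\log(1/\epsilon))$ ensures $\mathbb{E}[\|\nabla f(x^K)\|^2] \le \epsilon$, i.e.\ $x^K$ is $\epsilon$-accurate under Definition~\ref{def:eps-accurate}. (The factor $\|\nabla f(x^0)\|^2$ is treated as a constant, in line with the paper's convention on $[f(x^0)-f(x^*)]$ and $\|x^0-x^*\|$.)

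Next I would tally the IFO cost of a single invocation of Algorithm~\ref{alg:svrg} with the parameters of Theorem~\ref{thm:gd-svrg-thm1}: $T=\lceil 2L\tau n^{2/3}/\nu_1\rceil$ inner stochastic iterations organized into $S=\lceil T/m\rceil$ epochs with $m=\lfloor n/(3\mu_1)\rfloor$. Each epoch uses $n$ IFO calls to compute the full gradient $g^{s+1}$ and $2m$ IFO calls inside the inner loop (two gradient evaluations per step in line~\ref{alg1:ln:update}), giving $S(n+2m)$ total IFO calls. Since $m=\Theta(n)$, this is $\Theta(S\cdot n)=\Theta(T)=O(L\tau n^{2/3}/\nu_1)$, plus an additive $n$ for the initial epoch's full gradient, yielding $O(n+\tau n^{2/3})$ per SVRG call (absorbing $L/\nu_1$ as constants consistent with the paper's complexity statements).

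Multiplying the two pieces gives a total IFO complexity of $O\bigl((n+\tau n^{2/3})\log(1/\epsilon)\bigr)$ as claimed. No step is genuinely difficult: the only care required is the bookkeeping that $S(n+m)=\Theta(T)$ when $m=\Theta(n)$, and noting that under the standing assumption $\tau>n^{1/3}$ the term $\tau n^{2/3}$ dominates $n$ anyway, so the form $n+\tau n^{2/3}$ is the natural expression covering both regimes.
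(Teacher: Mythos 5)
Your proposal is correct and is exactly the argument the paper intends: the paper states this corollary as an ``immediate consequence'' of Theorem~\ref{thm:gd-svrg-thm1} without writing out a proof, and your accounting --- $K=O(\log(1/\epsilon))$ outer calls to Algorithm~\ref{alg:svrg}, each costing $S(n+2m)=\Theta(T)+O(n)=O(n+\tau n^{2/3})$ IFO calls since $m=\Theta(n)$ --- is precisely the intended bookkeeping, with the initial quantities treated as constants per the paper's convention.
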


Note that $\gd$ can also achieve linear convergence rate for gradient dominated functions~\citep{Polyak1963}. However, $\gd$ requires $O(n +  n \tau \log(1/\epsilon))$ IFO calls to obtain an $\epsilon$-accurate solution as opposed to $O(n + n^{2/3} \tau \log(1/\epsilon))$ for \svrg. Similar (but not the same) gains can be seen for \svrg for strongly convex functions~\citep{Johnson13}. Also notice that we did not assume anything except smoothness on the \emph{individual} functions $f_i$ in the above results. In particular, the following corollary is also an immediate consequence.

\begin{corollary}
  \label{cor:svrg-gd-strongly-convex}
  If $f$ is $\lambda$-strongly convex and the functions $\{f_i\}_{i=1}^n$ are possibly nonconvex, then the number of IFO calls made by Algorithm~\ref{alg:gd-svrg} (with parameters from Theorem~\ref{thm:gd-svrg-thm1}) to compute an $\epsilon$-accurate solution is $O((n + n^{2/3} \kappa) \log(1/\epsilon))$.
\end{corollary}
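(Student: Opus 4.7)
The plan is to deduce Corollary~\ref{cor:svrg-gd-strongly-convex} as an immediate specialization of Corollary~\ref{cor:svrg-gd-oracle}, using the observation made in Section~\ref{sec:background} that any $\lambda$-strongly convex function $f$ is $\tau$-gradient dominated with $\tau = 1/(2\lambda)$. The crucial point is that this implication is a statement about $f$ alone: it never uses convexity of the individual components $f_i$. Since Algorithm~\ref{alg:gd-svrg} (and the inner Algorithm~\ref{alg:svrg} it calls) was analyzed in Theorems~\ref{thm:gd-svrg-thm1}–\ref{thm:gd-svrg-thm2} under only the blanket assumption that each $f_i$ is $L$-smooth, the hypothesis that the $f_i$ may be nonconvex introduces no obstruction whatsoever.

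Concretely, the first step is to verify that $f$ satisfies the gradient dominance inequality \eqref{eq:2} with $\tau = 1/(2\lambda)$; this follows from standard manipulation of the $\lambda$-strong convexity inequality by minimizing the right-hand side of $f(x) \ge f(y) + \langle\nabla f(y),x-y\rangle + \tfrac{\lambda}{2}\|x-y\|^2$ over $y$, which yields $f(x) - f(x^*) \le \tfrac{1}{2\lambda}\|\nabla f(x)\|^2$. The second step is to plug $\tau = 1/(2\lambda)$ into the IFO bound from Corollary~\ref{cor:svrg-gd-oracle}, obtaining
\begin{equation*}
O\bigl((n + \tau n^{2/3})\log(1/\epsilon)\bigr) \;=\; O\bigl((n + \tfrac{1}{2\lambda} n^{2/3})\log(1/\epsilon)\bigr) \;=\; O\bigl((n + \kappa n^{2/3})\log(1/\epsilon)\bigr),
\end{equation*}
where the last equality absorbs the constant $L$ into the big-$O$ via $\kappa = L/\lambda$. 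This is exactly the claimed IFO complexity.

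The only side condition to audit is the requirement $\tau > n^{1/3}$ carried over from Theorem~\ref{thm:gd-svrg-thm1}. In the strongly convex translation this becomes $\kappa > 2L n^{1/3}$, i.e.\ the ``high condition number regime'' flagged in the text just before Theorem~\ref{thm:gd-svrg-thm1}; the corollary should be understood in that regime, which is precisely where VR methods are expected to dominate $\gd$ (whose strongly convex IFO cost is $O((n + n\kappa)\log(1/\epsilon))$, worse by a factor of $n^{1/3}$ in the $\kappa$-dependent term). There is no real obstacle here — the argument is a one-line substitution — so the main thing to make explicit in the write-up is the harmlessness of nonconvex $f_i$, since every ingredient in the chain (the inner-loop bound of Theorem~\ref{thm:nonconvex-inter}, Corollary~\ref{cor:nonconvex}, Theorems~\ref{thm:gd-svrg-thm1}–\ref{thm:gd-svrg-thm2}, and Corollary~\ref{cor:svrg-gd-oracle}) was deliberately proved without any per-component convexity assumption.
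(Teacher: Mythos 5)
Your proposal is correct and matches the paper's own argument exactly: the paper likewise derives Corollary~\ref{cor:svrg-gd-strongly-convex} from Corollary~\ref{cor:svrg-gd-oracle} by noting that a $\lambda$-strongly convex $f$ is $1/(2\lambda)$-gradient dominated and that none of the intermediate results require convexity of the individual $f_i$. Your explicit audit of the side condition $\tau > n^{1/3}$ (i.e.\ the high-condition-number regime) is a welcome bit of extra care that the paper leaves implicit; just note that in your first step the minimization should be over the free variable $x$ in the strong-convexity inequality (for fixed $y$), not over $y$ --- a harmless relabeling.
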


Recall that here $\kappa$ denotes the condition number $L/\lambda$ for a $\lambda$-strongly convex function. Corollary~\ref{cor:svrg-gd-strongly-convex} follows from Corollary~\ref{cor:svrg-gd-oracle} upon noting that $\lambda$-strongly convex function is $1/2\lambda$-gradient dominated. Theorem~\ref{thm:gd-svrg-thm2} generalizes the linear convergence result in \citep{Johnson13} since it allows nonconvex $f_i$. Observe that Corollary~\ref{cor:svrg-gd-strongly-convex} also applies when $f_i$ is strongly convex for all $i \in [n]$, though in this case a more refined result can be proved~\citep{Johnson13}.

Finally, we note that our result also improves on a recent result on $\sdca$ in the setting of Corollary~\ref{cor:svrg-gd-strongly-convex} when the condition number $\kappa$ is reasonably large -- a case that typically arises in machine learning. More precisely, for $l_2$-regularized empirical loss minimization, \citet{Shwartz15} show that \sdca\ requires $O((n + \kappa^2)\log(1/\epsilon)$ iterations when the $f_i$'s are possibly nonconvex but their sum $f$ is strongly convex. In comparison, we show that Algorithm~\ref{alg:gd-svrg} requires $O((n + n^{2/3}\kappa) \log(1/\epsilon))$ iterations, which is an improvement over \sdca\ when $\kappa > n^{2/3}$.

\section{Convex Case}
\label{sec:convex}
In the previous section, we showed nonconvex $\svrg$ converges to a stationary point at the rate $O(n^{2/3}/T)$. A natural question is whether this rate can be improved if we assume convexity? We provide an affirmative answer. For non-strongly convex functions, this yields a \emph{direct} analysis (i.e., not based on strongly convex perturbations) for $\svrg$. While we state our results in terms of stationarity gap $\|\nabla f(x)\|^2$ for the ease of comparison, our analysis also provides rates with respect to the optimality gap $[f(x) - f(x^*)]$ (see the proof of Theorem~\ref{thm:convex} in the appendix).  
\begin{theorem}
  \label{thm:convex}
  If $f_i$ is convex for all $i \in [n]$, $p_i = 1/m$ for \fromto{0}{i}{m-1}, and $p_m = 0$, then for Algorithm~\ref{alg:svrg}, we have
  \small
  \begin{align*}
    &\mathbb{E}[\|\nabla f(x_a)\|^2] \leq \frac{L\|x^{0} - x^*\|^2 + 4mL^2\eta^2 [f(x^{0}) - f(x^*)]}{T\eta(1 - 4L\eta)},
  \end{align*}
  \normalsize
  where $x^*$ is optimal for~\eqref{eq:1} and $x_a$ is the output of Algorithm~\ref{alg:svrg}.
\end{theorem}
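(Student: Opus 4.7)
The plan is to exploit convexity of each $f_i$ to get a much tighter variance bound on $v_t^{s+1}$ than was available in the purely nonconvex analysis, and then close the recursion through both the inner loop and the outer loop before finally converting an optimality-gap bound into the stated stationarity-gap bound via smoothness.

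First I would write the standard distance-to-optimum one-step inequality. Using $x_{t+1}^{s+1} = x_t^{s+1} - \eta v_t^{s+1}$ and the unbiasedness $\mathbb{E}[v_t^{s+1}\mid x_t^{s+1}] = \nabla f(x_t^{s+1})$, expanding $\|x_{t+1}^{s+1} - x^*\|^2$ yields a $-2\eta\langle \nabla f(x_t^{s+1}),\,x_t^{s+1}-x^*\rangle$ term, which by convexity of $f$ (a consequence of convexity of the $f_i$) is at most $-2\eta[f(x_t^{s+1}) - f(x^*)]$. The only remaining piece is $\eta^2 \mathbb{E}\|v_t^{s+1}\|^2$. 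Here I would invoke the classical Johnson--Zhang style variance bound: splitting $v_t^{s+1}$ around $\nabla f_{i_t}(x^*)$ and using $L$-smoothness together with convexity of each $f_i$, one obtains
\begin{equation*}
\mathbb{E}\|v_t^{s+1}\|^2 \;\le\; 4L\bigl[f(x_t^{s+1}) - f(x^*)\bigr] \;+\; 4L\bigl[f(\tilde{x}^{s}) - f(x^*)\bigr].
\end{equation*}

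Next I would sum this one-step inequality over $t=0,\ldots,m-1$ within an epoch and then over $s=0,\ldots,S-1$ across epochs. The distance terms telescope (using $x_0^{s+1}=x_m^{s}$), leaving $\|x^0-x^*\|^2$ on the right. Writing $A := \sum_{s,t}\mathbb{E}[f(x_t^{s+1}) - f(x^*)]$, the summed inequality becomes
\begin{equation*}
2\eta\,A \;\le\; \|x^0-x^*\|^2 \;+\; 4L\eta^2 A \;+\; 4mL\eta^2 \sum_{s=0}^{S-1}\mathbb{E}[f(\tilde{x}^s)-f(x^*)].
\end{equation*}
The crucial trick is to control $\sum_{s\ge 1}\mathbb{E}[f(\tilde{x}^s)-f(x^*)]$ using the definition $\tilde{x}^{s+1} = \frac{1}{m}\sum_{i=0}^{m-1} x_i^{s+1}$ (this is exactly where $p_i=1/m$ for $i<m$ and $p_m=0$ enters), so by Jensen/convexity $\mathbb{E}[f(\tilde{x}^{s+1}) - f(x^*)] \le \frac{1}{m}\sum_t \mathbb{E}[f(x_t^{s+1}) - f(x^*)]$. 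This bounds the epoch-snapshot sum by $A/m$, and only the initial term $f(\tilde{x}^0)-f(x^*) = f(x^0)-f(x^*)$ is left on the right-hand side. Rearranging yields
\begin{equation*}
2\eta(1-4L\eta)\,A \;\le\; \|x^0 - x^*\|^2 + 4mL\eta^2\bigl[f(x^0) - f(x^*)\bigr].
\end{equation*}

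Finally, since $x_a$ is drawn uniformly at random from all $T$ inner iterates $\{x_t^{s+1}\}$, we have $A = T\,\mathbb{E}[f(x_a) - f(x^*)]$, and $L$-smoothness gives the standard inequality $\|\nabla f(x)\|^2 \le 2L[f(x)-f(x^*)]$ (valid for any convex $L$-smooth $f$). Chaining these two observations produces exactly the stated bound. The main obstacle I expect is keeping the constants clean so that the $4L\eta^2 A$ absorption into the $2\eta A$ term leaves the $(1-4L\eta)$ factor precisely, and verifying that the telescoping across epochs works with the boundary term $f(x^0)-f(x^*)$ appearing only once; the choice $p_m=0$ is essential for this, since otherwise the snapshot $\tilde{x}^{s+1}$ would not be a convex combination of only the iterates appearing in $A$.
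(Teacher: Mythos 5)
Your proposal is correct and follows essentially the same route as the paper's proof: the distance-to-optimum recursion with the Johnson--Zhang convex variance bound $\mathbb{E}\|v_t^{s+1}\|^2 \le 4L[f(x_t^{s+1})-f(x^*)+f(\tilde{x}^s)-f(x^*)]$, control of the snapshot terms via $p_i=1/m$, $p_m=0$ and convexity of $f$, and the final conversion $\|\nabla f(x)\|^2\le 2L[f(x)-f(x^*)]$. The only difference is bookkeeping---the paper telescopes a per-epoch Lyapunov function $P^s=\mathbb{E}[\|x^s_m-x^*\|^2]+4mL\eta^2\mathbb{E}[f(\tilde{x}^s)-f(x^*)]$ while you sum over all iterates at once---and your explicit invocation of Jensen's inequality for $f(\tilde{x}^{s+1})$ is in fact slightly more careful than the paper's statement of that step as an equality.
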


We now state corollaries of this theorem that explicitly show the dependence on $n$ in the convergence rates.

\begin{corollary}
  \label{cor:convex-non-const}
  If $m = n$ and $\eta = 1/(8L\sqrt{n})$ in Theorem~\ref{thm:convex}, then we have the following bound:
  \small
  \begin{align*}
    &\mathbb{E}[\|\nabla f(x_a)\|^2] \leq \frac{L\sqrt{n}(16L\|x^{0} - x^*\|^2 + [f(x^{0}) - f(x^*)])}{T},
  \end{align*}
  \normalsize
  where $x^*$ is optimal for~\eqref{eq:1} and $x_a$ is the output of Algorithm~\ref{alg:svrg}.
\end{corollary}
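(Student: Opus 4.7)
The corollary is a direct plug-in into Theorem~\ref{thm:convex}, so the plan is essentially bookkeeping: substitute $m = n$ and $\eta = 1/(8L\sqrt{n})$ into
\[
\mathbb{E}[\|\nabla f(x_a)\|^2] \leq \frac{L\|x^{0} - x^*\|^2 + 4mL^2\eta^2 [f(x^{0}) - f(x^*)]}{T\eta(1 - 4L\eta)},
\]
and simplify. I would handle three quantities in turn: the factor $1 - 4L\eta$ in the denominator, the coefficient $4mL^2\eta^2$ multiplying $[f(x^0) - f(x^*)]$, and the overall prefactor $1/[T\eta(1-4L\eta)]$.

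First, with $\eta = 1/(8L\sqrt{n})$ I would note $4L\eta = 1/(2\sqrt{n}) \leq 1/2$ for all $n \geq 1$, so $1 - 4L\eta \geq 1/2$. This lower-bounds the denominator by $T\eta/2 = T/(16L\sqrt{n})$, and thus upper-bounds the prefactor by $16L\sqrt{n}/T$. Next, using $m = n$ I would compute
\[
4mL^2\eta^2 = 4n L^2 \cdot \frac{1}{64 L^2 n} = \frac{1}{16},
\]
so the numerator simplifies to $L\|x^0 - x^*\|^2 + \tfrac{1}{16}[f(x^0) - f(x^*)]$. Multiplying by the prefactor and pulling the $1/16$ out gives
\[
\mathbb{E}[\|\nabla f(x_a)\|^2] \leq \frac{L\sqrt{n}\bigl(16L\|x^{0} - x^*\|^2 + [f(x^{0}) - f(x^*)]\bigr)}{T},
\]
which is exactly the claimed bound.

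There is no genuine obstacle here; everything reduces to constant-tracking. The only mildly delicate point is making sure $1 - 4L\eta$ stays bounded away from zero, which is why $\eta$ is chosen proportional to $1/\sqrt{n}$ rather than, say, $1/n$: the chosen scale balances the two competing terms in the numerator (one growing with $m$ through $4mL^2\eta^2$, the other fixed) while keeping $4L\eta \leq 1/2$ for all $n \geq 1$.
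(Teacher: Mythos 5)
Your computation is correct and matches what the paper intends: Corollary~\ref{cor:convex-non-const} is a direct substitution of $m=n$ and $\eta = 1/(8L\sqrt{n})$ into Theorem~\ref{thm:convex}, using $4L\eta = 1/(2\sqrt{n}) \leq 1/2$ to bound the denominator and $4mL^2\eta^2 = 1/16$ to simplify the numerator. The paper gives no separate proof for this corollary, so your constant-tracking is exactly the intended argument.
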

The above result uses a step size that depends on $n$. For the convex case, we can also use step sizes independent of $n$. The following corollary states the associated result.

\begin{corollary}
  \label{cor:convex-const}
  If $m = n$ and $\eta = 1/(8L)$ in Theorem~\ref{thm:convex}, then we have the following bound:
  \small
  \begin{align*}
    &\mathbb{E}[\|\nabla f(x_a)\|^2] \leq \frac{L(16L\|x^{0} - x^*\|^2 + n [f(x^{0}) - f(x^*)])}{T},
  \end{align*}
  \normalsize
  where $x^*$ is optimal for~\eqref{eq:1} and $x_a$ is the output of Algorithm~\ref{alg:svrg}.
\end{corollary}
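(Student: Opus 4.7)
The plan is straightforward: Corollary~\ref{cor:convex-const} is a direct specialization of Theorem~\ref{thm:convex} obtained by substituting $m = n$ and $\eta = 1/(8L)$ into the bound
\[
\mathbb{E}[\|\nabla f(x_a)\|^2] \leq \frac{L\|x^{0} - x^*\|^2 + 4mL^2\eta^2 [f(x^{0}) - f(x^*)]}{T\eta(1 - 4L\eta)},
\]
and simplifying. There is no new analytic content to establish; I only need to check that the step size $\eta = 1/(8L)$ satisfies the implicit requirement $\eta < 1/(4L)$ that keeps the denominator positive, which it plainly does since $4L\eta = 1/2$.

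The computation proceeds in two short steps. First, evaluate the denominator: with $\eta = 1/(8L)$, we have $1 - 4L\eta = 1/2$, so $T\eta(1 - 4L\eta) = T/(16L)$. Second, evaluate the variance-type coefficient in the numerator: $4mL^2\eta^2 = 4nL^2 \cdot 1/(64L^2) = n/16$. Plugging both into the bound yields
\[
\mathbb{E}[\|\nabla f(x_a)\|^2] \leq \frac{L\|x^{0} - x^*\|^2 + (n/16)[f(x^{0}) - f(x^*)]}{T/(16L)} = \frac{L\bigl(16L\|x^{0} - x^*\|^2 + n[f(x^{0}) - f(x^*)]\bigr)}{T},
\]
which matches the claim.

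Since every step is mechanical substitution, there is no real obstacle. The only thing worth highlighting is the qualitative contrast with Corollary~\ref{cor:convex-non-const}: by choosing $\eta$ independent of $n$, we trade the $\sqrt{n}$ factor multiplying $\|x^0 - x^*\|^2$ for a linear-in-$n$ factor multiplying $f(x^0) - f(x^*)$. This trade-off is visible directly in the numerator of Theorem~\ref{thm:convex}, where $4mL^2\eta^2[f(x^0) - f(x^*)]$ grows with $m = n$ when $\eta$ is held fixed.
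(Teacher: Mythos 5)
Your proof is correct and is exactly the intended argument: the paper states this corollary as a direct substitution of $m=n$ and $\eta = 1/(8L)$ into the bound of Theorem~\ref{thm:convex}, and your arithmetic ($1-4L\eta = 1/2$, $4mL^2\eta^2 = n/16$, denominator $T/(16L)$) checks out and reproduces the claimed bound.
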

We can rewrite these corollaries in terms of IFO complexity to get the following corollaries.
\begin{corollary}
  \label{cor:svrg-convex-oracle-non-const}
  If $f_i$ is convex for all $i \in [n]$, then the IFO complexity of Algorithm~\ref{alg:svrg} (with parameters from Corollary~\ref{cor:convex-non-const}) to compute an $\epsilon$-accurate solution is $O(n + (\sqrt{n}/\epsilon))$.
\end{corollary}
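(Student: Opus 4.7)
The plan is to simply translate the iteration-count bound of Corollary~\ref{cor:convex-non-const} into a bound on IFO calls by accounting carefully for the per-epoch oracle cost of Algorithm~\ref{alg:svrg}. There is essentially no new analytic work: the convergence rate has already been established, so everything reduces to bookkeeping plus solving a single inequality for $T$.

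First, I would invoke Corollary~\ref{cor:convex-non-const} to get
\[
  \mathbb{E}[\|\nabla f(x_a)\|^2] \;\leq\; \frac{L\sqrt{n}\bigl(16L\|x^{0}-x^*\|^2 + [f(x^0) - f(x^*)]\bigr)}{T}.
\]
Following the convention in the paper that $\|x^0-x^*\|$ and $f(x^0)-f(x^*)$ are treated as constants, the numerator is $O(L\sqrt{n})$. To guarantee an $\epsilon$-accurate output (in the sense of Definition~\ref{def:eps-accurate}), it therefore suffices to choose
\[
  T \;=\; \Bigl\lceil \tfrac{L\sqrt{n}\bigl(16L\|x^0-x^*\|^2 + [f(x^0)-f(x^*)]\bigr)}{\epsilon}\Bigr\rceil \;=\; O\!\bigl(\sqrt{n}/\epsilon\bigr).
\]

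Next, I would count IFO calls per epoch of Algorithm~\ref{alg:svrg}. Each epoch uses $n$ IFO calls to compute the reference gradient $g^{s+1}$ and $2m$ calls inside the inner loop (one for $\nabla f_{i_t}(x_t^{s+1})$ and one for $\nabla f_{i_t}(\tilde x^s)$), for a total of $n + 2m$ oracle calls per epoch. With the choice $m = n$ from Corollary~\ref{cor:convex-non-const}, this is $3n$ per epoch, and over $S = \lceil T/m \rceil = \lceil T/n \rceil$ epochs the total cost is
\[
  S\cdot(n + 2m) \;=\; \lceil T/n \rceil \cdot 3n \;=\; O(n + T).
\]
The additive $n$ accounts for the fact that even if $T$ is smaller than $n$, at least one full epoch must be executed (so at least one full gradient pass is performed).

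Substituting $T = O(\sqrt{n}/\epsilon)$ yields total IFO complexity $O\bigl(n + \sqrt{n}/\epsilon\bigr)$, which is exactly the claimed bound. There is no real obstacle: the only subtlety is to remember that the $n$ cost of the initial (and every subsequent) full gradient computation is amortized precisely because $m$ is chosen as $\Theta(n)$, ensuring the stochastic and batch costs are balanced. \qed
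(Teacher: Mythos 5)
Your proposal is correct and follows essentially the same route as the paper, which disposes of this corollary with the one-line observation that the rate from Corollary~\ref{cor:convex-non-const} forces $T = O(\sqrt{n}/\epsilon)$ and that with $m = O(n)$ each epoch costs $O(n)$ IFO calls. Your accounting of the $n + 2m$ per-epoch cost and the additive $n$ for the mandatory first full-gradient pass is just a more explicit version of the same bookkeeping.
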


\begin{corollary}
  \label{cor:svrg-convex-oracle-const}
  If $f_i$ is convex for all $i \in [n]$, then the IFO complexity of Algorithm~\ref{alg:svrg} (with parameters from Corollary~\ref{cor:convex-const}) to compute $\epsilon$-accurate solution is $O(n/\epsilon)$.
\end{corollary}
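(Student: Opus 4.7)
The proof is essentially a bookkeeping exercise on top of Corollary~\ref{cor:convex-const}, so my plan is to (i) invert the bound to read off the number of inner iterations $T$ needed for $\epsilon$-accuracy, and (ii) translate $T$ into IFO calls using the structure of Algorithm~\ref{alg:svrg}.

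Concretely, Corollary~\ref{cor:convex-const} (with $m = n$ and $\eta = 1/(8L)$) gives
\[
\mathbb{E}[\|\nabla f(x_a)\|^2] \;\leq\; \frac{L\bigl(16L\|x^0 - x^*\|^2 + n[f(x^0) - f(x^*)]\bigr)}{T}.
\]
Treating $L$, $\|x^0 - x^*\|^2$, and $[f(x^0) - f(x^*)]$ as constants (as is done throughout the paper for clean comparison), the right-hand side is $O(n/T)$. Thus requiring $\mathbb{E}[\|\nabla f(x_a)\|^2] \leq \epsilon$ is implied by choosing $T = O(n/\epsilon)$.

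Next I would count IFO calls. With $m = n$, Algorithm~\ref{alg:svrg} runs $S = \lceil T/m \rceil = \lceil T/n \rceil$ epochs; each epoch performs $n$ IFO calls to compute the full gradient $g^{s+1}$ at $\tilde{x}^s$, followed by $m = n$ inner iterations, each of which uses $O(1)$ IFO calls (two gradient evaluations at $x_t^{s+1}$ and $\tilde{x}^s$ for the index $i_t$). Hence the IFO cost per epoch is $\Theta(n)$, and the total IFO cost is
\[
S \cdot \Theta(n) \;=\; \Theta(n) \cdot \lceil T/n \rceil \;=\; \Theta(T + n) \;=\; O(n/\epsilon),
\]
where the last equality uses $T = O(n/\epsilon)$ and absorbs the $n$ term (which is dominated once $\epsilon < 1$, or else is subsumed as part of the $O(n/\epsilon)$ bound).

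There is no real obstacle here: the only minor point to be careful about is ensuring that the constant factors hidden in Corollary~\ref{cor:convex-const} are indeed $n$-independent (they are, since $\eta = 1/(8L)$ was chosen to be independent of $n$), so that solving for $T$ produces the clean $O(n/\epsilon)$ bound and not something with an extra factor of $n$. Once that is verified, the IFO accounting above yields the claim.
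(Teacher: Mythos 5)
Your argument is correct and matches the paper's own (very brief) justification: invert the $O(n/T)$ bound from Corollary~\ref{cor:convex-const} to get $T = O(n/\epsilon)$, and note that with $m = n$ each epoch costs $\Theta(n)$ IFO calls, so the total is $\Theta(n + T) = O(n/\epsilon)$. Your accounting is just a more explicit version of the one-line remark the paper gives after stating the corollary.
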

These results follow from Corollary~\ref{cor:convex-non-const} and Corollary~\ref{cor:convex-const} and noting that for $m = O(n)$ the total IFO calls made by Algorithm~\ref{alg:svrg} is $O(n)$. It is instructive to quantitatively compare Corollary~\ref{cor:svrg-convex-oracle-non-const} and Corollary\ref{cor:svrg-convex-oracle-const}. With a step size independent of $n$, the convergence rate of $\svrg$ has a dependence that is in the order of $n$ (Corollary~\ref{cor:convex-const}). But this dependence can be reduced to $\sqrt{n}$ by either carefully selecting a step size that diminishes with $n$ (Corollary~\ref{cor:convex-non-const}) or by using a good initial point $x^0$ obtained by, say, running $O(n)$ iterations of $\sgd$. 

We emphasize that the convergence rate for convex case can be improved significantly by slightly modifying the algorithm (either by adding an appropriate strongly convex perturbation \citep{Xiao14} or by using a choice of $m$ that changes with epoch \citep{Zhu15}). However, it is not clear if these strategies provide any theoretical gains for the general nonconvex case.

\section{Mini-batch Nonconvex SVRG}
\label{sec:minibatch}
In this section, we study the mini-batch version of Algorithm~\ref{alg:svrg}. Mini-batching is a popular strategy, especially in multicore and distributed settings as it greatly helps one exploit parallelism and reduce the communication costs. The pseudocode for mini-batch nonconvex $\svrg$ (Algorithm~\ref{alg:minibatch-svrg}) is provided in the supplement due to lack of space. The key difference between the mini-batch \svrg and Algorithm~\ref{alg:svrg} lies in lines 6 to 8. To use mini-batches we replace line 6 with sampling (with replacement) a mini-batch $I_t \subset [n]$ of size $b$; lines 7 to 8 are replaced with the following updates:
\begin{align*}
  u_t^{s+1} &=  \tfrac{1}{|I_t|} \nlsum_{i_t \in I_t} \left(\nabla f_{i_t}(x^{s+1}_t) - \nabla f_{i_t}(\tilde{x}^{s})\right) + g^{s+1},
\\
x_{t+1}^{s+1} &= x^{s+1}_t - \eta_t u_t^{s+1}
\end{align*}
When $b=1$, this reduces to Algorithm~\ref{alg:svrg}. Mini-batch is typically used to reduce the variance of the stochastic gradient and increase the parallelism. Lemma~\ref{lem:nonconvex-minibatch-variance-lemma} (in Section~\ref{sec:key-lemmas} of the appendix) shows the reduction in the variance of stochastic gradients with mini-batch size $b$. Using this lemma, one can derive the mini-batch equivalents of Lemma~\ref{lem:nonconvex-svrg}, Theorem~\ref{thm:nonconvex-inter} and Theorem~\ref{thm:nonconvex-gen}. However, for the sake of brevity, we directly state the following main result for mini-batch $\svrg$.



\begin{theorem}
 Let $\overline{\gamma}_n$ denote the following quantity:
  \begin{align*}
    \overline{\gamma}_n := \min_{0 \leq t \leq m-1}\quad\bigl(\eta - \tfrac{\overline{c}_{t+1}\eta}{\beta} - \eta^2L - 2\overline{c}_{t+1}\eta^2\bigr).
  \end{align*}
  where $\overline{c}_m = 0$, $\overline{c}_{t} = \overline{c}_{t+1}(1 + \eta\beta + \nicefrac{2\eta^2L^2}{b} ) +  \nicefrac{\eta_t^2L^3}{b}$ for \fromto{0}{t}{m-1}. Suppose $\eta = \mu_2b/(Ln^{2/3})$ ($0 < \mu_2 < 1$), $\beta = L/n^{1/3}$, $m = \lfloor n/(3b\mu_2) \rfloor$ and $T$ is some multiple of $m$. Then for the mini-batch version of Algorithm~\ref{alg:svrg} with mini-batch size $b < n^{2/3}$, there exists universal constants $\mu_2, \nu_2 > 0$ such that we have the following: $\overline{\gamma}_n \geq \frac{\nu_2b}{Ln^{2/3}}$ and
\begin{align*}
\mathbb{E}[\|\nabla f(x_a)\|^2] &\leq \frac{Ln^{2/3} [f(x^{0}) - f(x^*)]}{bT\nu_2},
\end{align*} 
where $x^*$ is optimal for~\eqref{eq:1}.
\label{thm:nonconvex-minibatch}
\end{theorem}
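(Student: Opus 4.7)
The plan is to mirror the single-sample proof of Theorem~\ref{thm:nonconvex-gen} step by step, modifying only the places where the stochastic gradient variance enters. The essential observation is that, because the mini-batch estimator $u_t^{s+1}$ averages $b$ i.i.d.\ samples, its conditional variance is reduced by a factor of $b$ compared to the single-sample case. Concretely, the analog of the variance lemma (the paper's Lemma~\ref{lem:nonconvex-minibatch-variance-lemma}) gives
\[
 \mathbb{E}\bigl[\|u_t^{s+1} - \nabla f(x_t^{s+1})\|^2\bigr] \;\le\; \tfrac{L^2}{b}\,\mathbb{E}\bigl[\|x_t^{s+1} - \tilde{x}^s\|^2\bigr],
\]
and this $1/b$ factor propagates into every appearance of $L^2$ coming from the variance in the descent computation.

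First I would redo the one-step descent inequality: starting from $L$-smoothness applied to $x_{t+1}^{s+1} = x_t^{s+1} - \eta u_t^{s+1}$, take conditional expectations, use unbiasedness $\mathbb{E}[u_t^{s+1}] = \nabla f(x_t^{s+1})$, and plug in the mini-batch variance bound above. Then I would introduce the Lyapunov function $R_t^{s+1} := \mathbb{E}[f(x_t^{s+1}) + \overline{c}_t\|x_t^{s+1} - \tilde{x}^s\|^2]$ and expand $\|x_{t+1}^{s+1} - \tilde{x}^s\|^2$ using Young's inequality with parameter $\beta$. This yields exactly the recursion $\overline{c}_t = \overline{c}_{t+1}(1 + \eta\beta + 2\eta^2L^2/b) + \eta^2L^3/b$ and the descent $R_{t+1}^{s+1} \le R_t^{s+1} - \overline{\gamma}_n\,\mathbb{E}[\|\nabla f(x_t^{s+1})\|^2]$. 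Telescoping over $t = 0,\dots,m-1$ and then over all epochs (using $\overline{c}_m = 0$ and $x^{s+1}_0 = \tilde{x}^s$, so $R_0^{s+1} = \mathbb{E}[f(\tilde{x}^s)]$) produces the bound $\mathbb{E}[\|\nabla f(x_a)\|^2] \le (f(x^0) - f(x^*))/(T\overline{\gamma}_n)$, which is the mini-batch analog of Theorem~\ref{thm:nonconvex-inter}.

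The second and more delicate step is to show $\overline{\gamma}_n \ge \nu_2 b/(Ln^{2/3})$ under the stated parameter choice $\eta = \mu_2 b/(Ln^{2/3})$, $\beta = L/n^{1/3}$, $m = \lfloor n/(3b\mu_2)\rfloor$. Here I plug in: $\eta\beta = \mu_2 b/n$ and $2\eta^2 L^2/b = 2\mu_2^2 b/n^{4/3}$, so for $b < n^{2/3}$ the per-step inflation factor satisfies $1 + \eta\beta + 2\eta^2 L^2/b \le 1 + C\mu_2 b/n$. Iterating $m \le n/(3b\mu_2)$ times, this factor raised to the $m$-th power stays bounded by a universal constant $\theta$ (by the standard $(1+x/m)^m \le e^x$ estimate). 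The additive term $\eta^2 L^3/b = \mu_2^2 Lb/n^{4/3}$ accumulates as a geometric sum of length $m$, giving $\overline{c}_0 \le \theta\cdot m\cdot \mu_2^2 Lb/n^{4/3} = O(\mu_2 L/n^{1/3})$. Substituting into
\[
 \overline{\gamma}_n \;\ge\; \eta\Bigl(1 - \tfrac{\overline{c}_0}{\eta\beta} - \eta L - 2\overline{c}_0\eta\Bigr)
\]
shows that the bracketed factor is bounded below by a positive universal constant once $\mu_2$ is taken sufficiently small, yielding $\overline{\gamma}_n \ge \nu_2 b/(Ln^{2/3})$. Combining with the previous paragraph gives the stated rate.

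The main obstacle is the bookkeeping in the recursion for $\overline{c}_t$: one must carefully verify that the choice $m = \lfloor n/(3b\mu_2)\rfloor$ still produces a bounded geometric-factor even though $\eta$, $m$, and the per-step ratio all scale with $b$, so that $\nu_2$ and $\mu_2$ can be chosen as \emph{universal} constants independent of $b$ and $n$ in the regime $b < n^{2/3}$. The constraint $b < n^{2/3}$ is exactly what guarantees $2\eta^2 L^2/b \le \eta\beta$, which keeps the recursion well-behaved; if $b \ge n^{2/3}$ the variance term would dominate and the same argument would no longer yield linear speedup.
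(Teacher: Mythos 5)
Your proposal is correct and follows essentially the same route as the paper's proof: the mini-batch variance lemma with the $1/b$ factor, the Lyapunov recursion defining $\overline{c}_t$, telescoping within and across epochs to get $\mathbb{E}[\|\nabla f(x_a)\|^2]\le (f(x^0)-f(x^*))/(T\overline{\gamma}_n)$, and then the geometric-sum bound $\overline{c}_0=O(\mu_2 L/n^{1/3})$ to establish $\overline{\gamma}_n\ge \nu_2 b/(Ln^{2/3})$. Two small slips worth fixing: after factoring out $\eta$, the bracket should read $1-\overline{c}_0/\beta-\eta L-2\overline{c}_0\eta$ rather than containing $\overline{c}_0/(\eta\beta)$ (as written, that term would be unbounded); and the condition $b<n^{2/3}$ is not what ensures $2\eta^2L^2/b\le\eta\beta$ (their ratio is $2\mu_2/n^{1/3}$, small for every $b$) --- it is instead what guarantees $\eta L=\mu_2 b/n^{2/3}\le\mu_2$, so that the $\eta^2L$ term in $\overline{\gamma}_n$ remains a small constant fraction of $\eta$.
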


It is important to compare this result with mini-batched $\sgd$. For a batch size of $b$, $\sgd$ obtains a rate of $O(1/\sqrt{bT})$ \cite{Dekel2012} (obtainable by a simple modification of Theorem~\ref{thm:sgd-conv}). Specifically, $\sgd$ has a $1/\sqrt{b}$ dependence on the batch size. In contrast, Theorem~\ref{thm:nonconvex-minibatch} shows that $\svrg$ has a much better dependence of $1/b$ on the batch size. Hence, compared to \sgd, \svrg allows more efficient mini-batching. More formally, in terms of IFO queries we have the following result. 

\begin{corollary}
\label{cor:svrg-nonconvex-oracle-minibatch}
If $f \in \Fc_n$, then the IFO complexity of the mini-batch version of Algorithm~\ref{alg:svrg} (with parameters from Theorem~\ref{thm:nonconvex-minibatch} and mini-batch size $b < n^{2/3}$) to obtain an $\epsilon$-accurate solution is $O(n + (n^{2/3}/\epsilon))$.
\end{corollary}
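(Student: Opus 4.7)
\textbf{Proof proposal for Corollary~\ref{cor:svrg-nonconvex-oracle-minibatch}.}

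The plan is to convert the inner-iteration bound from Theorem~\ref{thm:nonconvex-minibatch} into a count of IFO queries, using the specific choices of $m$ and $\eta$ dictated there. First I would use Theorem~\ref{thm:nonconvex-minibatch} to determine how many inner iterations $T$ suffice. Setting the right-hand side $Ln^{2/3}[f(x^0)-f(x^*)]/(bT\nu_2)$ to $\epsilon$ and solving for $T$ yields
\begin{equation*}
  T \;=\; O\!\left(\frac{n^{2/3}}{b\epsilon}\right),
\end{equation*}
where the constants $L$, $\nu_2$, and $f(x^0)-f(x^*)$ are treated as absolute constants (consistent with the rest of the paper).

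Next I would tally the IFO cost. Each epoch of mini-batch \svrg incurs $n$ IFO queries to compute the anchor full gradient $g^{s+1}$, plus $bm$ queries across the $m$ inner iterations (each inner step samples a fresh mini-batch of size $b$, and the reference evaluations at $\tilde{x}^s$ cost a further $b$ queries each; both together are $O(b)$ per inner step). With the prescribed $m=\lfloor n/(3b\mu_2)\rfloor$, we have $bm = O(n)$, so the per-epoch IFO cost is $O(n)$. The total number of epochs is $S=T/m$, so the overall IFO count is
\begin{equation*}
  S\cdot O(n) \;=\; \frac{T}{m}\cdot O(n) \;=\; O(Tb),
\end{equation*}
since $n/m = O(b)$. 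Substituting the value of $T$ from the previous paragraph gives $O(Tb)=O(n^{2/3}/\epsilon)$. Adding the $O(n)$ for the initial epoch's anchor gradient (or equivalently, handling the case where the number of epochs is small) yields the claimed $O(n+n^{2/3}/\epsilon)$ IFO complexity.

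There is essentially no obstacle here: the corollary is a routine bookkeeping consequence of Theorem~\ref{thm:nonconvex-minibatch} together with the structure of Algorithm~\ref{alg:svrg}'s epochs. The only subtlety worth flagging is ensuring that $T$ is chosen as a multiple of $m$ (required by the theorem); rounding $T$ up to the next multiple of $m$ only adds at most one extra epoch, i.e.\ an additive $O(n)$, which is already absorbed into the stated bound. The constraint $b<n^{2/3}$ from Theorem~\ref{thm:nonconvex-minibatch} is what guarantees $m\ge 1$ and keeps the per-epoch cost dominated by the $n$ full-gradient queries rather than the $bm$ stochastic queries, so the final bound indeed matches the single-sample case and reflects the promised linear speedup in $b$.
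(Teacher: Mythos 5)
Your proposal is correct and follows essentially the same accounting the paper uses (explicitly for Corollary~\ref{cor:svrg-nonconvex-oracle-gen} and implicitly for this one): solve $Ln^{2/3}[f(x^0)-f(x^*)]/(bT\nu_2)\le\epsilon$ for $T=O(n^{2/3}/(b\epsilon))$, note each epoch costs $n+O(bm)=O(n)$ IFO calls with $m=\lfloor n/(3b\mu_2)\rfloor$, and multiply by $S=T/m$ to get $O(Tb)=O(n^{2/3}/\epsilon)$ plus the additive $O(n)$. The only nitpick is that $bm$ is always $\Theta(n)$ by the choice of $m$ (it is not that $b<n^{2/3}$ makes it smaller than $n$); that hypothesis is needed for the bound on $\overline{\gamma}_n$ in Theorem~\ref{thm:nonconvex-minibatch} and to keep $m\ge 1$, but this does not affect your conclusion.
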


Corollary~\ref{cor:svrg-nonconvex-oracle-minibatch} shows an interesting property of mini-batch $\svrg$. First, note that $b$ IFO calls are required for calculating the gradient on a mini-batch of size $b$. Hence, $\svrg$ does not gain on IFO complexity by using mini-batches. However, if the $b$ gradients are calculated in parallel, then this leads to a theoretical linear speedup in multicore and distributed settings. In contrast, $\sgd$ does not yield an efficient mini-batch strategy as it requires $O(b^{1/2}/\epsilon^2)$ IFO calls for achieving an $\epsilon$-accurate solution \citep{Likdd2014}. Thus, the performance of $\sgd$ degrades with mini-batching.

\section{Comparison of the convergence rates}
\label{sec:comparison}
In this section, we give a comprehensive comparison of results obtained in this paper. In particular, we compare key aspects of the convergence rates for $\sgd$, $\gd$, and $\svrg$. The comparison is based on IFO complexity to achieve an $\epsilon$-accurate solution.

\textbf{Dependence on $n$}: The number of IFO calls of $\svrg$ and $\gd$ depend explicitly on $n$. In contrast, the number of oracle calls of $\sgd$ is independent of $n$ (Theorem~\ref{thm:sgd-conv}). However, this comes at the expense of worse dependence on $\epsilon$. The number of IFO calls in $\gd$ is proportional to $n$. But for \svrg this dependence reduces to $n^{1/2}$ for convex (Corollary~\ref{cor:convex-non-const}) and $n^{2/3}$ for nonconvex (Corollary~\ref{cor:nonconvex}) problems. Whether this difference in dependence on $n$ is due to nonconvexity or just an artifact of our analysis is an interesting open problem. 

\textbf{Dependence on $\epsilon$}: The dependence on $\epsilon$ (or alternatively $T$) follows from the convergence rates of the algorithms. $\sgd$ is seen to depend as $O(1/\epsilon^2)$ on $\epsilon$, regardless of convexity or nonconvexity. In contrast, for both convex and nonconvex settings, \svrg and \gd converge as $O(1/\epsilon)$. Furthermore, for gradient dominated functions, $\svrg$ and $\gd$ have global linear convergence. This speedup in convergence over \sgd is especially significant when medium to high accuracy solutions are required (i.e., $\epsilon$ is small).

\textbf{Assumptions used in analysis}: It is important to understand the assumptions used in deriving the convergence rates. All algorithms assume Lipschitz continuous gradients. However, $\sgd$ requires two additional subtle but important assumptions:  $\sigma$-bounded gradients and advance knowledge of $T$ (since its step sizes depend on $T$). On the other hand, both \svrg and \gd do not require these assumptions, and thus, are more flexible. 

\textbf{Step size / learning rates}: It is valuable to compare the step sizes used by the algorithms. The step sizes of \sgd shrink as the number of \emph{iterations} $T$ increases---an undesirable property. On the other hand, the step sizes of \svrg and \gd are independent of $T$. Hence, both these algorithms can be executed with a fixed step size. However, \svrg uses step sizes that depend on $n$ (see Corollary~\ref{cor:nonconvex} and Corollary~\ref{cor:convex-non-const}). A step size independent of $n$ can be used for $\svrg$ for convex $f$, albeit at cost of worse dependence on $n$ (Corollary~\ref{cor:convex-const}). $\gd$ does not have this issue as its  step size is independent of both $n$ and $T$.

\textbf{Dependence on initial point and mini-batch}: \svrg is more sensitive to the initial point in comparison to \sgd. This can be seen by comparing Corollary~\ref{cor:nonconvex} (of \svrg) to Theorem~\ref{thm:sgd-conv} (of \sgd). Hence, it is important to use a good initial point for $\svrg$. Similarly, a good mini-batch can be beneficial to \svrg. Moreover, mini-batches not only provides parallelism but also good theoretical guarantees (see Theorem~\ref{thm:nonconvex-minibatch}). In contrast, the performance gain in \sgd with mini-batches is not very pronounced (see Section~\ref{sec:minibatch}).

\section{Best of two worlds}
\label{sec:best.two}
We have seen in the previous section that \svrg combines the benefits of both \gd and \sgd. We now show that these benefits of $\svrg$ can be made more pronounced by an appropriate step size under additional assumptions.
In this case, the IFO complexity of \svrg is lower than those of \sgd and \gd. This variant of $\svrg$ ($\msvrg$) chooses a step size based on the total number of iterations $T$ (or alternatively $\epsilon$). For our discussion below, we assume that $T > n$.


\begin{theorem}
\label{thm:nonconvex-svrg-best}
Let $f \in \Fc_n$ have $\sigma$-bounded gradients. Let $\eta_t = \eta = \max\{\nicefrac{c}{\sqrt{T}},\nicefrac{\mu_1}{(Ln^{2/3})}\}$ ($\mu_1$ is the universal constant from Corollary~\ref{cor:nonconvex}), $m = \lfloor\nicefrac{n}{(3\mu_1)}\rfloor$, and $c = \sqrt{\frac{f(x^0) - f(x^*)}{2L\sigma^2}}$. Further, let 
$T$ be a multiple of $m$, $p_{m} = 1$, and $p_{i} = 0$ for $0{\;\leq\;}i{\;<\;}m$. Then, the output $x_a$ of Algorithm~\ref{alg:svrg} satisfies
\begin{align*}
& \mathbb{E}[\|\nabla f(x_a)\|^2] \\
&\leq \bar{\nu} \min\Big\lbrace 2\sqrt{\frac{2(f(x^0) - f(x^{*}))L}{T}} \sigma, \frac{ L n^{2/3} [f(x^{0}) - f(x^*)]}{T\nu_1}\Big\rbrace,
\end{align*}
where $\bar{\nu}$ is a universal constant, $\nu_1$ is the universal constant from Corollary~\ref{cor:nonconvex} and $x^*$ is an optimal solution to~\eqref{eq:1}.
\end{theorem}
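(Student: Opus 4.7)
My plan is to prove Theorem~\ref{thm:nonconvex-svrg-best} by a two-case analysis driven by which argument of the max defining $\eta$ is active. In each case I will bound $\mathbb{E}[\|\nabla f(x_a)\|^2]$ and argue that it lies within a universal multiplicative constant of the minimum of the two target rates; the constant $\bar{\nu}$ will absorb all the involved numerical factors.

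\textbf{Case 1: $\eta = \mu_1/(Ln^{2/3})$.} This branch corresponds to $c/\sqrt{T} \leq \mu_1/(Ln^{2/3})$, which, using $c^2 = (f(x^0)-f(x^*))/(2L\sigma^2)$, rearranges to $T \geq T_1 := L[f(x^0)-f(x^*)]n^{4/3}/(2\mu_1^2\sigma^2)$. In this regime the step size, together with the prescribed $m$ and $\{p_i\}$, matches the hypotheses of Corollary~\ref{cor:nonconvex} verbatim, and I can invoke that corollary to conclude
\[
\mathbb{E}[\|\nabla f(x_a)\|^2] \leq Ln^{2/3}[f(x^0)-f(x^*)]/(T\nu_1),
\]
which is exactly the second term of the $\min$. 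To match the stronger min-claim, I will compute the ratio of the SVRG bound to the SGD bound as a function of $T$: since the SVRG bound decays as $1/T$ while the SGD bound decays as $1/\sqrt{T}$, this ratio is maximized over Case 1 at $T = T_1$, where a direct calculation gives value $\mu_1/(2\nu_1)$. Hence the achieved bound is within a universal-constant factor of $\min(\text{SGD},\text{SVRG})$ throughout Case 1.

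\textbf{Case 2: $\eta = c/\sqrt{T}$.} Here the step size is too large for Corollary~\ref{cor:nonconvex} to apply, so I will instead treat the variance-reduced direction $v_t^{s+1}$ as if it were an ordinary SGD stochastic gradient. Two facts suffice: (i) unbiasedness $\mathbb{E}_{i_t}[v_t^{s+1}] = \nabla f(x_t^{s+1})$, immediate from the SVRG update; and (ii) the uniform norm bound $\|v_t^{s+1}\| \leq 3\sigma$, obtained from the $\sigma$-bounded-gradient hypothesis and the triangle inequality (the third $\sigma$ via $\|\nabla f(\tilde{x}^s)\| \leq (1/n)\sum_i \|\nabla f_i(\tilde{x}^s)\| \leq \sigma$). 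Following the template of the SGD proof of Theorem~\ref{thm:sgd-conv}, I apply the $L$-smooth descent lemma, take conditional expectation using (i) and (ii), and telescope over all $T$ inner iterations to obtain
\[
\mathbb{E}[\|\nabla f(x_a)\|^2] \leq [f(x^0)-f(x^*)]/(\eta T) + 9L\eta\sigma^2/2.
\]
Plugging in $\eta = c/\sqrt{T}$ with the stated $c$ yields a bound of order $\sigma\sqrt{L[f(x^0)-f(x^*)]/T}$, within a constant factor of the SGD bound. Combined with the case condition $T < T_1$ (and the corresponding comparison of the SGD and SVRG bounds near $T=T_1$), this too is within a universal-constant factor of the $\min$.

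\textbf{Main obstacle.} The conceptual heart of the proof is Case 2: I must abandon the variance-reduction machinery and re-derive an SGD-style rate for SVRG's update by viewing $v_t^{s+1}$ purely as an unbiased, uniformly bounded stochastic gradient. The remaining delicacy is cross-case bookkeeping: for each case I must verify that the achieved bound is within a universal multiplicative constant of $\min(\text{SGD bound}, \text{SVRG bound})$. In the worst regime this requires comparing the two target bounds at the boundary $T = T_1$ and choosing $\bar{\nu}$ large enough to absorb both the Case 1 ratio $\mu_1/(2\nu_1)$ and the Case 2 numerical constants (and their products with $2\nu_1/\mu_1$)---all of which are universal since $\mu_1,\nu_1$ are the universal constants from Corollary~\ref{cor:nonconvex}.
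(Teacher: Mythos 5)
Your proposal is correct and follows essentially the same route as the paper: split on which argument of the max is active, invoke Corollary~\ref{cor:nonconvex} when $\eta = \mu_1/(Ln^{2/3})$, rerun the SGD descent-lemma argument with a uniform $O(\sigma^2)$ second-moment bound on $v_t^{s+1}$ when $\eta = c/\sqrt{T}$, and compare the two bounds at the crossover to absorb everything into $\bar{\nu}$. The only cosmetic difference is that you bound $\mathbb{E}[\|v_t^{s+1}\|^2]$ by $9\sigma^2$ via the triangle inequality where the paper gets $4\sigma^2$ via the variance inequality; both constants are absorbed by $\bar{\nu}$.
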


\begin{corollary}
\label{cor:svrg-nonconvex-oracle-best}
If $f \in \Fc_n$ has $\sigma$-bounded gradients, the IFO complexity of Algorithm~\ref{alg:svrg} (with parameters from Theorem~\ref{thm:nonconvex-svrg-best}) to achieve an $\epsilon$-accurate solution is $O(\min\{1/\epsilon^2, n^{2/3}/\epsilon\})$.
\end{corollary}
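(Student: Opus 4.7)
The plan is simply to invert the convergence bound from Theorem~\ref{thm:nonconvex-svrg-best} and then bookkeep the IFO calls over the epoch structure of Algorithm~\ref{alg:svrg}. First I would fix a target accuracy $\epsilon$ and set the guarantee
\[
\mathbb{E}[\|\nabla f(x_a)\|^2] \leq \bar{\nu}\min\Bigl\{2\sqrt{\tfrac{2(f(x^0) - f(x^*))L}{T}}\sigma,\ \tfrac{Ln^{2/3}[f(x^0)-f(x^*)]}{T\nu_1}\Bigr\} \leq \epsilon.
\]
Because the right-hand side is the minimum of two terms, $\epsilon$-accuracy is achieved as soon as \emph{either} term drops below $\epsilon$. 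Solving the first term gives $T = O(1/\epsilon^2)$, while solving the second gives $T = O(n^{2/3}/\epsilon)$. Hence it suffices to take $T = O(\min\{1/\epsilon^2,\ n^{2/3}/\epsilon\})$ inner iterations.

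Next I would convert $T$ into IFO calls. Algorithm~\ref{alg:svrg} performs $S = \lceil T/m\rceil$ epochs; each epoch costs $n$ IFO calls for the full gradient $g^{s+1}$ plus $2m$ IFO calls across the $m$ inner steps (two gradient evaluations per update in line~\ref{alg1:ln:update}). With the choice $m = \lfloor n/(3\mu_1)\rfloor = \Theta(n)$ from Theorem~\ref{thm:nonconvex-svrg-best}, the per-epoch cost is $\Theta(n + m) = \Theta(n) = \Theta(m)$, so the total cost is $\Theta(m \cdot S) = \Theta(T)$. Combining with the bound on $T$ from the previous paragraph yields the claimed $O(\min\{1/\epsilon^2,\ n^{2/3}/\epsilon\})$ IFO complexity.

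There is essentially no hard step: the result is a direct arithmetic consequence of Theorem~\ref{thm:nonconvex-svrg-best}, which itself does all the heavy lifting by fusing the $O(1/\sqrt{T})$ \sgd-style bound (active when $T$ is small relative to $n^{2/3}$, so that the step size $c/\sqrt{T}$ dominates $\mu_1/(Ln^{2/3})$) with the $O(n^{2/3}/T)$ \svrg-style bound (active in the complementary regime). The only minor point to verify is that the two regimes in the $\max$ step size match the two regimes in the resulting $\min$ bound; this is a straightforward case split based on whether $T \lessgtr n^{4/3}L^2/(\mu_1 c)^2$, and in each regime the corresponding term inside the $\min$ matches the bound obtained from the standard analysis with that step size. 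Once this is checked, the IFO count follows immediately.
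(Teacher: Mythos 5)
Your proposal is correct and matches the argument the paper intends (the paper states this corollary without an explicit proof, treating it exactly as you do: invert the two-term bound of Theorem~\ref{thm:nonconvex-svrg-best} to get $T = O(\min\{1/\epsilon^2, n^{2/3}/\epsilon\})$, then note that with $m = \Theta(n)$ the total IFO cost is $\Theta(T + n) = \Theta(T)$ under the standing assumption $T > n$ made in Section~\ref{sec:best.two}). The only cosmetic omission is that $T$ must be rounded up to a multiple of $m$, which costs at most an additional $O(n)$ and is absorbed by the same assumption.
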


An almost identical reasoning can be applied when $f$ is convex to get the bounds specified in Table~\ref{tab:algo-comparison}. Hence, we omit the details and  directly state the following result. 
\begin{corollary}
\label{cor:svrg-convex-oracle-best}
Suppose $f_i$ is convex for $i \in [n]$ and $f$ has $\sigma$-bounded gradients, then the IFO complexity of Algorithm~\ref{alg:svrg} (with step size $\eta = \max\{1/(L\sqrt{T}),1/(8L\sqrt{n})\}$, $m = n$ and $p_i = 1/m$ for $0 \leq i \leq m-1$ and $p_m = 0$) to achieve an $\epsilon$-accurate solution is $O(\min\{1/\epsilon^2, \sqrt{n}/\epsilon\})$.
\end{corollary}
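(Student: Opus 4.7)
The plan is to establish a convex analog of Theorem~\ref{thm:nonconvex-svrg-best} and then translate it into IFO complexity. Specifically, for the output $x_a$ of Algorithm~\ref{alg:svrg} with the parameters in the corollary, I would first prove
$$\mathbb{E}[\|\nabla f(x_a)\|^2] \le \bar\nu\,\min\Bigl\{\sqrt{\tfrac{2(f(x^0)-f(x^*))L}{T}}\,\sigma,\; \tfrac{L\sqrt{n}\bigl(16L\|x^0-x^*\|^2+[f(x^0)-f(x^*)]\bigr)}{T}\Bigr\}$$
for a universal constant $\bar\nu>0$. Setting each argument of the $\min$ to $\epsilon$ gives $T=O(1/\epsilon^2)$ and $T=O(\sqrt n/\epsilon)$ respectively; since $m=n$, each \svrg epoch uses $O(n)$ IFO calls and there are $T/n$ epochs, giving total IFO cost $O(T+n)$. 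Under the standing assumption $T>n$ in Section~\ref{sec:best.two}, the additive $n$ is dominated, yielding the advertised $O(\min\{1/\epsilon^2,\sqrt n/\epsilon\})$ complexity.

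The intermediate $\min$-bound follows from a case split on which branch of $\eta=\max\{1/(L\sqrt T),1/(8L\sqrt n)\}$ is active. In the regime $T>64n$, we have $\eta=1/(8L\sqrt n)$, $m=n$, and $p_i=1/m$ for $i<m$, which are precisely the hypotheses of Corollary~\ref{cor:convex-non-const}; invoking it verbatim produces the second argument of the $\min$, which is also the tighter of the two for $T$ in this range. In the complementary regime $T\le 64n$, we have $\eta=1/(L\sqrt T)$ and we want the first (SGD-style) argument of the $\min$; this is the main obstacle, since plugging $\eta=1/(L\sqrt T)$ into Theorem~\ref{thm:convex} gives a bound that is weaker than $\sigma/\sqrt T$.

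To handle this large-step regime I would replay the proof of Theorem~\ref{thm:sgd-conv} directly on Algorithm~\ref{alg:svrg}, rather than invoke Theorem~\ref{thm:convex}. The key point is that the control-variate stochastic gradient $v_t^{s+1}=\nabla f_{i_t}(x_t^{s+1})-\nabla f_{i_t}(\tilde x^s)+g^{s+1}$ is an unbiased estimator of $\nabla f(x_t^{s+1})$ and, by the triangle inequality under $\sigma$-bounded gradients, satisfies $\|v_t^{s+1}\|\le 3\sigma$ almost surely. These are precisely the two properties used by the descent-lemma chain of Theorem~\ref{thm:sgd-conv}. Running that chain with $\eta=1/(L\sqrt T)$ telescopes $\sum_{s,t}\mathbb{E}[\|\nabla f(x_t^{s+1})\|^2]$ across all $T$ inner iterations; dividing by $T$ and using that $x_a$ is uniform over those iterates directly bounds $\mathbb{E}[\|\nabla f(x_a)\|^2]$. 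The replacement of $\sigma^2$ by $9\sigma^2$ in the variance term changes only the constant, which is absorbed into $\bar\nu$. Crucially, convexity of the $f_i$ is not used in this branch; it enters only through the invocation of Corollary~\ref{cor:convex-non-const} in the other branch. Combining the two cases yields the intermediate bound, and hence the corollary.
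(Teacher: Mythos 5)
Your proposal is correct and matches the paper's (omitted) argument, which is stated to follow by ``almost identical reasoning'' to Theorem~\ref{thm:nonconvex-svrg-best}: the same case split on which branch of the $\max$ step size is active, invoking the convex rate (Corollary~\ref{cor:convex-non-const}) in one branch and replaying the \sgd{} descent chain on the unbiased, $\sigma$-bounded estimator $v_t^{s+1}$ in the other. The only cosmetic difference is that you bound $\mathbb{E}[\|v_t^{s+1}\|^2]$ by $9\sigma^2$ via the triangle inequality where the paper gets $4\sigma^2$ via Lemma~\ref{lem:r-lemma}, which is immaterial since the constant is absorbed into $\bar\nu$.
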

 $\msvrg$ has a convergence rate faster than those of both $\sgd$ and $\svrg$, though this benefit is not without cost. $\msvrg$, in contrast to $\svrg$, uses the additional assumption of $\sigma$-bounded gradients. Furthermore, its step size is \emph{not} fixed since it depends on the number of iterations $T$. 
While it is often difficult in practice to compute the step size of $\msvrg$ (Theorem~\ref{thm:nonconvex-svrg-best}), it is typical to try multiple step sizes and choose the one with the best results.

\section{Experiments}
\label{sec:experiments}
We present our empirical results in this section. For our experiments, we study the problem of multiclass classification using neural networks. This is a typical nonconvex problem encountered in machine learning.

\textbf{Experimental Setup.} We train neural networks with one fully-connected hidden layer of 100 nodes and 10 softmax output nodes. We use $\ell_2$-regularization for training. We use CIFAR-10\footnote{\url{www.cs.toronto.edu/~kriz/cifar.html}}, MNIST\footnote{\url{http://yann.lecun.com/exdb/mnist/}}, and STL-10\footnote{\url{https://cs.stanford.edu/~acoates/stl10/}} datasets for our experiments. These datasets are standard in the neural networks literature. The $\ell_2$ regularization is 1e-3 for CIFAR-10 and MNIST, and 1e-2 for STL-10. The features in the datasets are normalized to the interval $[0,1]$. All the datasets come with a predefined split into training and test datasets.

\begin{figure*}[!t]
\centering
   \begin{minipage}[b]{.28\textwidth}
   \includegraphics[width=\textwidth]{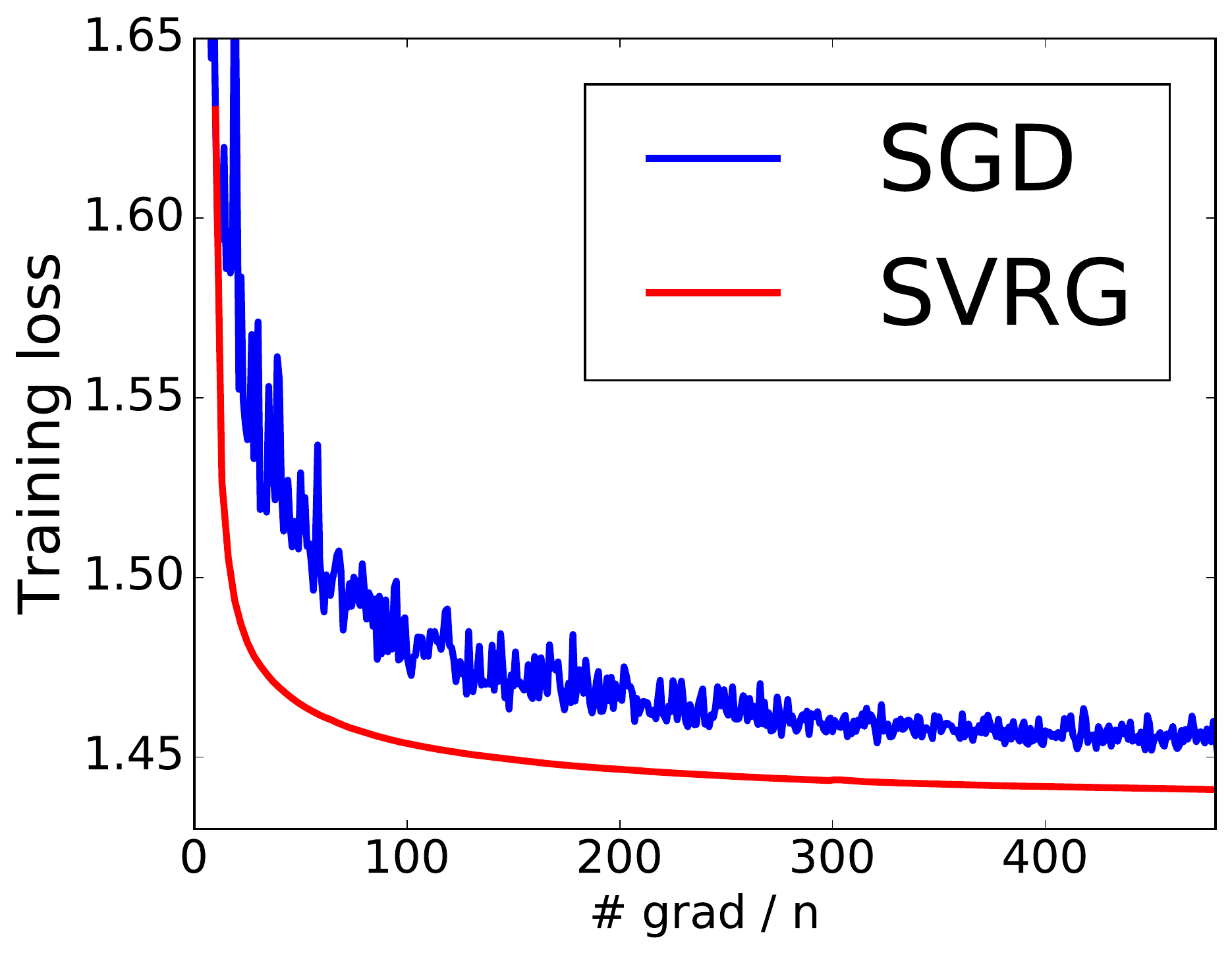}
   \end{minipage} %
   \begin{minipage}[b]{.28\textwidth}
   \includegraphics[width=\textwidth]{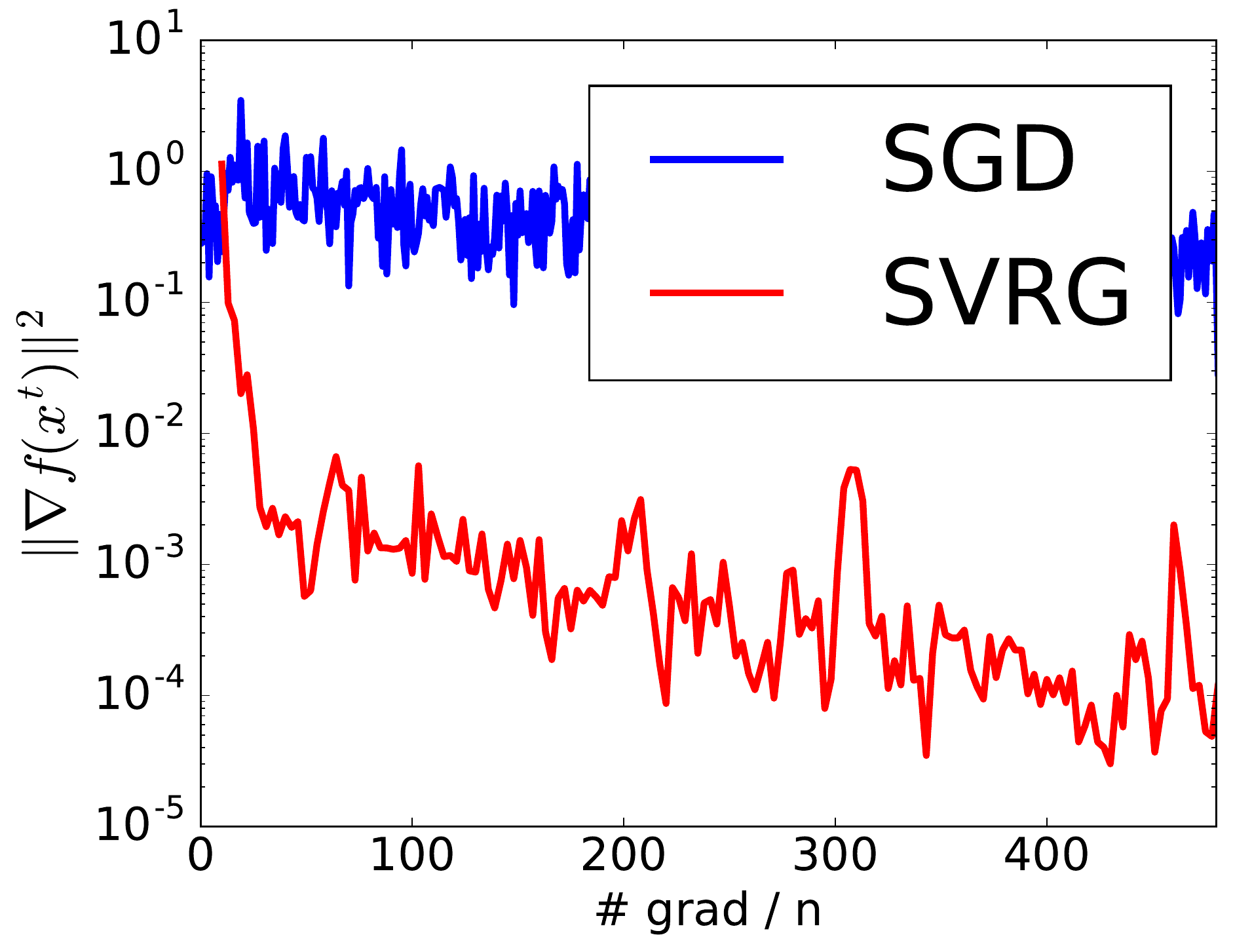}
   \end{minipage}
   \begin{minipage}[b]{.28\textwidth}
   \includegraphics[width=\textwidth]{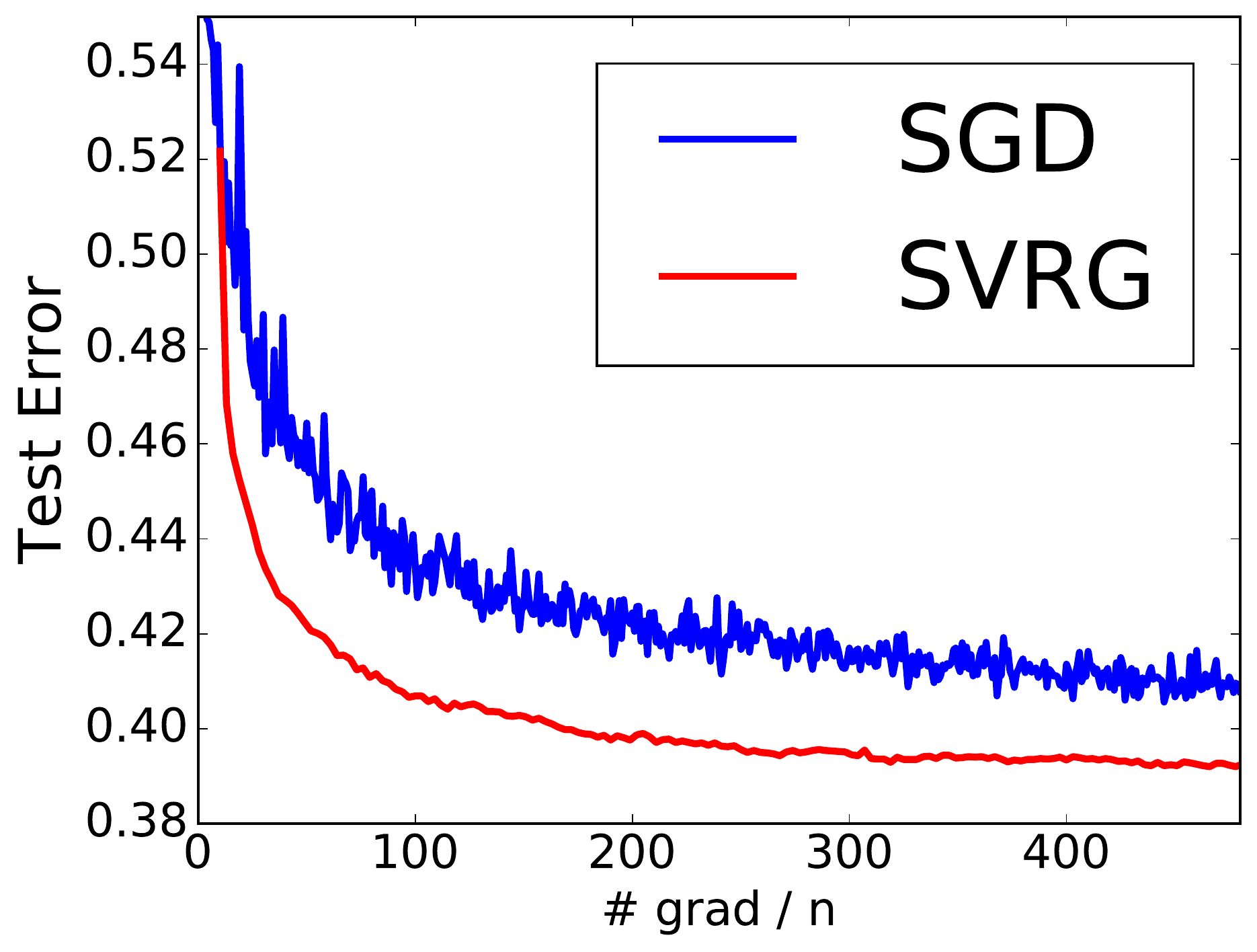}
   \end{minipage}
   \begin{minipage}[b]{.28\textwidth}
   \includegraphics[width=\textwidth]{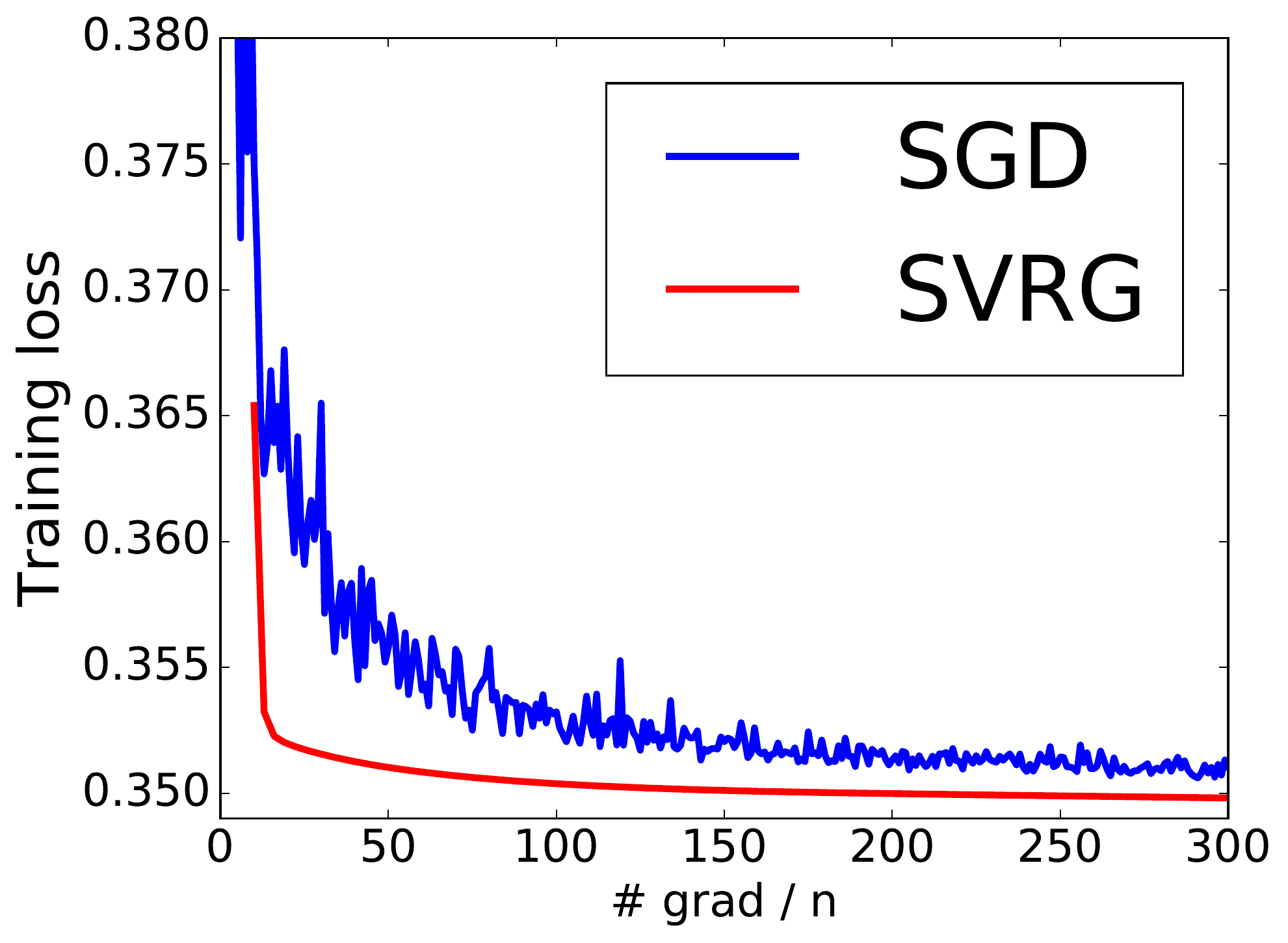}
   \end{minipage} %
   \begin{minipage}[b]{.28\textwidth}
   \includegraphics[width=\textwidth]{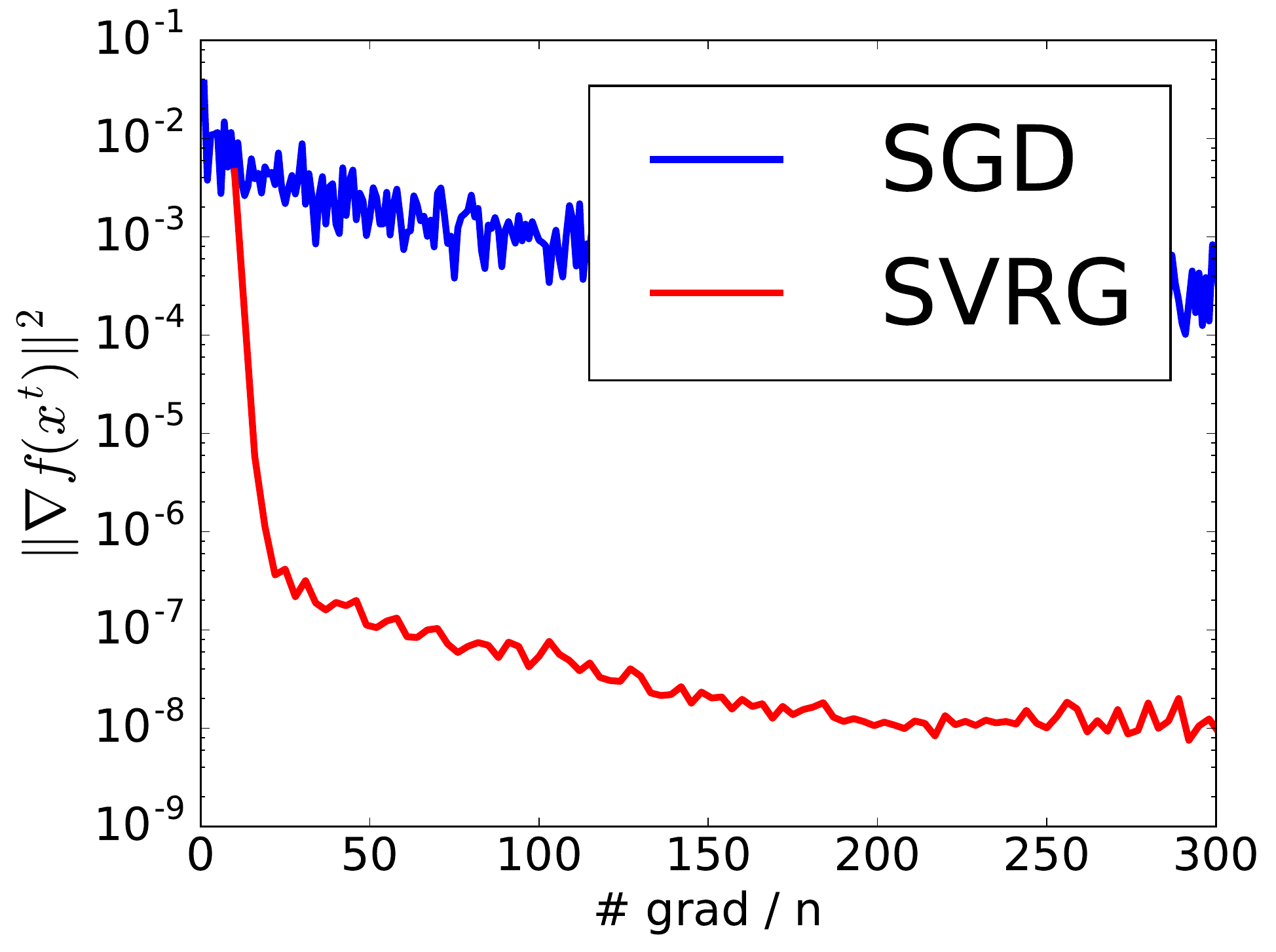}
   \end{minipage}
   \begin{minipage}[b]{.28\textwidth}
   \includegraphics[width=\textwidth]{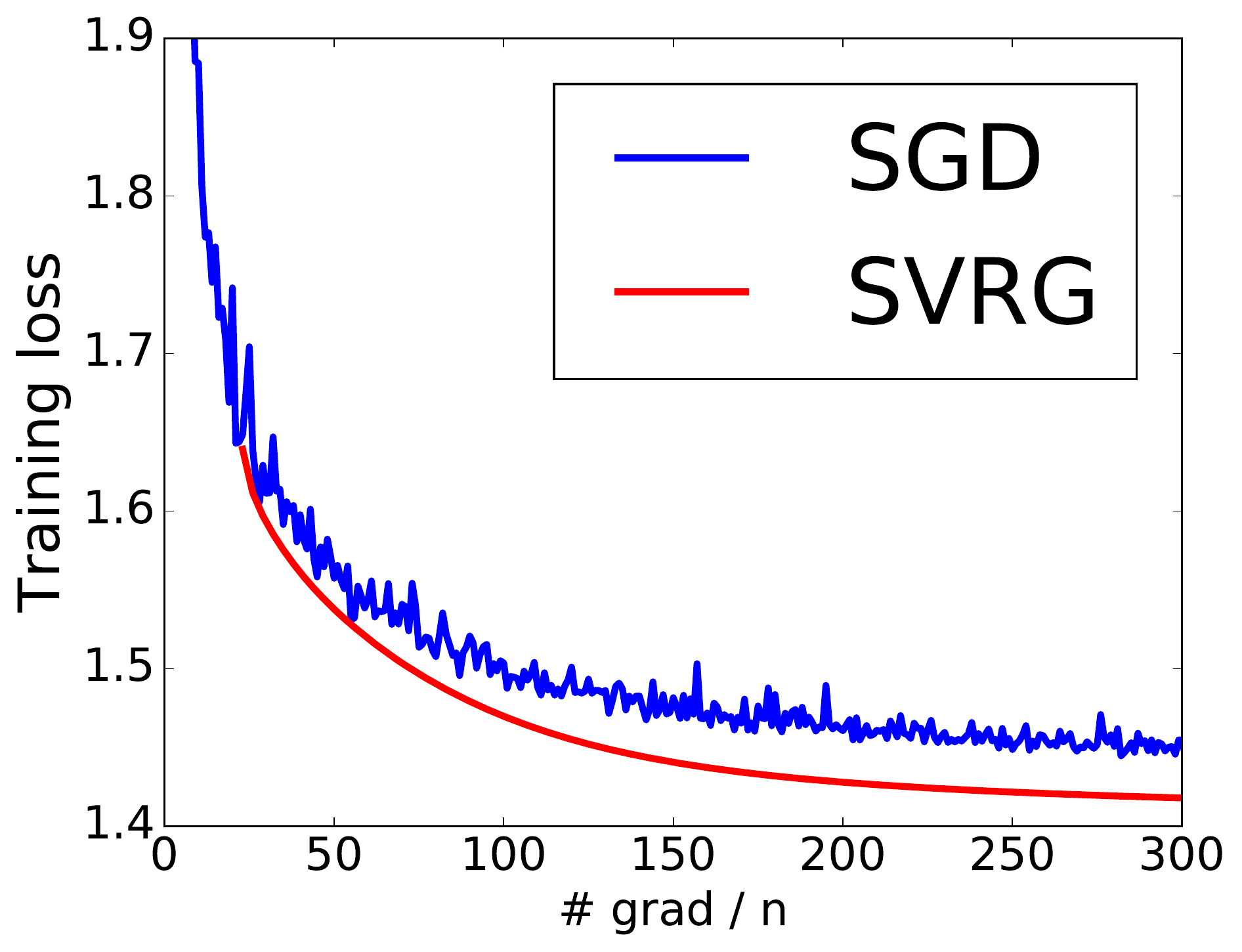}
   \end{minipage}
	\caption{\small Neural network results for CIFAR-10, MNIST and STL-10 datasets. The top row represents the results for CIFAR-10 dataset. The bottom left and middle figures represent the results for MNIST dataset. The bottom right figure represents the result for STL-10.}
	\label{fig:results}
\end{figure*}

We compare $\sgd$ (the \emph{de-facto} algorithm for training neural networks) against nonconvex \svrg. The step size (or learning rate) is critical for \sgd. We set the learning rate of \sgd using the popular $t-$inverse schedule $\eta_t = \eta_0(1 + \eta'\lfloor t/n \rfloor)^{-1}$, where $\eta_0$ and $\eta'$ are chosen so that \sgd gives the best performance on the training loss. In our experiments, we also use $\eta' = 0$; this results in a fixed step size for $\sgd$. For \svrg, we use a fixed step size as suggested by our analysis. Again, the step size is chosen so that \svrg gives the best performance on the training loss.

\textbf{Initialization \& mini-batching.} Initialization is critical to training of neural networks. We use the normalized initialization in \citep{Glorot10} where parameters are chosen uniformly from $[-\sqrt{6/(n_i + n_o)}$, $\sqrt{6/(n_i + n_o)}]$ where $n_i$ and $n_o$ are the number of input and output layers of the neural network, respectively.

For \svrg, we use $n$ iterations  of $\sgd$ for CIFAR-10 and MINST and $2n$ iterations of $\sgd$  for STL-10 before running  Algorithm~\ref{alg:svrg}. Such initialization is standard for variance reduced schemes even for convex problems \citep{Johnson13,Schmidt13}. As noted  earlier in Section~\ref{sec:comparison}, $\svrg$ is more sensitive than \sgd to the initial point, so such an initialization is typically helpful. We use mini-batches of size 10 in our experiments. $\sgd$ with mini-batches is common in training neural networks. Note that mini-batch training is especially beneficial for $\svrg$, as shown by our analysis in Section~\ref{sec:minibatch}. Along the lines of theoretical analysis provided by Theorem~\ref{thm:nonconvex-minibatch}, we use an epoch size $m = n/10$ in our experiments. 

\textbf{Results.} We report objective function (training loss), test error (classification error on the test set), and $\|\nabla f(x^t)\|^2$ (convergence criterion throughout our analysis) for the datasets. For all the algorithms, we compare these criteria against the number of \emph{effective passes} through the data, i.e., IFO calls divided by $n$. This includes the cost of calculating the full gradient at the end of each epoch of $\svrg$. Due to the $\sgd$ initialization in $\svrg$ and mini-batching, the $\svrg$ plots start from x-axis value of 10 for CIFAR-10 and MNIST and 20 for STL-10. Figure~\ref{fig:results} shows the results for our experiment. It can be seen that the $\|\nabla f(x^t)\|^2$ for $\svrg$ is lower compared to $\sgd$, suggesting faster convergence to a stationary point. Furthermore, the training loss is also lower compared to $\sgd$ in all the datasets. Notably, the test error for CIFAR-10 is lower for $\svrg$, indicating better generalization; we did not notice substantial difference in test error for MNIST and STL-10 (see Section~\ref{sec:remaining-expts} in the appendix). Overall, these results on a network with one hidden layer are promising; it will be interesting to study \svrg for deep neural networks in the future.
\vspace*{-3pt}
\section{Discussion}
In this paper, we examined a VR scheme for nonconvex optimization. We showed that by employing VR in stochastic methods, one can perform better than both $\sgd$ and $\gd$ in the context of nonconvex optimization. When the function $f$ in~\eqref{eq:1} is gradient dominated, we proposed a variant of \svrg that has linear convergence to the \emph{global} minimum. Our analysis shows that \svrg has a number of interesting properties that include convergence with fixed step size, descent property after every epoch; a property that need not hold for $\sgd$. We also showed that $\svrg$, in contrast to $\sgd$, enjoys efficient mini-batching, attaining speedups linear in the size of the mini-batches in parallel settings. Our analysis also reveals that the initial point and use of mini-batches are important to $\svrg$.

Before concluding the paper, we would like to discuss the implications of our work and few caveats. One should exercise some caution while interpreting the results in the paper. All our theoretical results are based on the stationarity gap. In general, this does not necessarily translate to optimality gap or low training loss and test error. One criticism against VR schemes in nonconvex optimization is the general wisdom that variance in the stochastic gradients of $\sgd$ can actually help it escape local minimum and saddle points. In fact, \citet{Ge15} add additional noise to the stochastic gradient in order to escape saddle points. However, one can reap the benefit of VR schemes even in such scenarios. For example, one can envision an algorithm which uses $\sgd$ as an exploration tool to obtain a good initial point and then uses a VR algorithm as an exploitation tool to quickly converge to a \emph{good} local minimum. In either case, we believe variance reduction can be used as an important tool alongside other tools like momentum, adaptive learning rates for faster and better nonconvex optimization.

\bibliographystyle{custom}
\setlength{\bibsep}{3pt}
\bibliography{bibfile}

\appendix
\section*{Appendix}

\section{Nonconvex SGD: Convergence Rate}

\section*{Proof of Theorem~\ref{thm:sgd-conv}}
\begin{theorem*}
  Suppose $f$ has $\sigma$-bounded gradient; let $\eta_t = \eta = c/\sqrt{T}$ where $c = \sqrt{\tfrac{2(f(x^0) - f(x^*))}{L\sigma^2}}$, 
  and $x^*$ is an optimal solution to~\eqref{eq:1}. Then, the iterates of Algorithm~\ref{alg:sgd} satisfy
  \begin{align*}
    \min_{0 \leq t \leq T-1} \mathbb{E}[\|\nabla f(x^t)\|^2] \leq
    \sqrt{\tfrac{2(f(x^0) - f(x^*)) L}{T}}\sigma.
  \end{align*}
\end{theorem*}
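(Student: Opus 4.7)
The plan is to mimic the standard nonconvex descent analysis for \sgd, using only $L$-smoothness of $f$ together with unbiasedness and boundedness of the stochastic gradient. First I would write down the descent inequality implied by $L$-smoothness: for any iterate $x^t$ and update $x^{t+1} = x^t - \eta \nabla f_{i_t}(x^t)$,
\begin{equation*}
f(x^{t+1}) \;\le\; f(x^t) + \langle \nabla f(x^t), x^{t+1} - x^t\rangle + \tfrac{L}{2}\|x^{t+1}-x^t\|^2.
\end{equation*}
Substituting the update and taking conditional expectation over $i_t$ (uniformly chosen from $[n]$) turns $\mathbb{E}[\nabla f_{i_t}(x^t) \mid x^t]$ into the true gradient $\nabla f(x^t)$, while the $\sigma$-bounded gradient assumption lets me bound $\mathbb{E}[\|\nabla f_{i_t}(x^t)\|^2 \mid x^t] \le \sigma^2$. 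This yields a per-step inequality of the form
\begin{equation*}
\mathbb{E}[f(x^{t+1})] \;\le\; \mathbb{E}[f(x^t)] - \eta\,\mathbb{E}\|\nabla f(x^t)\|^2 + \tfrac{L\eta^2}{2}\sigma^2.
\end{equation*}

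Next I would rearrange to isolate $\mathbb{E}\|\nabla f(x^t)\|^2$, sum the inequality for $t=0,\dots,T-1$, and let the left-hand side telescope. The telescoping collapses to $f(x^0) - \mathbb{E}[f(x^T)]$, which is at most $f(x^0) - f(x^*)$ since $x^*$ is a global minimizer. After dividing by $T\eta$, I obtain
\begin{equation*}
\frac{1}{T}\sum_{t=0}^{T-1}\mathbb{E}\|\nabla f(x^t)\|^2 \;\le\; \frac{f(x^0)-f(x^*)}{T\eta} + \tfrac{L\eta}{2}\sigma^2,
\end{equation*}
and since the minimum over $t$ is dominated by the average, the same bound holds for $\min_{0\le t\le T-1}\mathbb{E}\|\nabla f(x^t)\|^2$.

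Finally I would optimize the right-hand side in $\eta$. Treating the bound as $A/(T\eta) + B\eta$ with $A = f(x^0)-f(x^*)$ and $B = L\sigma^2/2$, the minimum over $\eta>0$ occurs at $\eta = \sqrt{A/(BT)} = \sqrt{2(f(x^0)-f(x^*))/(L\sigma^2 T)}$, which is exactly the prescribed choice $c/\sqrt{T}$ in the theorem statement. Plugging this $\eta$ back in gives $2\sqrt{AB/T} = \sqrt{2(f(x^0)-f(x^*))L/T}\,\sigma$, matching the claimed rate.

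I do not anticipate a serious obstacle here: every step is a textbook manipulation, and the only subtlety is being careful that expectations are taken in the right order (conditionally on the past, then unconditionally via the tower property) so that the cross-term $\langle \nabla f(x^t), \mathbb{E}[\nabla f_{i_t}(x^t)\mid x^t]\rangle$ simplifies cleanly to $\|\nabla f(x^t)\|^2$. The $\sigma$-bounded gradient hypothesis is essential in bounding the second-moment term; without it, one would need to control variance by some other means, which is precisely what the \svrg-style analysis in the rest of the paper is designed to do.
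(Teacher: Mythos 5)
Your proposal is correct and follows essentially the same route as the paper's proof: the $L$-smoothness descent inequality, unbiasedness plus the $\sigma$-bounded gradient to control the second moment, telescoping, bounding the minimum by the average, and then the choice of $\eta$ that balances the two terms (which the paper simply plugs in rather than derives by optimization, but the calculation is identical). No gaps.
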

\begin{proof}
  We include the proof here for completeness. Please refer to \citep{Ghadimi13} for a more general result.

The iterates of Algorithm~\ref{alg:sgd} satisfy the following bound:
\begin{align}
  &\E[f(x^{t+1})] \leq \mathbb{E}[f(x^t) + \left\langle \nabla f(x^t), x^{t+1} - x^{t} \right\rangle \nonumber \\
  & \qquad \qquad + \tfrac{L}{2} \|x^{t+1} - x^t \|^2] \\
  & \leq \mathbb{E}[f(x^t)] - \eta_t \mathbb{E}[\|\nabla f(x^t)\|^2] +\tfrac{L\eta_t^2}{2} \mathbb{E}[\|\nabla f_{i_t}(x^t)\|^2] \nonumber \\
  & \leq \mathbb{E}[f(x^t)] - \eta_t \mathbb{E}[\|\nabla f(x^t)\|^2] +\tfrac{L\eta_t^2}{2} \sigma^2.
    \label{eq:sgd-proof-eq}
\end{align}
The first inequality follows from Lipschitz continuity of $\nabla f$. The second inequality follows from the update in Algorithm~\ref{alg:sgd} and since $\E_{i_t}[\nabla f_{i_t}(x^t)] = \nabla f(x^t)$ (unbiasedness of the stochastic gradient). The last step uses our assumption on gradient boundedness. Rearranging Equation~\eqref{eq:sgd-proof-eq} we obtain
\begin{align}
  \E[\|\nabla f(x^t)\|^2] \leq \tfrac{1}{\eta_t}\mathbb{E}[f(x^t) - f(x^{t+1})] +\tfrac{L\eta_t}{2} \sigma^2.
  \label{sgd-proof-eq2}
\end{align}
Summing Equation~\eqref{sgd-proof-eq2} from $t=0$ to $T-1$ and using that $\eta_t$ is constant $\eta$ we obtain
\begin{align*}
  \min_{t} \E[\|\nabla f(x^t)\|^2] &\leq \tfrac{1}{T}\nlsum_{t=0}^{T-1} \mathbb{E}[\|f(x^t)\|^2] \\
  &\leq \tfrac{1}{T\eta}\mathbb{E}[f(x^0) - f(x^{T})] +\tfrac{L\eta}{2} \sigma^2 \\
  &\leq \tfrac{1}{T\eta} (f(x^0) - f(x^{*})) +\tfrac{L\eta}{2} \sigma^2 \\
  &\leq \tfrac{1}{\sqrt{T}}\Bigl(\tfrac{1}{c}\bigl(f(x^0) - f(x^{*})\bigr) +\tfrac{Lc}{2} \sigma^2\Bigr).
\end{align*}
The first step holds because the minimum is less than the average. The second and third steps are obtained from Equation~\eqref{sgd-proof-eq2} and the fact that $f(x^*) \leq f(x^T)$, respectively. The final inequality follows upon using $\eta=c/\sqrt{T}$. By setting 
$$
c = \sqrt{\frac{2(f(x^0) - f(x^*))}{L\sigma^2}}
$$
in the above inequality, we get the desired result.
\end{proof}

\section{Nonconvex SVRG}

In this section, we provide the proofs of the results for nonconvex $\svrg$. We first start with few useful lemmas and then proceed towards the main results.

\begin{lemma}
\label{lem:nonconvex-svrg}
For $c_t, c_{t+1}, \beta_t > 0$, suppose we have 
$$
c_{t} = c_{t+1}(1 + \eta_t\beta_t + 2\eta_t^2L^2 ) +  \eta_t^2L^3.
$$ 
Let $\eta_t$, $\beta_t$ and $c_{t+1}$ be chosen such that $\Gamma_t > 0$ (in Equation~\eqref{eq:Gamma-t}). The iterate $x^{s+1}_t$ in Algorithm~\ref{alg:svrg} satisfy the bound:
  \begin{align*}
    \mathbb{E}[\|\nabla f(x^{s+1}_{t})\|^2] \leq \frac{R^{s+1}_{t} - R^{s+1}_{t+1}}{\Gamma_t},
  \end{align*}
  where $R^{s+1}_{t} := \mathbb{E}[f(x^{s+1}_{t}) + c_{t} \|x^{s+1}_{t} - \tilde{x}^{s}\|^2]$ for $0 \leq s \leq S-1$.
\end{lemma}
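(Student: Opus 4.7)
The plan is to set up a one-step Lyapunov recursion of the form $R_{t+1}^{s+1} \le R_t^{s+1} - \Gamma_t\,\mathbb{E}[\|\nabla f(x_t^{s+1})\|^2]$, from which the claim follows by rearrangement. The three ingredients I need are (i) a descent-style bound on $\mathbb{E}[f(x_{t+1}^{s+1})]$, (ii) a variance bound on the SVRG estimator $v_t^{s+1}$, and (iii) a bound on $\mathbb{E}[\|x_{t+1}^{s+1}-\tilde x^s\|^2]$. The recursion defining $c_t$ will then be exactly what is needed to make the coefficient of $\|x_t^{s+1}-\tilde x^s\|^2$ on the right-hand side collapse into $c_t\,\|x_t^{s+1}-\tilde x^s\|^2$.

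First I would use $L$-smoothness of $f$ to write
\begin{equation*}
\mathbb{E}[f(x_{t+1}^{s+1})] \le \mathbb{E}[f(x_t^{s+1})] + \mathbb{E}[\langle \nabla f(x_t^{s+1}), x_{t+1}^{s+1}-x_t^{s+1}\rangle] + \tfrac{L}{2}\mathbb{E}[\|x_{t+1}^{s+1}-x_t^{s+1}\|^2],
\end{equation*}
then substitute the update $x_{t+1}^{s+1}-x_t^{s+1}=-\eta_t v_t^{s+1}$ and use the crucial unbiasedness identity $\mathbb{E}_{i_t}[v_t^{s+1}]=\nabla f(x_t^{s+1})$ (which follows directly from the definition of $v_t^{s+1}$ together with $g^{s+1}=\nabla f(\tilde x^s)$). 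This yields a bound with $-\eta_t\,\mathbb{E}[\|\nabla f(x_t^{s+1})\|^2]$ and $\tfrac{L\eta_t^2}{2}\,\mathbb{E}[\|v_t^{s+1}\|^2]$.

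Next I bound $\mathbb{E}[\|v_t^{s+1}\|^2]$. Decomposing $v_t^{s+1}=\nabla f(x_t^{s+1})+A$ where $A:=\nabla f_{i_t}(x_t^{s+1})-\nabla f_{i_t}(\tilde x^s)-\bigl(\nabla f(x_t^{s+1})-\nabla f(\tilde x^s)\bigr)$ has zero mean, the variance inequality $\mathbb{E}[\|A\|^2]\le\mathbb{E}[\|\nabla f_{i_t}(x_t^{s+1})-\nabla f_{i_t}(\tilde x^s)\|^2]\le L^2\|x_t^{s+1}-\tilde x^s\|^2$ together with $\|a+b\|^2\le 2\|a\|^2+2\|b\|^2$ gives
\begin{equation*}
\mathbb{E}[\|v_t^{s+1}\|^2] \;\le\; 2\,\mathbb{E}[\|\nabla f(x_t^{s+1})\|^2] + 2L^2\,\mathbb{E}[\|x_t^{s+1}-\tilde x^s\|^2].
\end{equation*}
For the squared distance to the snapshot, I would expand $\|x_{t+1}^{s+1}-\tilde x^s\|^2=\|x_t^{s+1}-\tilde x^s-\eta_t v_t^{s+1}\|^2$ and bound the cross term by Young's inequality with parameter $\beta_t$, namely $-2\langle x_t^{s+1}-\tilde x^s,\nabla f(x_t^{s+1})\rangle \le \tfrac{1}{\beta_t}\|\nabla f(x_t^{s+1})\|^2 + \beta_t\|x_t^{s+1}-\tilde x^s\|^2$, then plug in the variance bound above.

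Finally I add $c_{t+1}$ times the distance bound to the smoothness bound to form $R_{t+1}^{s+1}$. A direct tally of coefficients then shows that $\|x_t^{s+1}-\tilde x^s\|^2$ picks up the factor $c_{t+1}(1+\eta_t\beta_t+2\eta_t^2L^2)+\eta_t^2L^3$, which is precisely $c_t$, while $\|\nabla f(x_t^{s+1})\|^2$ picks up $-\eta_t+L\eta_t^2+\tfrac{c_{t+1}\eta_t}{\beta_t}+2c_{t+1}\eta_t^2=-\Gamma_t$. Thus $R_{t+1}^{s+1}\le R_t^{s+1}-\Gamma_t\,\mathbb{E}[\|\nabla f(x_t^{s+1})\|^2]$, and dividing by $\Gamma_t>0$ delivers the claim. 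The only genuine subtlety is making the bookkeeping match the definition of $c_t$: this hinges on using the loose inequality $\|a+b\|^2\le 2\|a\|^2+2\|b\|^2$ (rather than the sharp variance decomposition) when bounding $\mathbb{E}[\|v_t^{s+1}\|^2]$, which is what produces the factor $2\eta_t^2L^2$ in the recursion for $c_t$.
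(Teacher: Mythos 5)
Your proposal is correct and follows essentially the same route as the paper: smoothness plus unbiasedness for the function-value term, Young's inequality with parameter $\beta_t$ for the snapshot-distance term, the variance bound $\mathbb{E}[\|v_t^{s+1}\|^2]\le 2\mathbb{E}[\|\nabla f(x_t^{s+1})\|^2]+2L^2\mathbb{E}[\|x_t^{s+1}-\tilde x^s\|^2]$ (the paper isolates this as a separate lemma, proved by exactly your zero-mean decomposition), and the same coefficient tally yielding $c_t$ and $-\Gamma_t$. No gaps.
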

\begin{proof}
Since $f$ is $L$-smooth we have
\begin{align*}
&\mathbb{E}[f(x^{s+1}_{t+1})] \leq \mathbb{E}[f(x^{s+1}_{t}) + \langle \nabla f(x^{s+1}_t), x^{s+1}_{t+1} - x^{s+1}_t \rangle \nonumber \\
& \qquad \qquad \qquad \qquad \qquad + \tfrac{L}{2} \| x^{s+1}_{t+1} - x^{s+1}_t \|^2].
\end{align*}
Using the \svrg update in Algorithm~\ref{alg:svrg} and its unbiasedness, the right hand side above is further upper bounded by
\begin{align}
  \mathbb{E}[f(x^{s+1}_{t}) - \eta_t \|\nabla f(x^{s+1}_{t})\|^2 + \tfrac{L\eta_t^2}{2} \|v^{s+1}_t \|^2].
\label{eq:svrg-proof-eq1}
\end{align}
Consider now the Lyapunov function
$$
R^{s+1}_{t} := \mathbb{E}[f(x^{s+1}_{t}) + c_{t} \|x^{s+1}_{t} - \tilde{x}^s\|^2].
$$
For bounding it we will require the following:
\begin{align}
&\mathbb{E}[\|x^{s+1}_{t+1} - \tilde{x}^s\|^2] = \mathbb{E}[\|x^{s+1}_{t+1} - x^{s+1}_t + x^{s+1}_t - \tilde{x}^s\|^2] \nonumber \\
&= \mathbb{E}[\|x^{s+1}_{t+1} - x^{s+1}_t\|^2 + \|x^{s+1}_t - \tilde{x}^s\|^2 \nonumber \\
& \qquad \qquad \qquad + 2\langle x^{s+1}_{t+1} - x^{s+1}_t, x^{s+1}_t - \tilde{x}^s\rangle] \nonumber \\
&= \mathbb{E}[\eta_t^2\|v^{s+1}_t\|^2 + \|x^{s+1}_t - \tilde{x}^s\|^2] \nonumber \\
& \nonumber\quad \qquad \qquad - 2\eta_t \mathbb{E}[\langle \nabla f(x^{s+1}_t), x^{s+1}_t - \tilde{x}^s\rangle]\\
&\leq \mathbb{E}[\eta_t^2\|v^{s+1}_t\|^2 + \|x^{s+1}_t - \tilde{x}^s\|^2] \nonumber \\
& \qquad + 2 \eta_t \mathbb{E}\left[\tfrac{1}{2\beta_t} \|\nabla f(x^{s+1}_t)\|^2 + \tfrac{1}{2}\beta_t \| x^{s+1}_t - \tilde{x}^s \|^2 \right].
\label{eq:svrg-proof-eq2}
\end{align}
The second equality follows from the unbiasedness of the update of $\svrg$. The last inequality follows from a simple application of Cauchy-Schwarz and Young's inequality. Plugging Equation~\eqref{eq:svrg-proof-eq1} and Equation~\eqref{eq:svrg-proof-eq2} into $R^{s+1}_{t+1}$, we obtain the following bound:
\begin{align}
R^{s+1}_{t+1} &\leq \mathbb{E}[f(x^{s+1}_{t}) - \eta_t \|\nabla f(x^{s+1}_{t})\|^2 + \tfrac{L\eta_t^2}{2} \|v^{s+1}_t \|^2] \nonumber \\
&  \qquad + \mathbb{E}[c_{t+1}\eta_t^2\|v^{s+1}_t\|^2 + c_{t+1}\|x^{s+1}_t - \tilde{x}^s\|^2] \nonumber \\
&  \qquad + 2 c_{t+1}\eta_t \mathbb{E}\left[\tfrac{1}{2\beta_t} \|\nabla f(x^{s+1}_t)\|^2 + \tfrac{1}{2}\beta_t \| x^{s+1}_t - \tilde{x}^s \|^2 \right] \nonumber\\
&\leq \mathbb{E}[f(x^{s+1}_{t}) - \left(\eta_t - \tfrac{c_{t+1}\eta_t}{\beta_t}\right) \|\nabla f(x^{s+1}_{t})\|^2 \nonumber\\
&  \qquad + \left(\tfrac{L\eta_t^2}{2} + c_{t+1}\eta_t^2 \right)\mathbb{E}[\|v^{s+1}_t\|^2] \nonumber\\
&  \qquad + \left( c_{t+1} + c_{t+1}\eta_t\beta_t \right) \mathbb{E}\left[\| x^{s+1}_t - \tilde{x}^s \|^2 \right].
\label{eq:svrg-proof-eq3}
\end{align}
To further bound this quantity, we use Lemma~\ref{lem:nonconvex-variance-lemma} to bound $\mathbb{E}[\|v^{s+1}_{t}\|^2]$, so that upon substituting it in Equation~\eqref{eq:svrg-proof-eq3}, we see that
\begin{align*}
& R^{s+1}_{t+1} \leq \mathbb{E}[f(x^{s+1}_{t})] \nonumber \\
& \qquad - \left(\eta_t - \tfrac{c_{t+1}\eta_t}{\beta_t} - \eta_t^2L - 2c_{t+1}\eta_t^2\right) \mathbb{E}[\|\nabla f(x^{s+1}_{t})\|^2] \nonumber\\
& \qquad + \left[c_{t+1}\bigl(1 + \eta_t\beta_t + 2\eta_t^2L^2\bigr)+\eta_t^2L^3\right] 
  \mathbb{E}\left[\| x^{s+1}_t - \tilde{x}^s \|^2 \right] \nonumber \\
& \leq R^{s+1}_{t} - \bigl(\eta_t - \tfrac{c_{t+1}\eta_t}{\beta_t} - \eta_t^2L - 2c_{t+1}\eta_t^2\bigr) \mathbb{E}[\|\nabla f(x^{s+1}_{t})\|^2].
\end{align*}
The second inequality follows from the definition of $c_{t}$ and $R_t^{s+1}$, thus concluding the proof. 
\end{proof}


\section*{Proof of Theorem~\ref{thm:nonconvex-inter}}
\begin{theorem*}
  Let $f \in \Fc_n$. Let $c_m = 0$, $\eta_t = \eta > 0$, $\beta_t = \beta > 0$, and $c_{t} = c_{t+1}(1 + \eta\beta + 2\eta^2L^2 ) +  \eta^2L^3$ such that $\Gamma_t > 0$ for \fromto{0}{t}{m-1}. Define the quantity $\gamma_n := \min_t \Gamma_t$. 
  Further, let $p_{i} = 0$ for $0{\;\leq\;}i{\;<\;}m$ and $p_{m} = 1$, and let $T$ be a multiple of $m$. Then for the output $x_a$ of Algorithm~\ref{alg:svrg} we have
  \begin{align*}
    \mathbb{E}[\|\nabla f(x_a)\|^2] \leq \frac{f(x^{0}) - f(x^*)}{T\gamma_n},
  \end{align*}
  where $x^*$ is an optimal solution to~\eqref{eq:1}.
\end{theorem*}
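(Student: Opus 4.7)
The plan is to build on the per-step Lyapunov descent result from Lemma~\ref{lem:nonconvex-svrg}, which bounds $\mathbb{E}[\|\nabla f(x^{s+1}_t)\|^2]$ by $(R^{s+1}_t - R^{s+1}_{t+1})/\Gamma_t$, and turn it into a global bound by telescoping within each epoch and then across epochs. Since the hypotheses set $\eta_t$, $\beta_t$, and the recursion for $c_t$ to be constant across $t$ in each epoch, the Lyapunov function $R^{s+1}_t = \mathbb{E}[f(x^{s+1}_t) + c_t\|x^{s+1}_t - \tilde{x}^s\|^2]$ applies exactly as in the lemma, and replacing $\Gamma_t$ by $\gamma_n := \min_t \Gamma_t$ yields a single uniform divisor.

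First I would fix an epoch $s$ and sum the per-step bound over $t = 0, \dots, m-1$; because $\Gamma_t \ge \gamma_n > 0$, the right-hand side telescopes to $(R^{s+1}_0 - R^{s+1}_m)/\gamma_n$. Next I would exploit the boundary conditions imposed by the choice $p_m = 1$, $p_i = 0$ for $i < m$, and $c_m = 0$: these force $\tilde{x}^{s+1} = x^{s+1}_m$, so the starting iterate of the next epoch satisfies $x^{s+2}_0 = x^{s+1}_m = \tilde{x}^{s+1}$, and combined with $c_0$ multiplying a term that vanishes at the epoch boundary (since $c_m = 0$ makes $R^{s+1}_m = \mathbb{E}[f(x^{s+1}_m)]$, while $R^{s+2}_0 = \mathbb{E}[f(\tilde{x}^{s+1})]$), we get $R^{s+2}_0 = R^{s+1}_m$. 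This is precisely the alignment needed to telescope across epochs.

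Summing over $s = 0, \dots, S-1$ with $S = T/m$, the cross-epoch telescoping collapses the total to $(R^1_0 - R^S_m)/\gamma_n$. Since $x^1_0 = \tilde{x}^0 = x^0$, we have $R^1_0 = f(x^0)$, and $R^S_m = \mathbb{E}[f(x^S_m)] \ge f(x^*)$, so the double sum $\sum_{s,t}\mathbb{E}[\|\nabla f(x^{s+1}_t)\|^2]$ is at most $(f(x^0) - f(x^*))/\gamma_n$. Because $x_a$ is drawn uniformly from the $T$ inner iterates, dividing by $T$ yields the claimed bound.

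I do not anticipate any serious obstacle: Lemma~\ref{lem:nonconvex-svrg} already contains the hard variance-control argument, and what remains is bookkeeping. The only subtle step is verifying the cross-epoch telescoping, which hinges specifically on the prescribed probabilities ($p_m = 1$, $p_i = 0$ otherwise) together with the boundary condition $c_m = 0$; if either were relaxed, the coupling term $c_0 \|x^{s+2}_0 - \tilde{x}^{s+1}\|^2$ would no longer vanish and the chain would break.
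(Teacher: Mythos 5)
Your proposal is correct and follows essentially the same route as the paper's proof: telescope the Lyapunov bound of Lemma~\ref{lem:nonconvex-svrg} within each epoch using $\gamma_n = \min_t \Gamma_t$, use $c_m = 0$ together with $p_m = 1$, $p_i = 0$ ($i<m$) to identify $R^{s+1}_m = \mathbb{E}[f(\tilde{x}^{s+1})] = R^{s+2}_0$, telescope across epochs, and average over the $T$ uniformly chosen inner iterates. The cross-epoch alignment you flag as the one subtle point is exactly the step the paper's proof also isolates.
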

\begin{proof}
  Since $\eta_t = \eta$ for $t \in \{0, \dots, m-1\}$, using  Lemma~\ref{lem:nonconvex-svrg} and telescoping the sum, we obtain
\begin{align*}
  \nlsum_{t=0}^{m-1} \mathbb{E}[\|\nabla f(x^{s+1}_{t})\|^2] \leq \frac{R^{s+1}_{0} - R^{s+1}_{m}}{\gamma_n}.
\end{align*}
This inequality in turn implies that
\begin{align}
  \label{eq:descent-property}
  \nlsum_{t=0}^{m-1} \mathbb{E}[\|\nabla f(x^{s+1}_{t})\|^2] \leq \frac{\mathbb{E}[f(\tilde{x}^s) - f(\tilde{x}^{s+1})]}{\gamma_n},
\end{align}
where we used that $R^{s+1}_{m} = \mathbb{E}[f(x^{s+1}_m)] = \mathbb{E}[f(\tilde{x}^{s+1})]$ (since $c_m = 0$, $p_{m} = 1$, and $p_i = 0$ for $i < m$), and that $R^{s+1}_{0} = \mathbb{E}[f(\tilde{x}^s)]$ (since $x^{s+1}_0 = \tilde{x}^s$, as $p_{m} = 1$ and $p_i = 0$ for $i < m$). Now sum over all epochs to obtain 
\begin{align}
  \frac{1}{T} \sum_{s=0}^{S-1}\sum_{t=0}^{m-1} \mathbb{E}[\|\nabla f(x^{s+1}_{t})\|^2] \leq \frac{f(x^{0}) - f(x^*)}{T\gamma_n}.
\label{eq:nonconvex-cor-eq1}
\end{align}
The above inequality used the fact that $\tilde{x}^0 = x^0$. Using the above inequality and the definition of $x_a$ in Algorithm~\ref{alg:svrg}, we obtain the desired result.
\end{proof}

\section*{Proof of Theorem~\ref{thm:nonconvex-gen}}
\begin{theorem*}
  Suppose $f \in \Fc_n$. Let $\eta = \mu_0/(Ln^{\alpha})$ ($0 < \mu_0 < 1$ and $0 < \alpha \leq 1$), $\beta = L/n^{\alpha/2}$, $m = \lfloor n^{3\alpha/2}/(3\mu_0) \rfloor$ and $T$ is some multiple of $m$. Then there exists universal constants $\mu_0, \nu > 0$ such that we have the following: $\gamma_n \geq \frac{\nu}{Ln^{\alpha}}$ in Theorem~\ref{thm:nonconvex-inter} and
  \begin{align*}
    \mathbb{E}[\|\nabla f(x_a)\|^2] &\leq \frac{Ln^{\alpha} [f(x^{0}) - f(x^*)]}{T\nu},
  \end{align*} 
  where $x^*$ is an optimal solution to the problem in~\eqref{eq:1} and $x_a$ is the output of Algorithm~\ref{alg:svrg}.
\end{theorem*}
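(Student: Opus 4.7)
The plan is to start from Theorem~\ref{thm:nonconvex-inter}: once we prove the $\gamma_n \geq \nu/(Ln^{\alpha})$ bound, the second inequality in the statement follows by simply plugging into the conclusion of Theorem~\ref{thm:nonconvex-inter}. So the entire proof reduces to lower bounding $\gamma_n = \min_t \Gamma_t$ under the specified choices $\eta = \mu_0/(Ln^{\alpha})$, $\beta = L/n^{\alpha/2}$, $m = \lfloor n^{3\alpha/2}/(3\mu_0)\rfloor$, and the recurrence $c_t = c_{t+1}(1+\eta\beta+2\eta^2L^2) + \eta^2L^3$ with $c_m = 0$.

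\textbf{Step 1: Solve the recurrence explicitly.} Let $\theta := \eta\beta + 2\eta^2L^2$. The linear recurrence then gives the closed form
\[
c_t \;=\; \eta^2L^3\,\frac{(1+\theta)^{m-t}-1}{\theta},
\]
so $c_t$ is decreasing in $t$ and $c_0$ is the largest value. Plugging in the parameters, $\eta\beta = \mu_0/n^{3\alpha/2}$ and $2\eta^2L^2 = 2\mu_0^2/n^{2\alpha}$, and using $\alpha\leq 1$ and $n\geq 1$, yields $\theta \leq (\mu_0 + 2\mu_0^2)/n^{3\alpha/2}$ and $\theta \geq \mu_0/n^{3\alpha/2}$.

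\textbf{Step 2: Bound $c_0$ by a constant multiple of $L/n^{\alpha/2}$.} Using $(1+\theta)^m \leq e^{m\theta}$ together with $m \leq n^{3\alpha/2}/(3\mu_0)$, we have $m\theta \leq (1+2\mu_0)/3$, which is bounded by a constant (close to $1/3$) as long as $\mu_0$ is a fixed constant in $(0,1)$. Combining with $\eta^2 L^3 = \mu_0^2 L/n^{2\alpha}$ and the lower bound on $\theta$ from Step 1,
\[
c_0 \;\leq\; \eta^2 L^3 \cdot \frac{e^{(1+2\mu_0)/3}-1}{\theta} \;\leq\; \mu_0\bigl(e^{(1+2\mu_0)/3}-1\bigr)\,\frac{L}{n^{\alpha/2}}.
\]

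\textbf{Step 3: Lower bound $\Gamma_t$.} Since $c_t \leq c_0$ for every $t$,
\[
\Gamma_t \;=\; \eta\Bigl(1 - \tfrac{c_{t+1}}{\beta} - \eta L - 2c_{t+1}\eta\Bigr) \;\geq\; \eta\Bigl(1 - \tfrac{c_0}{\beta} - \eta L - 2c_0\eta\Bigr).
\]
Using $\beta = L/n^{\alpha/2}$, the bound from Step 2 gives $c_0/\beta \leq \mu_0(e^{(1+2\mu_0)/3}-1)$; moreover $\eta L = \mu_0/n^{\alpha}\leq \mu_0$ and $2c_0\eta \leq 2\mu_0^2(e^{(1+2\mu_0)/3}-1)/n^{3\alpha/2}\leq 2\mu_0^2(e^{(1+2\mu_0)/3}-1)$. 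Therefore the bracket is at least
\[
1 - \mu_0\bigl(e^{(1+2\mu_0)/3}-1\bigr)\bigl(1+2\mu_0\bigr) - \mu_0,
\]
which is a continuous function of $\mu_0$ equal to $1$ at $\mu_0 = 0$. Hence one can fix a universal constant $\mu_0 \in (0,1)$ small enough so that the bracket is bounded below by a universal constant $\nu' > 0$, uniformly in $n$ and $\alpha\in(0,1]$. Setting $\nu := \mu_0 \nu'$ gives $\gamma_n \geq \eta \nu' = \nu/(Ln^{\alpha})$, as desired.

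\textbf{Step 4: Conclude.} Plugging the bound $\gamma_n \geq \nu/(Ln^{\alpha})$ into Theorem~\ref{thm:nonconvex-inter} yields the claimed rate $\mathbb{E}[\|\nabla f(x_a)\|^2] \leq Ln^{\alpha}[f(x^0)-f(x^*)]/(T\nu)$.

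\textbf{Main obstacle.} The only delicate part is Step 3: one has to verify that the constant $\mu_0$ can indeed be chosen independently of $n$ and $\alpha$ so that the bracket stays uniformly bounded below by a positive constant. This is a matter of carefully tracking which error terms decay in $n$ and which are genuinely of constant order; since all ``bad'' terms are bounded by functions of $\mu_0$ that vanish as $\mu_0\to 0$, a sufficiently small but fixed $\mu_0$ works uniformly, which is what makes the constants $(\mu_0,\nu)$ in the statement truly universal.
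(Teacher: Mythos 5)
Your proof is correct and takes essentially the same route as the paper's: solve the recurrence for $c_0$, bound $\theta \le 3\mu_0/n^{3\alpha/2}$ so that $(1+\theta)^m$ stays below a universal constant, deduce $c_0 = O(\mu_0 L / n^{\alpha/2})$, and lower bound $\gamma_n$ term by term using $c_t \le c_0$, taking $\mu_0$ small. The only cosmetic differences are that you use $(1+\theta)^m \le e^{m\theta}$ where the paper invokes the monotone limit of $(1+1/l)^l \to e$, and you argue by continuity of the bracket at $\mu_0 = 0$ where the paper exhibits the explicit choice $\mu_0 = 1/4$, $\nu = 1/40$.
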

\begin{proof}
  For our analysis, we will require an upper bound on $c_{0}$. We observe that $c_0 = \tfrac{\mu_0^2L}{n^{2\alpha}} \tfrac{(1 + \theta)^m - 1}{\theta}$ where $\theta = 2\eta^2L^2 + \eta\beta$. This is obtained using the relation $c_{t} = c_{t+1}(1 + \eta\beta + 2\eta^2L^2 ) +  \eta^2L^3$ and the fact that $c_m = 0$. Using the specified values of $\beta$ and $\eta$ we have
  \begin{align*}
    \theta = 2\eta^2L^2 + \eta\beta = \frac{2\mu_0^2}{n^{2\alpha}} + \frac{\mu_0}{n^{3\alpha/2}} \leq \frac{3\mu_0}{n^{3\alpha/2}}.
  \end{align*}
  The above inequality follows since $\mu_0 \leq 1$ and $n \geq 1$. Using the above bound on $\theta$, we get 
  \begin{align}
    \label{eq:c0-bound}
    c_0 &= \frac{\mu_0^2L}{n^{2\alpha}} \frac{(1 + \theta)^m - 1}{\theta} = \frac{\mu_0L ((1 + \theta)^m - 1)}{2\mu_0 + n^{\alpha/2}} \nonumber \\
        &\leq \frac{\mu_0L ((1 + \frac{3\mu_0}{n^{3\alpha/2}})^{\lfloor \nicefrac{n^{3\alpha/2}}{3\mu_0} \rfloor} - 1)}{2\mu_0 + n^{\alpha/2}} \nonumber \\
        &\leq n^{-\alpha/2}(\mu_0L (e - 1)),
  \end{align}
  wherein the second inequality follows upon noting that $(1 + \frac{1}{l})^l$ is increasing for $l>0$ and $\lim_{l \rightarrow \infty} (1 + \frac{1}{l})^l = e$ (here $e$ is the Euler's number). Now we can lower bound $\gamma_n$, as
  \begin{align*}
    \gamma_n &= \min_t \bigl(\eta - \tfrac{c_{t+1}\eta}{\beta} - \eta^2L - 2c_{t+1}\eta^2\bigr) \\
             &\geq \bigl(\eta - \tfrac{c_{0}\eta}{\beta} - \eta^2L - 2c_{0}\eta^2\bigr) \geq \frac{\nu}{Ln^{\alpha}},
  \end{align*}
  where $\nu$ is a constant independent of $n$. The first inequality holds since $c_t$ decreases with $t$. The second inequality holds since (a) $c_0/\beta$ is upper bounded by a constant independent of $n$ as $c_0/\beta \leq \mu_0(e-1)$ (follows from Equation~\eqref{eq:c0-bound}), (b) $\eta^2L \leq \mu_0 \eta$ and (c) $2c_0\eta^2 \leq 2\mu_0^2(e-1)\eta$ (follows from Equation~\eqref{eq:c0-bound}). By choosing $\mu_0$ (independent of $n$) appropriately, one can ensure that $\gamma_n \geq \nu/(Ln^{\alpha})$ for some universal constant $\nu$. For example, choosing $\mu_0 = 1/4$, we have $\gamma_n \geq \nu/(Ln^{\alpha})$ with $\nu = 1/40$. Substituting the above lower bound in Equation~\eqref{eq:nonconvex-cor-eq1}, we obtain the desired result.
\end{proof}

\section*{Proof of Corollary~\ref{cor:svrg-nonconvex-oracle-gen}}
\begin{corollary*}
  Suppose $f \in \Fc_n$. Then the IFO complexity of Algorithm~\ref{alg:svrg} (with parameters from Theorem~\ref{thm:nonconvex-gen}) for achieving an $\epsilon$-accurate solution is:
  \begin{align*}
    \text{IFO calls} =
    \begin{cases}
      O\left(n + (n^{1 - \frac{\alpha}{2}}/\epsilon)\right),  & \text{if } \alpha < 2/3, \\
      O\left(n + (n^{\alpha}/\epsilon)\right), & \text{if } \alpha \geq 2/3.
    \end{cases}
  \end{align*}
\end{corollary*}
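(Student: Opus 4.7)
This corollary is essentially an arithmetic consequence of Theorem~\ref{thm:nonconvex-gen} combined with a careful accounting of the per-epoch IFO cost of Algorithm~\ref{alg:svrg}. My plan is (i) invert the convergence rate in Theorem~\ref{thm:nonconvex-gen} to find the minimum number of inner iterations $T$ needed for $\epsilon$-accuracy, (ii) express the total IFO cost in terms of $T$, $m$, and $n$, and (iii) split into two cases based on whether $m$ is smaller or larger than $n$, which happens precisely at $\alpha=2/3$ for the choice $m=\lfloor n^{3\alpha/2}/(3\mu_0)\rfloor$.

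\textbf{Step 1 (iteration count).} From Theorem~\ref{thm:nonconvex-gen} we have $\mathbb{E}[\|\nabla f(x_a)\|^2] \le Ln^{\alpha}[f(x^0)-f(x^*)]/(T\nu)$. Treating $L$, $\nu$, and $[f(x^0)-f(x^*)]$ as constants (as the statement does), we need $T = \Omega(n^{\alpha}/\epsilon)$ to guarantee $\mathbb{E}[\|\nabla f(x_a)\|^2]\le\epsilon$. We may pick $T$ to be the smallest multiple of $m$ that is at least this threshold, which costs only a constant factor.

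\textbf{Step 2 (IFO accounting).} Each outer epoch of Algorithm~\ref{alg:svrg} makes $n$ IFO calls to compute $g^{s+1}$ and then $2$ IFO calls per inner iteration (one at $x_t^{s+1}$, one at $\tilde{x}^s$) for $m$ inner steps, for a total of $n+2m$ IFO calls per epoch. The number of epochs is $S=T/m$, so the total IFO complexity is
\begin{equation*}
S(n+2m) \;=\; \frac{T(n+2m)}{m} \;=\; \frac{Tn}{m} + 2T.
\end{equation*}
Plugging in $m=\Theta(n^{3\alpha/2})$ yields $Tn/m = \Theta(T\,n^{1-3\alpha/2})$.

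\textbf{Step 3 (case split at $\alpha = 2/3$).} When $\alpha < 2/3$, we have $m < n$, so $n/m \ge 1$ and the $Tn/m$ term dominates $2T$. Substituting $T=\Theta(n^{\alpha}/\epsilon)$ gives total cost $\Theta(n^{\alpha+1-3\alpha/2}/\epsilon) = \Theta(n^{1-\alpha/2}/\epsilon)$. When $\alpha \ge 2/3$, we have $m\ge n$, so $n/m \le 1$ and the $2T$ term dominates, giving total cost $\Theta(T) = \Theta(n^{\alpha}/\epsilon)$. In both cases we add the $O(n)$ cost of the initial full gradient and absorb any rounding for $T$ being a multiple of $m$ into constants, yielding the two-case bound stated in the corollary.

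\textbf{Main obstacle.} There is no real obstacle here since the argument is bookkeeping once Theorem~\ref{thm:nonconvex-gen} is in hand; the only mildly delicate point is making sure the minimum feasible value of $T$ (namely $T\ge m$, since $T$ must be a positive multiple of $m$) is consistent with the stated bound. For $\alpha\ge 2/3$ this requires checking that $n^{\alpha}/\epsilon$ is at least $m=\Theta(n^{3\alpha/2})$ in the regime where the bound is informative, i.e., $\epsilon\lesssim n^{-\alpha/2}$; otherwise the $O(n)$ additive term in $O(n+n^{\alpha}/\epsilon)$ already covers the one-epoch cost since $n^{3\alpha/2}$ is then itself $O(n^{\alpha}/\epsilon)$ up to the additive $n$. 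This is easily handled and does not require any new ideas.
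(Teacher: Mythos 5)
Your proposal is correct and follows essentially the same route as the paper's own proof: invert the rate from Theorem~\ref{thm:nonconvex-gen} to get $T = \Theta(n^{\alpha}/\epsilon)$, charge $n$ IFO calls per epoch for the full gradient plus $O(m)$ for the inner loop, and split at $\alpha = 2/3$ according to whether $m = \Theta(n^{3\alpha/2})$ is below or above $n$. Your version merely writes out the bookkeeping (the $Tn/m + 2T$ decomposition and the feasibility of $T \ge m$) more explicitly than the paper does.
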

\begin{proof}
  This result follows from Theorem~\ref{thm:nonconvex-gen} and the fact that $m = \lfloor n^{3\alpha/2}/(3\mu_0) \rfloor$. Suppose $\alpha < 2/3$, then $m = o(n)$. However, $n$ IFO calls are invested in calculating the average gradient at the end of each epoch. In other words, computation of average gradient requires $n$ IFO calls for every $m$ iterations of the algorithm. Using this relationship, we get $O\bigl(n + (n^{1 - \tfrac{\alpha}{2}}/\epsilon)\bigr)$ in this case.

  On the other hand, when $\alpha \geq 2/3$, the total number of IFO calls made by Algorithm~\ref{alg:svrg} in each epoch is $\Omega(n)$ since $m = \lfloor n^{3\alpha/2}/(3\mu_0) \rfloor$. Hence, the oracle calls required for calculating the average gradient (per epoch) is of lower order, leading to $O\bigl(n + (n^{\alpha}/\epsilon)\bigr)$ IFO calls.
\end{proof}

\section{GD-SVRG}

\section*{Proof of Theorem~\ref{thm:gd-svrg-thm1}}
\begin{theorem*}
  Suppose $f$ is $\tau$-gradient dominated where $\tau > n^{1/3}$. Then, the iterates of Algorithm~\ref{alg:gd-svrg} with $T = \lceil 2L\tau n^{2/3}/\nu_1 \rceil$, $m = \lfloor n/(3\mu_1) \rfloor$, $\eta_t = \mu_1/(Ln^{2/3})$ for all $0 \leq t \leq m-1$ and $p_{m} = 1$ and $p_i = 0$ for all $0 \leq i < m$ satisfy
  $$\mathbb{E}[\|\nabla f(x^k)\|^2] \leq 2^{-k}[\| \nabla f(x^{0}) \|^2].
  $$
  Here $\mu_1$ and $\nu_1$ are the constants used in Corollary~\ref{cor:nonconvex}.
\end{theorem*}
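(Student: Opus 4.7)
The plan is to invoke Corollary~\ref{cor:nonconvex} on each outer iteration of Algorithm~\ref{alg:gd-svrg} and then use the $\tau$-gradient dominance property to convert the resulting function-value bound back into a gradient-norm bound. Concretely, each outer step $k$ of GD-SVRG runs SVRG with input $x^{k-1}$ and produces $x^k$; the parameters prescribed in the theorem ($m = \lfloor n/(3\mu_1)\rfloor$, $\eta_t = \mu_1/(Ln^{2/3})$, $p_m = 1$) are precisely those of Corollary~\ref{cor:nonconvex}, so that corollary gives, conditionally on $x^{k-1}$,
\begin{equation*}
\mathbb{E}[\|\nabla f(x^k)\|^2 \mid x^{k-1}] \;\le\; \frac{L n^{2/3}\,[f(x^{k-1}) - f(x^*)]}{T \nu_1}.
\end{equation*}

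Next, I would plug in the choice $T = \lceil 2L\tau n^{2/3}/\nu_1\rceil$, which makes the factor $L n^{2/3}/(T\nu_1)$ at most $1/(2\tau)$, yielding
\begin{equation*}
\mathbb{E}[\|\nabla f(x^k)\|^2 \mid x^{k-1}] \;\le\; \frac{f(x^{k-1}) - f(x^*)}{2\tau}.
\end{equation*}
Applying the $\tau$-gradient-dominance inequality~\eqref{eq:2} to the right-hand side then gives
\begin{equation*}
\mathbb{E}[\|\nabla f(x^k)\|^2 \mid x^{k-1}] \;\le\; \tfrac{1}{2}\|\nabla f(x^{k-1})\|^2.
\end{equation*}

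Finally, I would take total expectations (using the tower property) and iterate this one-step contraction $k$ times starting from the deterministic input $x^0$, obtaining $\mathbb{E}[\|\nabla f(x^k)\|^2] \le 2^{-k}\|\nabla f(x^0)\|^2$, which is the claimed bound. The only mild subtlety is verifying that the hypothesis $\tau > n^{1/3}$ is consistent with choosing $T$ large enough to be a positive multiple of $m$ (so that Corollary~\ref{cor:nonconvex} applies verbatim), which is immediate since $T = \lceil 2L\tau n^{2/3}/\nu_1\rceil \ge m$ for $\tau > n^{1/3}$ up to absorbing constants into the ceiling. There is no real obstacle here beyond a careful bookkeeping of conditional expectations: the proof is essentially a one-line reduction once Corollary~\ref{cor:nonconvex} is in hand, and the gradient-dominance assumption is exactly what is needed to close the loop from function values back to squared gradient norms.
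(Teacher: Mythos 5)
Your proposal is correct and follows essentially the same route as the paper's proof: apply Corollary~\ref{cor:nonconvex} to each outer iteration, substitute the prescribed $T$ to make the prefactor at most $1/(2\tau)$, invoke $\tau$-gradient dominance to contract by a factor of $1/2$, and iterate. Your additional care with conditioning and the tower property is sound bookkeeping that the paper leaves implicit.
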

\begin{proof}
  Corollary~\ref{cor:nonconvex} shows that the iterates of Algorithm~\ref{alg:gd-svrg} satisfy
\begin{align*}
\mathbb{E}[\| \nabla f(x^k) \|^2] &\leq \frac{Ln^{2/3} \mathbb{E}[f(x^{k-1}) - f(x^*)]}{T\nu_1}.
\end{align*}
Substituting the specified value of $T$ in the above inequality, we have
\begin{align*}
\mathbb{E}[\| \nabla f(x^k) \|^2] &\leq 
  \frac{1}{2\tau}\bigl(\mathbb{E}[f(x^{k-1}) - f(x^*)]\bigr) \\ &\leq\tfrac{1}{2}\mathbb{E}[\| \nabla f(x^{k-1})\|^2].
\end{align*}
The second inequality follows from $\tau$-gradient dominance of the function $f$.
\end{proof}

\section*{Proof of Theorem~\ref{thm:gd-svrg-thm2}}
\begin{theorem*}
  If $f$ is $\tau$-gradient dominated ($\tau > n^{1/3}$), then with $T = \lceil 2L\tau n^{2/3}/\nu_1 \rceil$, $m = \lfloor n/(3\mu_1) \rfloor$, $\eta_t = \mu_1/(Ln^{2/3})$ for $0 \leq t \leq m-1$ and $p_{m} = 1$ and $p_i = 0$ for all $0 \leq i < m$, the iterates of Algorithm~\ref{alg:gd-svrg} satisfy
  $$\mathbb{E}[f(x^k) - f(x^*)] \leq 2^{-k}[f(x^0) - f(x^*)].$$
  Here $\mu_1$, $\nu_1$ are as in Corollary~\ref{cor:nonconvex}; $x^*$ is an optimal solution.
\end{theorem*}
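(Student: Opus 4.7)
The plan is to mirror the argument used for Theorem~\ref{thm:gd-svrg-thm1}, but apply the $\tau$-gradient dominance inequality in the opposite direction: instead of converting a function-value bound on the input iterate into a gradient bound on the output, I would use the gradient bound on the output to control the function-value suboptimality at the output.

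First, I would invoke Corollary~\ref{cor:nonconvex} on the inner call to SVRG that produces $x^k$ from $x^{k-1}$. With the stated choices of $m$, $\eta_t$, and $p_i$, Corollary~\ref{cor:nonconvex} gives
\begin{align*}
\mathbb{E}[\|\nabla f(x^k)\|^2] \leq \frac{L n^{2/3}\,\mathbb{E}[f(x^{k-1}) - f(x^*)]}{T \nu_1}.
\end{align*}
Plugging in $T = \lceil 2L\tau n^{2/3}/\nu_1 \rceil$ immediately yields
\begin{align*}
\mathbb{E}[\|\nabla f(x^k)\|^2] \leq \frac{1}{2\tau}\,\mathbb{E}[f(x^{k-1}) - f(x^*)].
\end{align*}

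Next, I would apply the defining property of $\tau$-gradient dominance, namely $f(x^k) - f(x^*) \leq \tau \|\nabla f(x^k)\|^2$, take expectations, and combine with the previous display to obtain the one-step contraction
\begin{align*}
\mathbb{E}[f(x^k) - f(x^*)] \leq \tau\,\mathbb{E}[\|\nabla f(x^k)\|^2] \leq \tfrac{1}{2}\,\mathbb{E}[f(x^{k-1}) - f(x^*)].
\end{align*}
A straightforward induction on $k$ then delivers $\mathbb{E}[f(x^k) - f(x^*)] \leq 2^{-k}[f(x^0) - f(x^*)]$, which is the claim.

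There is no real obstacle here beyond invoking the previously established results in the correct order; the only subtlety worth flagging is that gradient dominance is used as an upper bound on the suboptimality in terms of the squared gradient norm (the reverse of how it was used in the proof of Theorem~\ref{thm:gd-svrg-thm1}), and that the expectation in Corollary~\ref{cor:nonconvex} is taken over the internal randomness of the SVRG subroutine, so to combine the per-epoch bounds I would use the tower property of conditional expectation over the independent randomness of the successive calls to SVRG inside Algorithm~\ref{alg:gd-svrg}.
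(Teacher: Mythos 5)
Your proposal is correct and follows exactly the paper's own argument: apply Corollary~\ref{cor:nonconvex} with the stated $T$ to get $\mathbb{E}[\|\nabla f(x^k)\|^2] \leq \tfrac{1}{2\tau}\mathbb{E}[f(x^{k-1}) - f(x^*)]$, then use $f(x^k) - f(x^*) \leq \tau\|\nabla f(x^k)\|^2$ to obtain the halving of the expected suboptimality per outer iteration, and induct. The subtlety you flag about using gradient dominance on the output iterate (rather than the input, as in Theorem~\ref{thm:gd-svrg-thm1}) is precisely the point the paper's proof makes as well.
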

\begin{proof}
  The proof mimics that of Theorem~\ref{thm:gd-svrg-thm1}; now we have the following condition on the iterates of Algorithm~\ref{alg:gd-svrg}:
  \begin{align}
    \label{eq:gd-svrg-thm2-eq1}
    \mathbb{E}[\| \nabla f(x^k) \|^2] \leq \frac{\mathbb{E}[f(x^{k-1}) - f(x^*)]}{2\tau}.
  \end{align}
  However, $f$ is $\tau$-gradient dominated, so $\mathbb{E}[\| \nabla f(x^k) \|^2] \geq  \mathbb{E}[f(x^k) - f(x^*)]/\tau$, which combined with Equation~\eqref{eq:gd-svrg-thm2-eq1} concludes the proof.
\end{proof}

\section{Convex SVRG: Convergence Rate}

\section*{Proof of Theorem~\ref{thm:convex}}
\begin{theorem*}
  If $f_i$ is convex for all $i \in [n]$, $p_i = 1/m$ for \fromto{0}{i}{m-1}, and $p_m = 0$, then for Algorithm~\ref{alg:svrg}, we have
  \begin{align*}
    &\mathbb{E}[\|\nabla f(x_a)\|^2] \leq \frac{L\|x^{0} - x^*\|^2 + 4mL^2\eta^2 [f(x^{0}) - f(x^*)]}{T\eta(1 - 4L\eta)},
  \end{align*}
  where $x^*$ is optimal for~\eqref{eq:1} and $x_a$ is the output of Algorithm~\ref{alg:svrg}.
\end{theorem*}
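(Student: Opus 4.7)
The plan is to reduce the stationarity gap $\|\nabla f(x_a)\|^2$ to a function–value gap and then bound the latter by a telescoping argument. Since each $f_i$ is convex, so is $f$, so smoothness plus convexity gives $\|\nabla f(x)\|^2 \leq 2L[f(x) - f(x^*)]$. Averaging over the uniform random choice of $x_a$ from $\{x_t^{s+1}\}$ yields $\mathbb{E}[\|\nabla f(x_a)\|^2] \leq (2L/T)\,\Sigma$, where $\Sigma := \sum_{s=0}^{S-1}\sum_{t=0}^{m-1}\mathbb{E}[f(x_t^{s+1}) - f(x^*)]$. The entire proof therefore reduces to showing
$$2\eta(1 - 4L\eta)\,\Sigma \;\leq\; \|x^0 - x^*\|^2 + 4mL\eta^2\,[f(x^0) - f(x^*)].$$

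For the per-step step bound I would expand $\|x_{t+1}^{s+1} - x^*\|^2$ using the SVRG update, take conditional expectation (so $v_t^{s+1}$ is replaced in expectation by $\nabla f(x_t^{s+1})$), use convexity of $f$ to lower-bound $\langle \nabla f(x_t^{s+1}), x_t^{s+1} - x^*\rangle \geq f(x_t^{s+1}) - f(x^*)$, and finally apply the Johnson–Zhang variance bound $\mathbb{E}\|v_t^{s+1}\|^2 \leq 4L[f(x_t^{s+1}) - f(x^*)] + 4L[f(\tilde x^s) - f(x^*)]$ (which genuinely uses convexity of each $f_i$ via co-coercivity). This produces
$$\mathbb{E}\|x_{t+1}^{s+1} - x^*\|^2 \leq \mathbb{E}\|x_t^{s+1} - x^*\|^2 - 2\eta(1-2L\eta)\mathbb{E}[f(x_t^{s+1}) - f(x^*)] + 4L\eta^2\,\mathbb{E}[f(\tilde x^s) - f(x^*)].$$
Summing over $t = 0, \dots, m-1$ within an epoch gives a per-epoch inequality; here I would use the important algorithmic detail that $x_0^{s+1} = x_m^s$ (the inner loop carries over from the previous epoch's last iterate, while $\tilde x^s$ is used only for the full-gradient anchor). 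With this identification, summing further over $s$ causes the distance terms to telescope down to $\|x^0 - x^*\|^2$, losing only the nonnegative end term $\mathbb{E}\|x_m^S - x^*\|^2$.

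After the two-level telescoping we have $2\eta(1-2L\eta)\,\Sigma \leq \|x^0 - x^*\|^2 + 4mL\eta^2\sum_s \mathbb{E}[f(\tilde x^s) - f(x^*)]$. To close the loop I would exploit the choice $p_i = 1/m$, $p_m = 0$: for $s \geq 1$, $\tilde x^s$ is the uniform average of the previous epoch's inner iterates, so Jensen's inequality applied to convex $f$ gives $\mathbb{E}[f(\tilde x^s) - f(x^*)] \leq (1/m)\sum_t \mathbb{E}[f(x_t^s) - f(x^*)]$, and $\tilde x^0 = x^0$ handles the base case; summing gives $\sum_s \mathbb{E}[f(\tilde x^s) - f(x^*)] \leq [f(x^0) - f(x^*)] + \Sigma/m$. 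Substituting this back and moving the resulting $4L\eta^2 \Sigma$ term to the left-hand side converts $2\eta(1-2L\eta)$ into $2\eta(1-4L\eta)$, which is exactly the denominator in the theorem; dividing by $T$ and applying $\mathbb{E}[\|\nabla f(x_a)\|^2] \leq (2L/T)\Sigma$ completes the proof. The main subtlety is assembling these pieces in the right order: the per-step argument alone yields only $(1-2L\eta)$, and it is the Jensen-based re-absorption of $\sum_s \mathbb{E}[f(\tilde x^s) - f(x^*)]$ into the very sum $\Sigma$ we are bounding that produces the extra $2L\eta$ and keeps the right-hand side free of any $S$-dependence.
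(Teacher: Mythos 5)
Your proposal is correct and follows essentially the same route as the paper's proof: the same per-step expansion of $\mathbb{E}\|x^{s+1}_{t+1}-x^*\|^2$ combined with the convexity-based variance bound $\mathbb{E}\|v^{s+1}_t\|^2\le 4L[f(x^{s+1}_t)-f(x^*)+f(\tilde x^s)-f(x^*)]$, the same use of $p_i=1/m$ to relate $f(\tilde x^{s+1})$ to the epoch average of $f(x^{s+1}_t)$, and the same final reduction $\mathbb{E}\|\nabla f(x_a)\|^2\le 2L\,\mathbb{E}[f(x_a)-f(x^*)]$. The only difference is organizational: the paper absorbs the extra $4L\eta^2$ term into a per-epoch Lyapunov function $P^s=\mathbb{E}\|x^s_m-x^*\|^2+4mL\eta^2\mathbb{E}[f(\tilde x^s)-f(x^*)]$ and telescopes $P^s-P^{s+1}$, whereas you telescope the distance terms globally and then re-absorb $\sum_s\mathbb{E}[f(\tilde x^s)-f(x^*)]$ into $\Sigma$ at the end — the same computation in a different order.
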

\begin{proof}
Consider the following sequence of inequalities:
\begin{align*}
&\mathbb{E}[\|x^{s+1}_{t+1} - x^*\|^2] = \mathbb{E}[\|x^{s+1}_{t} - \eta v^{s+1}_t - x^*\|^2] \\
&\leq \mathbb{E}[\|x^{s+1}_{t} - x^*\|^2] + \eta^2 \mathbb{E}[\|v^{s+1}_t\|^2] \\
& \qquad \qquad - 2\eta \mathbb{E}[\langle v^{s+1}_t, x^{s+1}_t - x^* \rangle] \\
&\leq  \mathbb{E}[\|x^{s+1}_{t} - x^*\|^2] + \eta^2 \mathbb{E}[\|v^{s+1}_t\|^2] \\
& \qquad \qquad - 2\eta \mathbb{E}[f(x^{s+1}_t) - f(x^*)] \\
&\leq \mathbb{E}[\|x^{s+1}_{t} - x^*\|^2] - 2\eta(1 - 2L\eta) \mathbb{E}[f(x^{s+1}_t) - f(x^*)] \\
& \qquad \qquad + 4L\eta^2 \mathbb{E}[f(\tilde{x}^{s}) - f(x^*)] \\
&= \mathbb{E}[\|x^{s+1}_{t} - x^*\|^2] - 2\eta(1 - 4L\eta) \mathbb{E}[f(x^{s+1}_t) - f(x^*)] \\
& \qquad \qquad + 4L\eta^2 \mathbb{E}[f(\tilde{x}^{s}) - f(x^*)] - 4L\eta^2 \mathbb{E}[f(x^{s+1}_t) - f(x^*)].
\end{align*}
The second inequality uses unbiasedness of the \svrg update and convexity of $f$. The third inequality follows from Lemma~\ref{lem:var-lemma}. Defining the Lyapunov function
$$
P^s := \mathbb{E}[\|x^{s}_{m} - x^*\|^2] + 4mL\eta^2 \mathbb{E}[f(\tilde{x}^{s}) - f(x^*)],
$$
and summing the above inequality over $t$, we get
\begin{align*}
  2\eta(1 - 4L\eta) \sum_{t=0}^{m-1} \mathbb{E}[f(x^{s+1}_t) - f(x^*)] \leq P^{s} - P^{s+1}.
\end{align*}
This due is to the fact that 
\begin{align*}
P^{s+1} =  \mathbb{E}[\|x^{s+1}_{m} - x^*\|^2] + 4mL\eta^2 \mathbb{E}[f(\tilde{x}^{s+1}) - f(x^*)] \\
= \mathbb{E}[\|x^{s+1}_{m} - x^*\|^2] + 4L\eta^2 \sum_{t=0}^{m-1} \mathbb{E}[f(x^{s+1}_t) - f(x^*)].
\end{align*}
The above equality uses the fact that $p_m = 0$ and $p_i = 1/m$ for $0 \leq i < m$. Summing over all epochs and telescoping we then obtain
\begin{align*}
\mathbb{E}[f(x_a) - f(x^*)] \leq P^0\bigl(2T\eta(1 - 4L\eta)\bigr)^{-1}.
\end{align*}
The inequality also uses the definition of $x_a$ given in Alg~\ref{alg:svrg}. On this inequality we use Lemma~\ref{lem:grad-lemma}, which yields
\begin{align*}
&\mathbb{E}[\|\nabla f(x_a)\|^2] \leq 2L\mathbb{E}[f(x_a) - f(x^*)] \\ &\leq \frac{L\|x^{0} - x^*\|^2 + 4mL^2\eta^2 [f(x^{0}) - f(x^*)]}{T\eta(1 - 4L\eta)}.\qedhere
\end{align*}
\end{proof}

It is easy to see that we can obtain convergence rates for $E[f(x_a)- f(x^*)]$ from the above reasoning. This leads to a \emph{direct} analysis of $\svrg$ for convex functions. 


\section{Minibatch Nonconvex SVRG}

\section*{Proof of Theorem~\ref{thm:nonconvex-minibatch}}

\begin{algorithm}[tb]\small
   \caption{Mini-batch SVRG}
   \label{alg:minibatch-svrg}
\begin{algorithmic}[1]
   \STATE {\bfseries Input:} $\tilde{x}^0 = x^0_m = x^0 \in \mathbb{R}^d$,  epoch length $m$, step sizes $\{\eta_i > 0\}_{i=0}^{m-1}$, $S = \lceil T/m \rceil$, discrete probability distribution $\{p_i\}_{i=0}^{m}$, mini-batch size $b$
   \FOR{$s=0$ {\bfseries to} $S-1$}
   \STATE $x^{s+1}_0 = x^{s}_m$
   \STATE $g^{s+1} = \frac{1}{n} \sum_{i=1}^n \nabla f_{i}(\tilde{x}^{s})$
   \FOR{$t=0$ {\bfseries to} $m-1$}
   \STATE Choose a mini-batch (uniformly random with replacement) $I_t \subset [n]$ of size $b$
   \STATE $u_t^{s+1} =  \frac{1}{b} \sum_{i_t \in I_t} (\nabla f_{i_t}(x^{s+1}_t) - \nabla f_{i_t}(\tilde{x}^{s})) + g^{s+1}$
   \STATE $x^{s+1}_{t+1} = x^{s+1}_{t} - \eta_t u_t^{s+1} $
   \ENDFOR
   \STATE $\tilde{x}^{s+1} = \sum_{i=0}^{m} p_i x_{i}^{s+1}$
   \ENDFOR
   \STATE {\bfseries Output:} Iterate $x_a$ chosen uniformly random from $\{\{x^{s+1}_t\}_{t=0}^{m-1}\}_{s=0}^{S-1}$.
\end{algorithmic}
\end{algorithm}

The proofs essentially follow along the lines of Lemma~\ref{lem:nonconvex-svrg}, Theorem~\ref{thm:nonconvex-inter} and Theorem~\ref{thm:nonconvex-gen} with the added complexity of mini-batch. We first prove few intermediate results before proceeding to the proof of Theorem~\ref{thm:nonconvex-minibatch}.

\begin{lemma}
  \label{lem:nonconvex-minibatch-svrg}
  Suppose we have
    \begin{align*}
     &\overline{R}^{s+1}_{t} := \mathbb{E}[f(x^{s+1}_{t}) + \overline{c}_{t} \|x^{s+1}_{t} - \tilde{x}^{s}\|^2],  \\
     &\overline{c}_{t} = \overline{c}_{t+1}(1 + \eta_t\beta_t + \frac{2\eta_t^2L^2}{b} ) +  \frac{\eta_t^2L^3}{b},
     \end{align*}
     for $\ 0 \leq s \leq S-1$ and  $0 \leq t \leq m-1$ and the parameters $\eta_t, \beta_t$ and $\overline{c}_{t+1}$ are chosen such that
     $$
     \left(\eta_t - \frac{\overline{c}_{t+1}\eta_t}{\beta_t} - \eta_t^2L - 2\overline{c}_{t+1}\eta_t^2\right) \geq 0. 
     $$
     Then the iterates $x^{s+1}_t$ in the mini-batch version of Algorithm~\ref{alg:svrg} i.e., Algorithm~\ref{alg:minibatch-svrg} with mini-batch size $b$ satisfy the bound:
  \begin{align*}
    \mathbb{E}[\|\nabla f(x^{s+1}_{t})\|^2] \leq \frac{\overline{R}^{s+1}_{t} - \overline{R}^{s+1}_{t+1}}{\left(\eta_t - \frac{\overline{c}_{t+1}\eta_t}{\beta_t} - \eta_t^2L - 2\overline{c}_{t+1}\eta_t^2\right)},
  \end{align*}
\end{lemma}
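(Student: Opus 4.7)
The plan is to mirror the structure of the proof of Lemma~\ref{lem:nonconvex-svrg}, with the single substantive change being that the second-moment bound on the stochastic direction picks up a $1/b$ factor from mini-batching. Concretely, I would start by invoking $L$-smoothness of $f$ at the pair $(x_t^{s+1}, x_{t+1}^{s+1})$ and then substitute the update $x_{t+1}^{s+1} = x_t^{s+1} - \eta_t u_t^{s+1}$, where sampling $I_t$ uniformly with replacement gives $\mathbb{E}[u_t^{s+1} \mid x_t^{s+1}] = \nabla f(x_t^{s+1})$. This yields
\begin{align*}
\mathbb{E}[f(x_{t+1}^{s+1})] \leq \mathbb{E}\bigl[f(x_t^{s+1}) - \eta_t \|\nabla f(x_t^{s+1})\|^2 + \tfrac{L\eta_t^2}{2}\|u_t^{s+1}\|^2\bigr],
\end{align*}
exactly as in the single-sample case, but with $u_t^{s+1}$ in place of $v_t^{s+1}$.

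Next I would handle the quadratic term in the Lyapunov function $\overline{R}_t^{s+1}$ by expanding $\|x_{t+1}^{s+1} - \tilde{x}^s\|^2 = \|x_t^{s+1} - \tilde{x}^s\|^2 + \eta_t^2\|u_t^{s+1}\|^2 - 2\eta_t\langle \mathbb{E}[u_t^{s+1}], x_t^{s+1} - \tilde{x}^s\rangle$, then applying Young's inequality with parameter $\beta_t$ to the cross term, producing $\tfrac{1}{2\beta_t}\|\nabla f(x_t^{s+1})\|^2 + \tfrac{\beta_t}{2}\|x_t^{s+1}-\tilde{x}^s\|^2$. Combining this with the smoothness inequality and the definition of $\overline{R}_{t+1}^{s+1}$ gives
\begin{align*}
\overline{R}_{t+1}^{s+1} &\leq \mathbb{E}[f(x_t^{s+1})] - \bigl(\eta_t - \tfrac{\overline{c}_{t+1}\eta_t}{\beta_t}\bigr)\mathbb{E}[\|\nabla f(x_t^{s+1})\|^2] \\
&\quad + \bigl(\tfrac{L\eta_t^2}{2} + \overline{c}_{t+1}\eta_t^2\bigr)\mathbb{E}[\|u_t^{s+1}\|^2] + \overline{c}_{t+1}(1+\eta_t\beta_t)\mathbb{E}[\|x_t^{s+1}-\tilde{x}^s\|^2].
\end{align*}

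The only place where mini-batching enters is in bounding $\mathbb{E}[\|u_t^{s+1}\|^2]$. Here I would invoke Lemma~\ref{lem:nonconvex-minibatch-variance-lemma} (the mini-batch analogue of Lemma~\ref{lem:nonconvex-variance-lemma}), which exploits the independence of the $b$ samples drawn with replacement to shave a factor of $b$ off the variance of the SVRG correction, yielding a bound of the form $\mathbb{E}[\|u_t^{s+1}\|^2] \leq \mathbb{E}[\|\nabla f(x_t^{s+1})\|^2] + \tfrac{2L^2}{b}\mathbb{E}[\|x_t^{s+1}-\tilde{x}^s\|^2]$. This is the one place the argument is not a verbatim copy of Lemma~\ref{lem:nonconvex-svrg} and is the main technical input; the factor of $2$ versus the constant in the single-sample lemma is tailored so that the recursion for $\overline{c}_t$ comes out as stated.

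Substituting this variance bound and collecting the coefficient of $\mathbb{E}[\|x_t^{s+1}-\tilde{x}^s\|^2]$, the recursion
\begin{align*}
\overline{c}_t = \overline{c}_{t+1}\bigl(1 + \eta_t\beta_t + \tfrac{2\eta_t^2 L^2}{b}\bigr) + \tfrac{\eta_t^2 L^3}{b}
\end{align*}
makes that coefficient equal to $\overline{c}_t$, so the right-hand side collapses to $\overline{R}_t^{s+1} - (\eta_t - \tfrac{\overline{c}_{t+1}\eta_t}{\beta_t} - \eta_t^2 L - 2\overline{c}_{t+1}\eta_t^2)\mathbb{E}[\|\nabla f(x_t^{s+1})\|^2]$. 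Rearranging and using the nonnegativity hypothesis on the bracketed quantity gives the claimed inequality. The main obstacle is really just the mini-batch variance bound; everything else is a bookkeeping exercise parallel to the $b=1$ case, with the $1/b$ improvement propagating through to both the coefficient on $\|x_t^{s+1}-\tilde{x}^s\|^2$ in the recursion and the additive $\eta_t^2 L^3/b$ term.
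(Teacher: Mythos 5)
Your proposal follows the paper's proof exactly: reuse the argument of Lemma~\ref{lem:nonconvex-svrg} up to the analogue of Equation~\eqref{eq:svrg-proof-eq3} with $u_t^{s+1}$ in place of $v_t^{s+1}$, invoke the mini-batch variance bound of Lemma~\ref{lem:nonconvex-minibatch-variance-lemma}, and let the recursion for $\overline{c}_t$ absorb the coefficient of $\mathbb{E}[\|x_t^{s+1}-\tilde{x}^s\|^2]$. The only slip is that you wrote the variance bound as $\mathbb{E}[\|u_t^{s+1}\|^2] \leq \mathbb{E}[\|\nabla f(x_t^{s+1})\|^2] + \tfrac{2L^2}{b}\mathbb{E}[\|x_t^{s+1}-\tilde{x}^s\|^2]$, missing the factor of $2$ on the first term; your own final coefficient $\eta_t - \tfrac{\overline{c}_{t+1}\eta_t}{\beta_t} - \eta_t^2 L - 2\overline{c}_{t+1}\eta_t^2$ requires that factor, so this is a transcription error rather than a gap.
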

\begin{proof}
Using essentially the same argument as the proof of Lemma.~\ref{lem:nonconvex-svrg} until Equation~\eqref{eq:svrg-proof-eq3}, we have
\begin{align}
& \overline{R}^{s+1}_{t+1} \leq \mathbb{E}[f(x^{s+1}_{t}) - \left(\eta_t - \tfrac{\overline{c}_{t+1}\eta_t}{\beta_t}\right) \|\nabla f(x^{s+1}_{t})\|^2 \nonumber\\
&  \qquad \qquad + \left(\tfrac{L\eta_t^2}{2} + \overline{c}_{t+1}\eta_t^2 \right)\mathbb{E}[\|u^{s+1}_t\|^2] \nonumber\\
&  \qquad \qquad + \left( \overline{c}_{t+1} + \overline{c}_{t+1}\eta_t\beta_t \right) \mathbb{E}\left[\| x^{s+1}_t - \tilde{x}^s \|^2 \right].
\label{eq:minibatch-svrg-proof-eq3}
\end{align}
We use Lemma~\ref{lem:nonconvex-minibatch-variance-lemma} in order to bound $\mathbb{E}[\|u^{s+1}_{t}\|^2]$ in the above inequality. Substituting it in Equation~\eqref{eq:minibatch-svrg-proof-eq3}, we see that
\begin{align*}
& \overline{R}^{s+1}_{t+1} \leq \mathbb{E}[f(x^{s+1}_{t})] \nonumber \\
& \qquad - \left(\eta_t - \tfrac{\overline{c}_{t+1}\eta_t}{\beta_t} - \eta_t^2L - 2\overline{c}_{t+1}\eta_t^2\right) \mathbb{E}[\|\nabla f(x^{s+1}_{t})\|^2] \nonumber\\
& \qquad + \left[\overline{c}_{t+1}\bigl(1 + \eta_t\beta_t + \tfrac{2\eta_t^2L^2}{b}\bigr)+\tfrac{\eta_t^2L^3}{b}\right] 
  \mathbb{E}\left[\| x^{s+1}_t - \tilde{x}^s \|^2 \right] \nonumber \\
& \leq \overline{R}^{s+1}_{t} - \bigl(\eta_t - \tfrac{\overline{c}_{t+1}\eta_t}{\beta_t} - \eta_t^2L - 2\overline{c}_{t+1}\eta_t^2\bigr) \mathbb{E}[\|\nabla f(x^{s+1}_{t})\|^2].
\end{align*}
The second inequality follows from the definition of $\overline{c}_{t}$ and $\overline{R}^{s+1}_{t}$, thus concluding the proof. 
\end{proof}

Our intermediate key result is the following theorem that provides convergence rate of mini-batch $\svrg$.
\begin{theorem}
  Let $\overline{\gamma}_n$ denote the following quantity:
  \begin{align*}
    \overline{\gamma}_n := \min_{0 \leq t \leq m-1}\quad\bigl(\eta - \tfrac{\overline{c}_{t+1}\eta}{\beta} - \eta^2L - 2\overline{c}_{t+1}\eta^2\bigr).
  \end{align*}
  Suppose $\eta_t = \eta$ and $\beta_t = \beta$ for all $t \in \{0, \dots, m-1\}$, $\overline{c}_m = 0$, $\overline{c}_{t} = \overline{c}_{t+1}(1 + \eta_t\beta_t + \frac{2\eta_t^2L^2}{b} ) +  \frac{\eta_t^2L^3}{b}$ for $t \in \{0, \dots, m-1\}$  and $\overline{\gamma}_n > 0$. Further, let $p_{m} = 1$ and $p_{i} = 0$ for $0 \leq i < m$. Then for the output $x_a$ of mini-batch version of Algorithm~\ref{alg:svrg} with mini-batch size $b$, we have
  \begin{align*}
    \mathbb{E}[\|\nabla f(x_a)\|^2] \leq \frac{f(x^{0}) - f(x^*)}{T\overline{\gamma}_n},
  \end{align*}
  where $x^*$ is an optimal solution to~\eqref{eq:1}.
  \label{thm:minibatch-nonconvex-inter}
\end{theorem}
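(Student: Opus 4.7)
The plan is to mimic the proof of Theorem~\ref{thm:nonconvex-inter} almost verbatim, substituting the mini-batch Lyapunov recursion of Lemma~\ref{lem:nonconvex-minibatch-svrg} in place of its single-sample counterpart (Lemma~\ref{lem:nonconvex-svrg}). The quantity $\overline{\gamma}_n$ plays exactly the role of $\gamma_n$, and the fact that $\eta_t$ and $\beta_t$ do not depend on $t$ is what makes the per-step bound telescope cleanly across an epoch.

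First I would apply Lemma~\ref{lem:nonconvex-minibatch-svrg} at each inner iteration $t=0,\dots,m-1$ of epoch $s$, obtaining
\begin{equation*}
\mathbb{E}[\|\nabla f(x^{s+1}_t)\|^2] \;\le\; \frac{\overline{R}^{s+1}_t - \overline{R}^{s+1}_{t+1}}{\overline{\gamma}_n},
\end{equation*}
where the denominator is lower bounded by $\overline{\gamma}_n$ (by its very definition) and is positive by hypothesis. Summing over $t$ telescopes the right-hand side to $(\overline{R}^{s+1}_0 - \overline{R}^{s+1}_m)/\overline{\gamma}_n$.

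Next I would use the choice $p_m = 1$, $p_i = 0$ for $i<m$, together with $\overline{c}_m = 0$ and the update $x^{s+1}_0 = x^s_m$, to simplify the endpoints of the Lyapunov function. Since $\tilde{x}^{s+1} = x^{s+1}_m$, the term $\overline{c}_m\|x^{s+1}_m - \tilde{x}^s\|^2$ vanishes and $\overline{R}^{s+1}_m = \mathbb{E}[f(\tilde{x}^{s+1})]$. Likewise $\tilde{x}^s = x^s_m = x^{s+1}_0$ makes the distance term in $\overline{R}^{s+1}_0$ zero, giving $\overline{R}^{s+1}_0 = \mathbb{E}[f(\tilde{x}^s)]$. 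Thus
\begin{equation*}
\sum_{t=0}^{m-1}\mathbb{E}[\|\nabla f(x^{s+1}_t)\|^2] \;\le\; \frac{\mathbb{E}[f(\tilde{x}^s) - f(\tilde{x}^{s+1})]}{\overline{\gamma}_n}.
\end{equation*}

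Finally I would sum over the $S = T/m$ epochs so the right-hand side telescopes to $(f(\tilde{x}^0) - \mathbb{E}[f(\tilde{x}^S)])/\overline{\gamma}_n$, bound $\mathbb{E}[f(\tilde{x}^S)] \ge f(x^*)$, and use $\tilde{x}^0 = x^0$ to get the cumulative bound $(f(x^0) - f(x^*))/\overline{\gamma}_n$. Dividing by $T$ and invoking the uniform random sampling rule for $x_a$ converts the average into $\mathbb{E}[\|\nabla f(x_a)\|^2]$, yielding the claimed inequality. There is no real obstacle here: the only nontrivial piece is the per-step Lyapunov contraction, which has already been absorbed into Lemma~\ref{lem:nonconvex-minibatch-svrg}; everything else is the identical bookkeeping used in Theorem~\ref{thm:nonconvex-inter}.
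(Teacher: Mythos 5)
Your proposal is correct and follows the paper's own proof essentially verbatim: apply the mini-batch Lyapunov lemma with constant $\eta$ and $\beta$, telescope over the inner loop using $\overline{c}_m=0$ and the choice $p_m=1$, $p_i=0$ to identify $\overline{R}^{s+1}_0 = \mathbb{E}[f(\tilde{x}^s)]$ and $\overline{R}^{s+1}_m = \mathbb{E}[f(\tilde{x}^{s+1})]$, then telescope over epochs and invoke the uniform choice of $x_a$. No gaps.
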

\begin{proof}
  Since $\eta_t = \eta$ for $t \in \{0, \dots, m-1\}$, using  Lemma~\ref{lem:nonconvex-minibatch-svrg} and telescoping the sum, we obtain
\begin{align*}
  \nlsum_{t=0}^{m-1} \mathbb{E}[\|\nabla f(x^{s+1}_{t})\|^2] \leq \frac{\overline{R}^{s+1}_{0} - \overline{R}^{s+1}_{m}}{\overline{\gamma}_n}.
\end{align*}
This inequality in turn implies that
\begin{align*}
  \nlsum_{t=0}^{m-1} \mathbb{E}[\|\nabla f(x^{s+1}_{t})\|^2] \leq \frac{\mathbb{E}[f(\tilde{x}^s) - f(\tilde{x}^{s+1})]}{\overline{\gamma}_n},
\end{align*}
where we used that $\overline{R}^{s+1}_{m} = \mathbb{E}[f(x^{s+1}_m)] = \mathbb{E}[f(\tilde{x}^{s+1})]$ (since $\overline{c}_m = 0$, $p_{m} = 1$, and $p_i = 0$ for $i < m$), and that $\overline{R}^{s+1}_{0} = \mathbb{E}[f(\tilde{x}^s)]$ (since $x^{s+1}_0 = \tilde{x}^s$, as $p_{m} = 1$ and $p_i = 0$ for $i < m$). Now sum over all epochs and using the fact that $\tilde{x}^0 = x^0$, we get the desired result.
\end{proof}

We now present the proof of Theorem~\ref{thm:nonconvex-minibatch} using the above results.
\begin{theorem*}
 Let $\overline{\gamma}_n$ denote the following quantity:
  \begin{align*}
    \overline{\gamma}_n := \min_{0 \leq t \leq m-1}\quad\bigl(\eta - \tfrac{\overline{c}_{t+1}\eta}{\beta} - \eta^2L - 2\overline{c}_{t+1}\eta^2\bigr).
  \end{align*}
  where $\overline{c}_m = 0$, $\overline{c}_{t} = \overline{c}_{t+1}(1 + \eta\beta + \nicefrac{2\eta^2L^2}{b} ) +  \nicefrac{\eta_t^2L^3}{b}$ for \fromto{0}{t}{m-1}. Suppose $\eta = \mu_2b/(Ln^{2/3})$ ($0 < \mu_2 < 1$), $\beta = L/n^{1/3}$, $m = \lfloor n/(3b\mu_2) \rfloor$ and $T$ is some multiple of $m$. Then for the mini-batch version of Algorithm~\ref{alg:svrg} with mini-batch size $b < n^{2/3}$, there exists universal constants $\mu_2, \nu_2 > 0$ such that we have the following: $\overline{\gamma}_n \geq \frac{\nu_2b}{Ln^{2/3}}$ and
\begin{align*}
\mathbb{E}[\|\nabla f(x_a)\|^2] &\leq \frac{Ln^{2/3} [f(x^{0}) - f(x^*)]}{bT\nu_2},
\end{align*} 
where $x^*$ is optimal for~\eqref{eq:1}.
\end{theorem*}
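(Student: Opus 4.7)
The plan is to mirror the proof of Theorem~\ref{thm:nonconvex-gen}, but applied to the mini-batch intermediate result (Theorem~\ref{thm:minibatch-nonconvex-inter}) which is already established in the appendix via Lemma~\ref{lem:nonconvex-minibatch-svrg}. The final bound on $\mathbb{E}[\|\nabla f(x_a)\|^2]$ then follows immediately by substituting the lower bound on $\overline{\gamma}_n$ into Theorem~\ref{thm:minibatch-nonconvex-inter}. So the entire task reduces to proving the uniform lower bound $\overline{\gamma}_n \geq \nu_2 b /(L n^{2/3})$ under the specified choices of $\eta$, $\beta$, and $m$.

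First I would solve the recursion $\overline{c}_t = \overline{c}_{t+1}(1 + \eta\beta + 2\eta^2L^2/b) + \eta^2L^3/b$ with terminal condition $\overline{c}_m = 0$ in closed form, obtaining
\begin{equation*}
\overline{c}_0 \;=\; \frac{\eta^2 L^3}{b}\cdot\frac{(1+\theta)^m - 1}{\theta},\qquad \theta := \eta\beta + \frac{2\eta^2L^2}{b}.
\end{equation*}
Plugging in $\eta = \mu_2 b/(Ln^{2/3})$ and $\beta = L/n^{1/3}$, we get $\eta\beta = \mu_2 b/n$ and $2\eta^2L^2/b = 2\mu_2^2 b/n^{4/3}$, so for $0<\mu_2<1$ and $n\geq 1$ we have $\theta \leq 3\mu_2 b/n$. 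With $m = \lfloor n/(3b\mu_2)\rfloor$, the factor $(1+\theta)^m$ is bounded by $e$ (using monotonicity of $(1+1/\ell)^\ell$ exactly as in the proof of Theorem~\ref{thm:nonconvex-gen}). This gives
\begin{equation*}
\overline{c}_0 \;\leq\; \frac{\eta^2 L^3}{b}\cdot\frac{e-1}{\theta} \;\leq\; \mu_2(e-1)\cdot\frac{L}{n^{1/3}},
\end{equation*}
and consequently $\overline{c}_0/\beta \leq \mu_2(e-1)$, a constant independent of $n$ and $b$.

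Since $\overline{c}_t$ is decreasing in $t$ (as $\overline{c}_m=0$ and each recursion step only adds positive terms going backwards), $\overline{c}_{t+1}\leq \overline{c}_0$ for all $t$, and hence
\begin{equation*}
\overline{\gamma}_n \;\geq\; \eta - \frac{\overline{c}_0\,\eta}{\beta} - \eta^2 L - 2\overline{c}_0\eta^2.
\end{equation*}
Now each subtracted term is at most a constant-multiple of $\mu_2$ times $\eta$: the first from the bound $\overline{c}_0/\beta \leq \mu_2(e-1)$, the second from $\eta L = \mu_2 b/n^{2/3} \leq \mu_2$ times $\eta$ (using $b < n^{2/3}$), and the third from $\overline{c}_0 \eta \leq \mu_2^2(e-1) b/n \leq \mu_2^2(e-1)$ times $\eta$. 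Choosing $\mu_2$ small enough (e.g., $\mu_2 = 1/4$ as in Theorem~\ref{thm:nonconvex-gen}) yields $\overline{\gamma}_n \geq \nu_2 \eta = \nu_2 b/(Ln^{2/3})$ for a universal constant $\nu_2 > 0$. Substituting this into Theorem~\ref{thm:minibatch-nonconvex-inter} finishes the proof.

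The only mildly delicate point is verifying that the mini-batch size $b$ enters exactly through the $\eta L \leq \mu_2 b/n^{2/3}$ bound and through the factor of $1/b$ inside $\theta$ cancelling against the $\eta^2$ in $\overline{c}_0$: the key calculation is that $\eta^2 L^2/b = \mu_2^2 b/n^{4/3}$ still scales like $b/n^{4/3}$, so the assumption $b < n^{2/3}$ is precisely what keeps $\theta$ small enough that $(1+\theta)^m$ remains $O(1)$ when $m = \Theta(n/b)$. Everything else is bookkeeping that parallels the $b=1$ case of Theorem~\ref{thm:nonconvex-gen}.
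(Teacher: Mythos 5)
Your proposal is correct and follows essentially the same route as the paper's own proof: the paper likewise reduces the claim to lower-bounding $\overline{\gamma}_n$ via the closed-form bound $\overline{c}_0 \leq \mu_2(e-1)L n^{-1/3}$ obtained from $\overline{\theta} \leq 3\mu_2 b/n$ and $(1+\overline{\theta})^m \leq e$, uses the same three term-by-term bounds (with $b < n^{2/3}$ entering exactly where you place it), and then substitutes into the intermediate mini-batch theorem. No gaps.
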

\begin{proof}[Proof of Theorem~\ref{thm:nonconvex-minibatch}]
  We first observe that using the specified values of $\beta$ and $\eta$ we obtain
  \begin{align*}
    \overline{\theta} := \frac{2\eta^2L^2}{b} + \eta\beta = \frac{2\mu_2^2b}{n^{4/3}} + \frac{\mu_2 b}{n} \leq \frac{3\mu_2 b}{n}.
  \end{align*}
  The above inequality follows since $\mu_2 \leq 1$ and $n \geq 1$. For our analysis, we will require the following bound on $\overline{c}_{0}$:
  \begin{align}
    \label{eq:minibatch-c0-bound}
    \overline{c}_0 &= \frac{\mu_2^2b^2L}{bn^{4/3}} \frac{(1 + \overline{\theta})^m - 1}{\overline{\theta}} = \frac{\mu_2bL ((1 + \overline{\theta})^m - 1)}{2b\mu_2 + bn^{1/3}} \nonumber \\
        &\leq n^{-1/3}(\mu_2L (e - 1)),
  \end{align}
  wherein the first equality holds due to the relation $\overline{c}_{t} = \overline{c}_{t+1}(1 + \eta_t\beta_t + \tfrac{2\eta_t^2L^2}{b}) +  \tfrac{\eta_t^2L^3}{b}$, and the inequality follows upon again noting that $(1 + 1/l)^l$ is increasing for $l>0$ and  $\lim_{l \rightarrow \infty} (1 + \frac{1}{l})^l = e$. Now we can lower bound $\overline{\gamma}_n$, as
  \begin{align*}
    \overline{\gamma}_n &= \min_t \bigl(\eta - \tfrac{\overline{c}_{t+1}\eta}{\beta} - \eta^2L - 2\overline{c}_{t+1}\eta^2\bigr) \\
             &\geq \bigl(\eta - \tfrac{\overline{c}_{0}\eta}{\beta} - \eta^2L - 2\overline{c}_{0}\eta^2\bigr) \geq \frac{b\nu_2}{Ln^{2/3}},
  \end{align*}
  where $\nu_2$ is a constant independent of $n$. The first inequality holds since $\overline{c}_t$ decreases with $t$. The second one holds since (a) $\overline{c}_0/\beta$ is upper bounded by a constant independent of $n$ as $\overline{c}_0/\beta \leq \mu_2(e-1)$ (due to Equation~\eqref{eq:minibatch-c0-bound}), (b) $\eta^2L \leq \mu_2\eta$ (as $b < n^{2/3}$) and (c) $2\overline{c}_{0}\eta^2 \leq 2\mu_2^2(e-1)\eta$ (again due to Equation~\eqref{eq:minibatch-c0-bound} and the fact $b < n^{2/3}$). By choosing an appropriately small constant $\mu_2$ (independent of n), one can ensure that $\overline{\gamma}_n \geq {b\nu_2}/(Ln^{2/3})$ for some universal constant $\nu_2$. For example, choosing $\mu_2 = 1/4$, we have $\overline{\gamma}_n \geq {b\nu_2}/(Ln^{2/3})$ with $\nu_2 = 1/40$. Substituting the above lower bound in Theorem~\ref{thm:minibatch-nonconvex-inter}, we get the desired result.
\end{proof}

\section{MSVRG: Convergence Rate}

\section*{Proof of Theorem~\ref{thm:nonconvex-svrg-best}}
\begin{theorem*}
Let $f \in \Fc_n$ have $\sigma$-bounded gradients. Let $\eta_t = \eta = \max\{\nicefrac{c}{\sqrt{T}},\nicefrac{\mu_1}{(Ln^{2/3})}\}$ ($\mu_1$ is the universal constant from Corollary~\ref{cor:nonconvex}), $m = \lfloor\nicefrac{n}{(3\mu_1)}\rfloor$, and $c = \sqrt{\frac{f(x^0) - f(x^*)}{2L\sigma^2}}$. Further, let 
$T$ be a multiple of $m$, $p_{m} = 1$, and $p_{i} = 0$ for $0{\;\leq\;}i{\;<\;}m$. Then, the output $x_a$ of Algorithm~\ref{alg:svrg} satisfies
\begin{align*}
& \mathbb{E}[\|\nabla f(x_a)\|^2] \\
&\leq \bar{\nu} \min\Big\lbrace 2\sqrt{\frac{2(f(x^0) - f(x^{*}))L}{T}} \sigma, \frac{ L n^{2/3} [f(x^{0}) - f(x^*)]}{T\nu_1}\Big\rbrace,
\end{align*}
where $\bar{\nu}$ is a universal constant, $\nu_1$ is the universal constant from Corollary~\ref{cor:nonconvex} and $x^*$ is an optimal solution to~\eqref{eq:1}.
\end{theorem*}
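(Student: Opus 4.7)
The plan is to split the analysis into two cases determined by which of the two candidate step sizes realizes the outer $\max$. Since the algorithm commits to one fixed step size $\eta = \max\{c/\sqrt{T},\ \mu_1/(Ln^{2/3})\}$, exactly one of the two expressions inside the $\min$ on the right-hand side will be the smaller one in each regime, and it suffices to establish the corresponding bound (up to a universal constant) in that regime.

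\textbf{Case A:} $\eta = \mu_1/(Ln^{2/3})$, which occurs when $T$ is large enough that $\mu_1/(Ln^{2/3}) \ge c/\sqrt{T}$. In this case the step size, epoch length $m = \lfloor n/(3\mu_1)\rfloor$, and output distribution ($p_m = 1$, $p_i = 0$ otherwise) exactly match the setting of Corollary~\ref{cor:nonconvex}, which directly yields $\mathbb{E}[\|\nabla f(x_a)\|^2] \le Ln^{2/3}[f(x^0) - f(x^*)]/(T\nu_1)$. A comparison of the $1/T$ versus $1/\sqrt{T}$ scaling confirms that this SVRG bound is the smaller of the two arguments of the $\min$ in this regime.

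\textbf{Case B:} $\eta = c/\sqrt{T}$, the small-$T$ regime. Here the SVRG Lyapunov analysis of Lemma~\ref{lem:nonconvex-svrg} is not useful because the recursion for $c_t$ no longer yields a bounded $\gamma_n$, so I would instead mimic the SGD argument from Theorem~\ref{thm:sgd-conv}. By $L$-smoothness of $f$ and unbiasedness of the SVRG update $v_t^{s+1}$,
\begin{align*}
\mathbb{E}[f(x_{t+1}^{s+1})] \le \mathbb{E}[f(x_t^{s+1})] - \eta\,\mathbb{E}[\|\nabla f(x_t^{s+1})\|^2] + \tfrac{L\eta^2}{2}\,\mathbb{E}[\|v_t^{s+1}\|^2].
\end{align*}
The key observation is that $\sigma$-bounded gradients together with the triangle inequality give the deterministic bound $\|v_t^{s+1}\| \le 3\sigma$, so $\mathbb{E}[\|v_t^{s+1}\|^2] \le 9\sigma^2$. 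Summing over all $T$ inner iterations across all epochs, telescoping, and dividing by $T$ (noting that $x_a$ is uniform over the iterates) yields
\begin{align*}
\mathbb{E}[\|\nabla f(x_a)\|^2] \le \frac{f(x^0) - f(x^*)}{T\eta} + \tfrac{9L\eta\sigma^2}{2}.
\end{align*}
Substituting $\eta = c/\sqrt{T}$ with $c = \sqrt{[f(x^0) - f(x^*)]/(2L\sigma^2)}$ balances the two terms and produces a bound of the form $C\,\sigma\sqrt{L[f(x^0) - f(x^*)]/T}$ for a universal constant $C$, matching the first argument of the $\min$ up to an absolute factor that can be absorbed into $\bar{\nu}$.

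The main obstacle is recognizing that the fine-grained Lyapunov argument underlying Theorem~\ref{thm:nonconvex-inter} breaks down for the large step size $c/\sqrt{T}$, forcing a fallback to the coarser SGD-style analysis in Case B. The enabling fact that makes this fallback work is that $\sigma$-boundedness provides a uniform deterministic bound on $\|v_t^{s+1}\|$, bypassing the need for any variance-reduction argument in that regime. Finally, taking $\bar{\nu}$ to be the larger of the two absorbed constants from Cases A and B turns each per-case bound into a bound on the $\min$, completing the argument.
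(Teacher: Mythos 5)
Your proposal is correct and follows essentially the same route as the paper: the same case split on which term realizes the max in the step size, an appeal to Corollary~\ref{cor:nonconvex} in the large-$T$ regime, an SGD-style smoothness-plus-telescoping argument with a $\sigma$-based bound on $\mathbb{E}[\|v_t^{s+1}\|^2]$ in the small-$T$ regime (the paper gets $4\sigma^2$ via a variance argument rather than your $9\sigma^2$ from the triangle inequality, but this only changes constants), followed by absorbing everything into $\bar{\nu}$. The one step you assert rather than verify is that in each regime the bound you prove is, up to a universal constant, the smaller argument of the $\min$; the paper makes this explicit by computing the ratio of the two candidate bounds and showing it is at most $2\nu_1/\mu_1$ (respectively $\mu_1/(2\nu_1)$) precisely because of the inequality defining that regime, which is the computation your choice of $\bar{\nu}$ implicitly relies on.
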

\begin{proof}
First, we observe that the step size $\eta$ is chosen to be $\max\{c/\sqrt{T},\mu_1/(Ln^{2/3})\}$ where
$$
c = \sqrt{\frac{f(x^0) - f(x^*)}{2L\sigma^2}}.
$$
Suppose $\eta = \mu_1/(Ln^{2/3})$, we obtain the convergence rate in Corollary~\ref{cor:nonconvex}. Now, lets consider the case where $\eta = c/\sqrt{T}$. In this case, we have the following bound:
\begin{align*}
&\mathbb{E}[\|v^{s+1}_t\|^2] \\
&= \mathbb{E}[\|\nabla f_{i_t}(x^{s+1}_{t}) - \nabla f_{i_t}(\tilde{x}^{s}) + \nabla f(\tilde{x}^{s}) \|^2] \\
&\leq 2\left(\mathbb{E}[\|\nabla f_{i_t}(x^{s+1}_{t})\|^2 +  \|\nabla f_{i_t}(\tilde{x}^{s}) - \nabla f(\tilde{x}^{s})\|^2]\right) \\
&\leq 2\left(\mathbb{E}[\|\nabla f_{i_t}(x^{s+1}_{t})\|^2 +  \|\nabla f_{i_t}(\tilde{x}^{s})\|^2]\right) \\
&\leq 4\sigma^2.
\end{align*}
The first inequality follows from Lemma~\ref{lem:r-lemma} with $r=2$. The second inequality follows from (a) $\sigma$-bounded gradient property of $f$ and (b) the fact that for a random variable $\zeta$, $\mathbb{E}[\|\zeta - \mathbb{E}[\zeta]\|^2] \leq \mathbb{E}[\|\zeta\|^2]$.
The rest of the proof is along exactly the lines as in Theorem~\ref{thm:sgd-conv}. This provides a convergence rate similar to Theorem~\ref{thm:sgd-conv}. More specifically, using step size $c/\sqrt{T}$, we get
\begin{align}
\label{eq:svrg-best-eq1}
\mathbb{E}[\|f(x_a)\|^2] \leq 2\sqrt{\frac{2(f(x^0) - f(x^{*}))L}{T}} \sigma.
\end{align}
The only thing that remains to be proved is that with the step size choice of $\max\{c/\sqrt{T},\mu_1/(Ln^{2/3})\}$, the minimum of two bounds hold. Consider the case $c/\sqrt{T} > \mu_1/(Ln^{2/3})$. In this case, we have the following:
\begin{align*}
\frac{2\sqrt{\frac{2(f(x^0) - f(x^{*}))L}{T}} \sigma}{\frac{Ln^{2/3} [f(x^{0}) - f(x^*)]}{T\nu_1}} &= \frac{2\nu_1\sigma\sqrt{2LT}}{Ln^{2/3}\sqrt{f(x^0) - f(x^{*})}} \\
&\leq 2\nu_1/\mu_1 \leq \bar{\nu} := \max\left\lbrace \frac{2\nu_1}{\mu_1}, \frac{\mu_1}{2\nu_1} \right\rbrace,
\end{align*}
where $\nu_1$ is the constant in Corollary~\ref{cor:nonconvex}. This inequality holds since $c/\sqrt{T} > \mu_1/(Ln^{2/3})$. Rearranging the above inequality, we have
$$
2\sqrt{\frac{2(f(x^0) - f(x^{*}))L}{T}} \sigma \leq \frac{\bar{\nu}Ln^{2/3} [f(x^{0}) - f(x^*)]}{T}
$$
in this case. Note that the left hand side of the above inequality is precisely the bound obtained by using step size $c/\sqrt{T}$ (see Equation~\eqref{eq:svrg-best-eq1}). Similarly, when $c/\sqrt{T} \leq \mu_1/(Ln^{2/3})$, the inequality holds in the other direction. Using these two observations, we have the desired result.
\end{proof}

\section{Key Lemmatta}
\label{sec:key-lemmas}
\begin{lemma}
\label{lem:nonconvex-variance-lemma}
  For the intermediate iterates $v^{s+1}_t$ computed by Algorithm~\ref{alg:svrg}, we have the following:
  \begin{align*}
    \mathbb{E}[\|v^{s+1}_t\|^2] \leq 2\mathbb{E}[\|\nabla f(x^{s+1}_{t})\|^2] + 2L^2 \mathbb{E}[\|x^{s+1}_{t} - \tilde{x}^{s}\|^2].
\end{align*}
\end{lemma}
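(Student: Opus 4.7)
The plan is to bound $\|v_t^{s+1}\|^2$ by splitting it into a ``full gradient'' piece and a ``noise'' piece, and then exploiting the unbiasedness of the \svrg update together with Lipschitz smoothness of the $f_i$.

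More concretely, recall from line~\ref{alg1:ln:update} of Algorithm~\ref{alg:svrg} that
\[
v_t^{s+1} = \nabla f_{i_t}(x^{s+1}_t) - \nabla f_{i_t}(\tilde{x}^s) + g^{s+1}, \qquad g^{s+1} = \nabla f(\tilde{x}^s).
\]
I would add and subtract $\nabla f(x_t^{s+1})$ to write $v_t^{s+1} = \nabla f(x_t^{s+1}) + \zeta_t$, where
\[
\zeta_t := \bigl[\nabla f_{i_t}(x^{s+1}_t) - \nabla f_{i_t}(\tilde{x}^s)\bigr] - \bigl[\nabla f(x^{s+1}_t) - \nabla f(\tilde{x}^s)\bigr].
\]
Applying the elementary inequality $\|a+b\|^2 \le 2\|a\|^2 + 2\|b\|^2$ then yields
\[
\|v_t^{s+1}\|^2 \le 2\|\nabla f(x_t^{s+1})\|^2 + 2\|\zeta_t\|^2,
\]
so only the $\zeta_t$ term needs work.

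The key observation is that $\zeta_t$ is exactly a centered random variable: taking $\mathbb{E}_{i_t}[\cdot]$ (conditioned on the past) one has $\mathbb{E}_{i_t}[\nabla f_{i_t}(x^{s+1}_t) - \nabla f_{i_t}(\tilde{x}^s)] = \nabla f(x^{s+1}_t) - \nabla f(\tilde{x}^s)$, since $i_t$ is uniform on $[n]$. Using the standard bound $\mathbb{E}\|X - \mathbb{E}X\|^2 \le \mathbb{E}\|X\|^2$ with $X = \nabla f_{i_t}(x^{s+1}_t) - \nabla f_{i_t}(\tilde{x}^s)$, I get
\[
\mathbb{E}_{i_t}\|\zeta_t\|^2 \le \mathbb{E}_{i_t}\bigl\|\nabla f_{i_t}(x^{s+1}_t) - \nabla f_{i_t}(\tilde{x}^s)\bigr\|^2.
\]
Finally, the $L$-smoothness assumption on each individual $f_i$ gives $\|\nabla f_{i_t}(x^{s+1}_t) - \nabla f_{i_t}(\tilde{x}^s)\|^2 \le L^2\|x^{s+1}_t - \tilde{x}^s\|^2$ pointwise, which remains valid after taking $\mathbb{E}_{i_t}$ and then total expectation. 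Combining the pieces and taking total expectation gives the claimed bound.

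There is no real obstacle here; the only thing one has to be careful about is to reduce to the variance (second central moment) of $\nabla f_{i_t}(x^{s+1}_t) - \nabla f_{i_t}(\tilde{x}^s)$ rather than to its raw second moment, because the naive bound $\|a+b+c\|^2 \le 3(\|a\|^2+\|b\|^2+\|c\|^2)$ would produce worse constants and in particular would not yield the clean factor $2L^2$ on the second term. Using the ``variance $\le$ second moment'' step is what makes the full gradient $\nabla f(\tilde{x}^s)$ in $g^{s+1}$ play exactly the role of the control variate that removes its mean, which is the whole point of the \svrg construction.
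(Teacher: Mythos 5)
Your proof is correct and follows essentially the same route as the paper's: the paper proves the mini-batch version (Lemma~\ref{lem:nonconvex-minibatch-variance-lemma}) by exactly this decomposition---adding and subtracting $\nabla f(x_t^{s+1})$, applying $\|a+b\|^2 \le 2\|a\|^2+2\|b\|^2$, using $\mathbb{E}\|X-\mathbb{E}X\|^2 \le \mathbb{E}\|X\|^2$ on the centered term, and finishing with $L$-smoothness of the individual $f_i$---and then obtains this lemma as the special case $b=1$. Your remark about why one must pass through the second \emph{central} moment to get the clean factor $2L^2$ is exactly the right point.
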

\begin{proof}
The proof simply follows from the proof of Lemma~\ref{lem:nonconvex-minibatch-variance-lemma} with $I_t = \{i_t\}$.
\end{proof}

We now present a result to bound the variance of mini-batch $\svrg$. 
\begin{lemma}
\label{lem:nonconvex-minibatch-variance-lemma}
  Let $u^{s+1}_t$ be computed by the mini-batch version of Algorithm~\ref{alg:svrg} i.e., Algorithm~\ref{alg:minibatch-svrg} with mini-batch size $b$. Then,
  \begin{align*}
    \mathbb{E}[\|u^{s+1}_t\|^2] \leq 2\mathbb{E}[\|\nabla f(x^{s+1}_{t})\|^2] + \tfrac{2L^2}{b} \mathbb{E}[\|x^{s+1}_{t} - \tilde{x}^{s}\|^2].
\end{align*}
\end{lemma}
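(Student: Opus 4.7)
The plan is to decompose $u^{s+1}_t$ into its mean plus a zero-mean fluctuation, and then bound the two pieces via the elementary inequality $\|a+b\|^2 \leq 2\|a\|^2 + 2\|b\|^2$. Write $a_{i} := \nabla f_i(x^{s+1}_t) - \nabla f_i(\tilde{x}^s)$. Since the mini-batch $I_t$ is sampled uniformly (with replacement) from $[n]$, we have $\mathbb{E}_{i \in I_t}[a_i] = \nabla f(x^{s+1}_t) - \nabla f(\tilde{x}^s)$, so conditioning on $x^{s+1}_t$ and $\tilde{x}^s$, the definition of $u^{s+1}_t$ rearranges as
\begin{equation*}
u^{s+1}_t \;=\; \nabla f(x^{s+1}_t) \;+\; \frac{1}{b}\sum_{i_t \in I_t}\bigl(a_{i_t} - \mathbb{E}[a_{i_t}]\bigr).
\end{equation*}

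Applying $\|a+b\|^2 \leq 2\|a\|^2 + 2\|b\|^2$ and taking expectation gives
\begin{equation*}
\mathbb{E}\bigl[\|u^{s+1}_t\|^2\bigr] \;\leq\; 2\,\mathbb{E}[\|\nabla f(x^{s+1}_t)\|^2] \;+\; 2\,\mathbb{E}\Bigl[\bigl\|\tfrac{1}{b}\nlsum_{i_t \in I_t}\bigl(a_{i_t} - \mathbb{E}[a_{i_t}]\bigr)\bigr\|^2\Bigr].
\end{equation*}
For the second term, since sampling is with replacement the summands are i.i.d.\ and mean zero, so the cross terms vanish and the variance of the average is $1/b$ times the variance of a single term:
\begin{equation*}
\mathbb{E}\Bigl[\bigl\|\tfrac{1}{b}\nlsum_{i_t \in I_t}(a_{i_t} - \mathbb{E}[a_{i_t}])\bigr\|^2\Bigr] \;=\; \tfrac{1}{b}\,\mathbb{E}\bigl[\|a_{i_t} - \mathbb{E}[a_{i_t}]\|^2\bigr] \;\leq\; \tfrac{1}{b}\,\mathbb{E}\bigl[\|a_{i_t}\|^2\bigr],
\end{equation*}
where the last step uses $\mathbb{E}[\|Z - \mathbb{E}Z\|^2] \leq \mathbb{E}[\|Z\|^2]$.

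Finally, by $L$-smoothness of each $f_i$, we have $\|a_i\|^2 \leq L^2\|x^{s+1}_t - \tilde{x}^s\|^2$ for every $i$, and hence $\mathbb{E}_{i_t}[\|a_{i_t}\|^2] \leq L^2\|x^{s+1}_t - \tilde{x}^s\|^2$. Combining the displays yields the claimed bound. The only subtle point is the cancellation of cross terms, which requires that the mini-batch be sampled with replacement (as stated in Algorithm~\ref{alg:minibatch-svrg}); otherwise one would have to track a finite-population correction factor. Setting $b=1$ recovers Lemma~\ref{lem:nonconvex-variance-lemma} as a special case.
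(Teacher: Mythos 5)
Your proof is correct and follows essentially the same route as the paper's: the same decomposition of $u^{s+1}_t$ into $\nabla f(x^{s+1}_t)$ plus the zero-mean fluctuation of the mini-batch average, the factor-of-two inequality, the $1/b$ variance reduction from i.i.d.\ with-replacement sampling, the bound $\mathbb{E}[\|Z-\mathbb{E}Z\|^2]\leq\mathbb{E}[\|Z\|^2]$, and $L$-smoothness. Your explicit remarks on conditioning and on the with-replacement requirement are welcome clarifications of points the paper leaves implicit.
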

\begin{proof}
For the ease of exposition, we use the following notation:
$$
\zeta_{t}^{s+1} = \frac{1}{|I_t|} \sum_{i_t \in I_t} \left(\nabla f_{i_t}(x^{s+1}_{t}) - \nabla f_{i_t}(\tilde{x}^{s})\right).
$$
We use the definition of $u^{s+1}_t$ to get
\begin{align*}
&\mathbb{E}[\|u^{s+1}_t\|^2] = \mathbb{E}[\|\zeta_{t}^{s+1} + \nabla f(\tilde{x}^{s}) \|^2] \\
&= \mathbb{E}[\| \zeta_{t}^{s+1} + \nabla f(\tilde{x}^{s}) - \nabla f(x^{s+1}_{t}) +  \nabla f(x^{s+1}_{t}) \|^2]\\
&\leq 2\mathbb{E}[\|\nabla f(x^{s+1}_{t})\|^2] + 2 \mathbb{E}[\|\zeta_{t}^{s+1} - \mathbb{E}[\zeta_{t}^{s+1}]\|^2] \\
&= 2\mathbb{E}[\|\nabla f(x^{s+1}_{t})\|^2] + \frac{2}{b^2} \mathbb{E}\left[\left\|\sum_{i_t \in I_t} \left(\nabla f_{i_t}(x^{s+1}_{t}) - \nabla f_{i_t}(\tilde{x}^{s}) - \mathbb{E}[\zeta_{t}^{s+1}] \right)  \right\|^2 \right]
\end{align*}
The first inequality follows from Lemma~\ref{lem:r-lemma} (with $r = 2$) and the fact that $ \mathbb{E}[\zeta_{t}^{s+1}] = \nabla f(x^{s+1}_{t}) - \nabla f(\tilde{x}^{s})$. From the above inequality, we get
\begin{align*}
&\mathbb{E}[\|u^{s+1}_t\|^2] \\
& \leq 2\mathbb{E}[\|\nabla f(x^{s+1}_{t})\|^2] + \frac{2}{b} \mathbb{E}[\|\nabla f_{i_t}(x^{s+1}_{t}) - \nabla f_{i_t}(\tilde{x}^{s})\|^2] \\
& \leq 2\mathbb{E}[\|\nabla f(x^{s+1}_{t})\|^2] + \frac{2L^2}{b} \mathbb{E}[\|x^{s+1}_{t} - \tilde{x}^{s}\|^2]
\end{align*}
The first inequality follows from the fact that the indices $i_t$ are drawn uniformly randomly and independently from $\{1, \dots, n\}$  and noting that for a random variable $\zeta$, $\mathbb{E}[\|\zeta - \mathbb{E}[\zeta]\|^2] \leq \mathbb{E}[\|\zeta\|^2]$. The last inequality follows from $L$-smoothness of $f_{i_t}$.
\end{proof}

\section{Experiments}
\label{sec:remaining-expts}

\begin{figure*}[!t]
\centering
   \begin{minipage}[b]{.3\textwidth}
   \includegraphics[width=\textwidth]{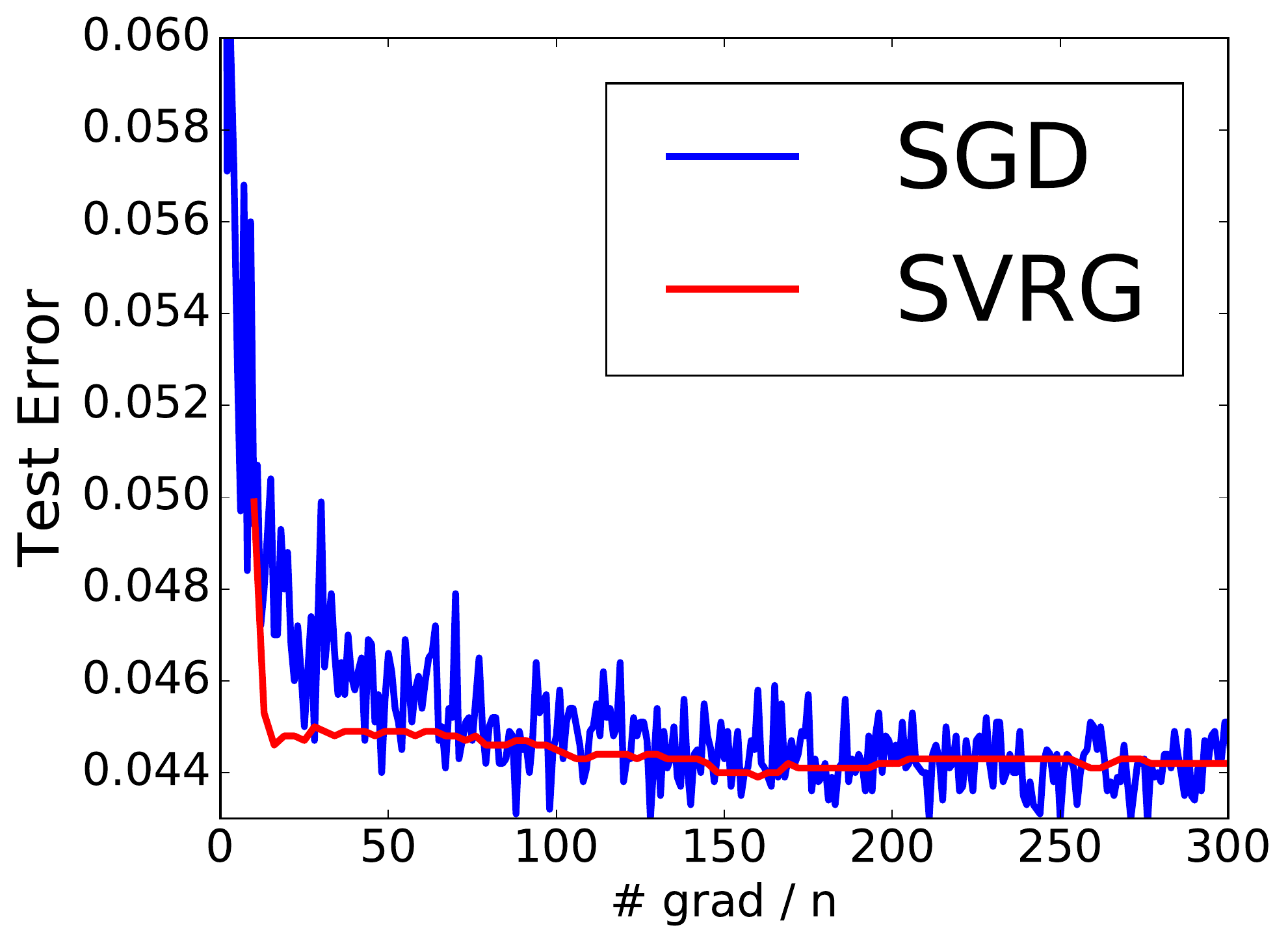}
   \end{minipage} %
   \begin{minipage}[b]{.3\textwidth}
   \includegraphics[width=\textwidth]{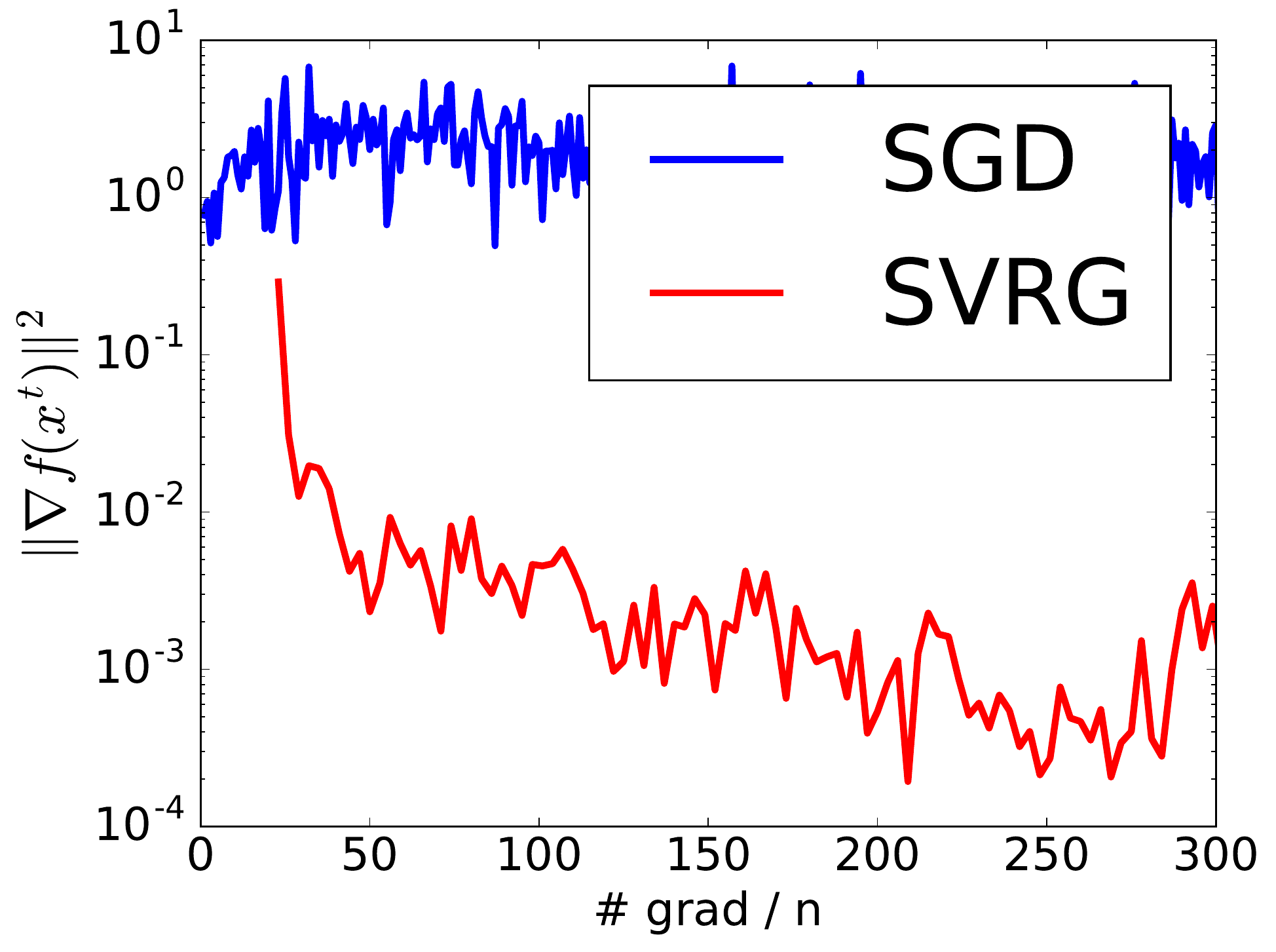}
   \end{minipage}
   \begin{minipage}[b]{.3\textwidth}
   \includegraphics[width=\textwidth]{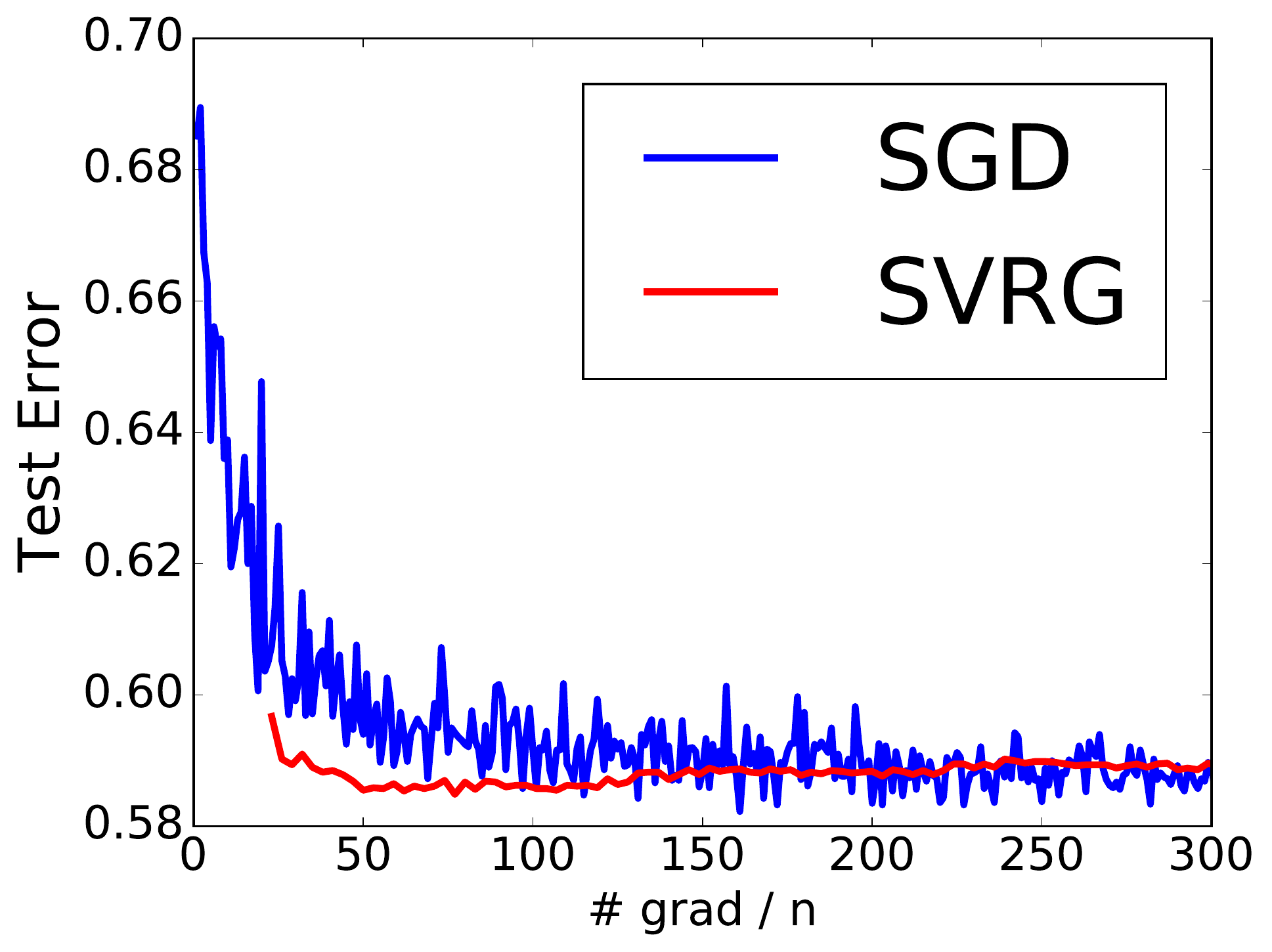}
   \end{minipage}
	\caption{Neural network results for MNIST and STL-10. The leftmost result is for MNIST. The remaining two plots are of STL-10.}
	\label{fig:results2}
\end{figure*}

Figure~\ref{fig:results2} shows the remaining plots for MNIST and STL-10 datasets. As seen in the plots, there is no significant difference in the test error of $\svrg$ and $\sgd$ for these datasets.

\section{Other Lemmas}
We need Lemma~\ref{lem:grad-lemma} for our results in the convex case.

\begin{lemma}[\protect{\citet{Johnson13}}]
\label{lem:grad-lemma}
Let $g:\mathbb{R}^d \rightarrow \mathbb{R}$ be convex with $L$-Lipschitz continuous gradient. Then,
\begin{align*}
  \|\nabla g(x) - \nabla g(y)\|^2 \leq 2L[g(x) - g(y) - \langle \nabla g(y), x - y \rangle],
\end{align*}
for all $x, y \in \mathbb{R}^d$.
\end{lemma}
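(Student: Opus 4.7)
The plan is to prove the stated co-coercivity inequality by the standard auxiliary-function trick: shift $g$ by a linear function so that $y$ becomes a global minimizer, then apply the quadratic upper bound implied by $L$-smoothness at the appropriate test point.

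Concretely, fix $y$ and define $h(z) := g(z) - \langle \nabla g(y), z - y \rangle$. Adding a linear term preserves both convexity and $L$-Lipschitz continuity of the gradient, so $h$ is convex and $L$-smooth with $\nabla h(z) = \nabla g(z) - \nabla g(y)$. In particular $\nabla h(y) = 0$, and since $h$ is convex this stationary point is a \emph{global} minimizer, i.e.\ $h(y) \leq h(u)$ for all $u \in \reals^d$.

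Next I would invoke the descent lemma, a routine consequence of $L$-smoothness: for any $u, v$,
\[
h(u) \leq h(v) + \langle \nabla h(v), u - v \rangle + \tfrac{L}{2}\|u - v\|^2.
\]
Setting $v = x$ and optimizing the right-hand side over $u$ (the minimizer is $u = x - \tfrac{1}{L}\nabla h(x)$) yields
\[
h(y) \;\leq\; \min_u h(u) \;\leq\; h(x) - \tfrac{1}{2L}\|\nabla h(x)\|^2.
\]
Unpacking the definitions of $h$ and $\nabla h$ gives
\[
g(y) \;\leq\; g(x) - \langle \nabla g(y), x - y \rangle - \tfrac{1}{2L}\|\nabla g(x) - \nabla g(y)\|^2,
\]
and rearranging produces exactly the claimed inequality.

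There is essentially no hard step: the only subtlety is the appeal to convexity to upgrade ``$y$ is stationary for $h$'' to ``$y$ globally minimizes $h$,'' which is what lets the descent lemma be applied in the direction that yields co-coercivity rather than just smoothness. Everything else is algebraic manipulation.
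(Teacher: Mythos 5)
Your proposal is correct and is essentially the paper's own argument: both shift $g$ by its linearization at $y$ to obtain a convex, $L$-smooth function $h$ with $\nabla h(y)=0$, use convexity to conclude $h(y)\leq h(u)$ for all $u$, and then apply the quadratic upper bound at the point $x-\tfrac{1}{L}\nabla h(x)$ to get $h(y)\leq h(x)-\tfrac{1}{2L}\|\nabla h(x)\|^2$. The only differences are cosmetic (your $h$ omits the additive constant $-g(y)$, and you minimize over all $u$ rather than along the ray $x-\rho\nabla h(x)$, which yields the same bound).
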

\begin{proof}
Consider $h(x) := g(x) - g(y) - \langle \nabla g(y), x - y \rangle$ for arbitrary $y \in \mathbb{R}^d$. Observe that $\nabla h$ is also $L$-Lipschitz continuous. Note that $h(x) \geq 0$ (since  $h(y) = 0$ and  $\nabla h(y) = 0$, or alternatively since $h$ defines a Bregman divergence), from which it follows that
\begin{align*}
  0 &\leq \min_{\rho} [h(x - \rho \nabla h(x))] \\
    &\leq \min_{\rho} [h(x) - \rho \|\nabla h(x)\|^2 + \tfrac{L\rho^2}{2}\|\nabla h(x)\|^2] \\
    &= h(x) - \tfrac{1}{2L}\|\nabla h(x)\|^2.
\end{align*}
Rewriting in terms of $g$ we obtain the required result.
\end{proof}

Lemma~\ref{lem:var-lemma} bounds the variance of $\svrg$ for the convex case. Please refer to \citep{Johnson13} for more details.

\begin{lemma}[\protect{\citep{Johnson13}}]
\label{lem:var-lemma}
Suppose $f_i$ is convex for all $i \in [n]$. For the updates in Algorithm~\ref{alg:svrg} we have the following inequality:
\begin{align*}
\mathbb{E}[\|v^{s+1}_t\|^2] \leq 4L [f(x^{s+1}_t) - f(x^*) + f(\tilde{x}^{s} - f(x^*)].
\end{align*}
\end{lemma}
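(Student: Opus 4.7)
The plan is to adapt the standard Johnson--Zhang variance-reduction argument, which splits $v_t^{s+1}$ into one ``uncentered'' part involving the current iterate and one ``centered'' part involving the snapshot. Concretely, I would write
\begin{align*}
v_t^{s+1} &= \bigl(\nabla f_{i_t}(x_t^{s+1}) - \nabla f_{i_t}(x^*)\bigr) \\
&\quad - \bigl(\nabla f_{i_t}(\tilde x^{s}) - \nabla f_{i_t}(x^*) - \nabla f(\tilde x^{s})\bigr),
\end{align*}
using that $\nabla f(x^*)=0$ so that adding and subtracting $\nabla f_{i_t}(x^*)$ changes nothing in expectation. Then I would apply the elementary inequality $\|a+b\|^2\le 2\|a\|^2+2\|b\|^2$ and analyze the two resulting terms separately.

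For the first term, I would invoke Lemma~\ref{lem:grad-lemma} with $g=f_{i_t}$ (convex, $L$-smooth) at the pair $(x_t^{s+1},x^*)$; taking expectation over the uniformly random index $i_t$ gives
\begin{align*}
\mathbb{E}\|\nabla f_{i_t}(x_t^{s+1}) - \nabla f_{i_t}(x^*)\|^2
\le 2L\bigl[f(x_t^{s+1}) - f(x^*)\bigr],
\end{align*}
where the linear term $\langle\nabla f(x^*),x_t^{s+1}-x^*\rangle$ vanishes since $\nabla f(x^*)=0$. For the second term, observe that the quantity inside is exactly $Z-\mathbb{E}_{i_t}[Z]$ for $Z=\nabla f_{i_t}(\tilde x^{s}) - \nabla f_{i_t}(x^*)$, because $\mathbb{E}_{i_t}[Z]=\nabla f(\tilde x^{s})-\nabla f(x^*)=\nabla f(\tilde x^{s})$. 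Using the standard $\mathbb{E}\|Z-\mathbb{E}Z\|^2\le\mathbb{E}\|Z\|^2$ and then Lemma~\ref{lem:grad-lemma} again produces the bound $2L[f(\tilde x^{s})-f(x^*)]$.

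Putting the two halves together, the factor of $2$ from $\|a+b\|^2\le 2\|a\|^2+2\|b\|^2$ combines with the $2L$'s to yield exactly $4L\bigl[f(x_t^{s+1})-f(x^*)+f(\tilde x^{s})-f(x^*)\bigr]$. The main obstacle, if any, is choosing the splitting so that one half is genuinely centered; once $x^*$ is inserted as a reference point this falls out immediately, and the rest is a mechanical application of Lemma~\ref{lem:grad-lemma} to each convex component $f_i$ together with first-order optimality at $x^*$.
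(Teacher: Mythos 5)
Your proposal is correct and follows essentially the same route as the paper's proof: the same decomposition of $v_t^{s+1}$ around the reference point $x^*$, the inequality $\|a+b\|^2\le 2\|a\|^2+2\|b\|^2$, the variance bound $\mathbb{E}\|Z-\mathbb{E}Z\|^2\le\mathbb{E}\|Z\|^2$ applied to $Z=\nabla f_{i_t}(\tilde x^s)-\nabla f_{i_t}(x^*)$, and two applications of Lemma~\ref{lem:grad-lemma}. You are in fact slightly more careful than the paper in spelling out that the linear term vanishes by first-order optimality at $x^*$ after taking expectation over $i_t$.
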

\begin{proof}
The proof follows upon observing the following:
\begin{align*}
&\mathbb{E}[\|v^{s+1}_t\|^2 = \mathbb{E}[\|\nabla f_{i_t}(x^{s+1}_{t}) - \nabla f_{i_t}(x^{s+1}_{0}) + \nabla f(\tilde{x}^{s}) \|^2] \\
&\leq 2 \mathbb{E}[\|\nabla f_{i_t}(x^{s+1}_t) - \nabla f_{i_t}(x^*)\|^2] \\
& \ \ \ + 2 \mathbb{E}[\|\nabla f_{i_t}(\tilde{x}^{s}) - \nabla f_{i_t}(x^*) - (\nabla f(\tilde{x}^{s}) - \nabla f(x^*))\|^2] \\
&\leq 2 \mathbb{E}[\|\nabla f_{i_t}(x^{s+1}_t) - \nabla f_{i_t}(x^*)\|^2] \\
& \qquad \qquad + 2 \mathbb{E}[\|\nabla f_{i_t}(\tilde{x}^{s}) - \nabla f_{i_t}(x^*)\|^2] \\
& \leq 4L [f(x^{s+1}_t - f(x^*) + f(\tilde{x}^{s}) - f(x^*)].
\end{align*}
The first inequality follows from Cauchy-Schwarz and Young inequality; the second one from  $\mathbb{E}[\|\xi - \mathbb{E}[\xi]\|^2] \leq \mathbb{E}[\|\xi\|^2]$, and the third one from Lemma~\ref{lem:grad-lemma}.
\end{proof}

\begin{lemma}
\label{lem:r-lemma}
For random variables $z_1, \dots, z_r$, we have
\begin{align*}
\mathbb{E}\left[ \|z_1 + ... + z_r\|^2 \right] \leq r \mathbb{E}\left[\|z_1\|^2 + ... + \|z_r\|^2\right].
\end{align*}
\end{lemma}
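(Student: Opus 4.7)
The statement is the standard inequality $\|\sum_{i=1}^r z_i\|^2 \le r \sum_{i=1}^r \|z_i\|^2$, applied in expectation. My plan is to establish the pointwise (deterministic) version and then take expectations, using linearity.

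For the deterministic step, I see two clean routes. The first is via the triangle inequality followed by the Cauchy--Schwarz / power-mean inequality: $\|\sum_i z_i\| \le \sum_i \|z_i\|$, and then $\bigl(\sum_i \|z_i\|\bigr)^2 \le r \sum_i \|z_i\|^2$ (treating this as $\langle \mathbf{1}, (\|z_1\|,\dots,\|z_r\|)\rangle^2 \le \|\mathbf{1}\|^2 \sum_i \|z_i\|^2$). Squaring the first inequality and chaining yields the bound. The second route is to expand directly:
\begin{align*}
\Bigl\|\sum_{i=1}^r z_i\Bigr\|^2 = \sum_{i=1}^r \|z_i\|^2 + \sum_{i\neq j} \langle z_i, z_j\rangle,
\end{align*}
then apply $2\langle z_i, z_j\rangle \le \|z_i\|^2 + \|z_j\|^2$ (from $\|z_i - z_j\|^2 \ge 0$) to each of the $r(r-1)$ cross terms. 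This produces an extra contribution of $(r-1)\sum_i \|z_i\|^2$, giving the total bound $r\sum_i \|z_i\|^2$. I would likely present the first route since it is shorter.

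Once the deterministic inequality is in hand, I take expectations of both sides and use linearity of $\mathbb{E}$ together with the fact that $\mathbb{E}[\|z_i\|^2]$ is well-defined for each $i$ (implicitly assumed, otherwise the right-hand side is infinite and the statement is vacuous). No independence or any other probabilistic assumption on the $z_i$ is needed, which matches the generality used in the paper's invocations of this lemma (e.g., in bounding $\mathbb{E}\|v_t^{s+1}\|^2$ and $\mathbb{E}\|u_t^{s+1}\|^2$).

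There is no real obstacle here: the lemma is elementary and the only care required is to avoid accidentally assuming independence (which would give the tighter variance-style bound with no factor of $r$ on the sum). The proof is essentially a two-line argument; the main point is simply that the factor $r$ cannot be removed in general, which is why the crude bound is what gets invoked throughout the analysis.
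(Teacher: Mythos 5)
Your proof is correct; in fact the paper states this lemma without any proof at all, treating it as a standard fact, so there is nothing to compare against. Either of your two routes — squaring the triangle inequality and applying Cauchy--Schwarz to $\langle \mathbf{1}, (\|z_1\|,\dots,\|z_r\|)\rangle$, or expanding the square and bounding each cross term by $2\langle z_i,z_j\rangle \le \|z_i\|^2+\|z_j\|^2$ — gives the pointwise inequality, and taking expectations by linearity finishes the argument. Your observation that no independence is needed (and that the factor $r$ is what distinguishes this crude bound from the variance-style bound for independent mean-zero terms) is exactly the right point to flag, since the paper invokes the lemma on dependent quantities such as $\nabla f_{i_t}(x_t^{s+1})$ and $\nabla f_{i_t}(\tilde{x}^s)$.
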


\end{document}